\documentclass[11pt, a4paper, final]{amsart}
\setlength{\emergencystretch}{2em}

\usepackage[T1]{fontenc}
\usepackage{lmodern}
\usepackage{mathtools}
\usepackage[utf8]{inputenc}

\topmargin0pt
\oddsidemargin0pt
\evensidemargin0pt
\textheight660pt
\textwidth445pt

\usepackage{amsfonts}
\usepackage{amsmath}
\usepackage{amssymb}
\usepackage{amsthm}
\usepackage{mathtools}
\usepackage{paralist, tabularx}
\usepackage{enumitem}
\usepackage[utf8]{inputenc}
\usepackage{mathabx,epsfig}
\usepackage{tikz-cd}
\usepackage{verbatim}

\usepackage{etoolbox}

\usepackage{xcolor}
\definecolor{green}{RGB}{0,127,0}
\definecolor{redd}{RGB}{191,0,0}
\definecolor{red}{RGB}{105,89,205}
\usepackage[colorlinks=true]{hyperref}

\usepackage[notref, notcite]{showkeys}
\usepackage[cmtip,arrow]{xy}

\DeclareMathOperator{\Aut}{Aut}

\DeclareMathOperator{\Stab}{Stab}
\DeclareMathOperator{\st}{st}

\newcommand{\R}{{\mathbb{R}}}

\newcommand{\RR}{{\bar{R}}}

\newcommand{\RRaT}{(\RR,+)^{00}}

\newcommand{\RRiD}{\RR^{0}}
\newcommand{\RRiT}{\RR^{00}}

\newcommand{\C}{\mathfrak{C}}

\newcommand{\nref}[2]{\hyperref[#1]{\ref*{#1}$_{#2}$}}

\DeclareMathOperator{\SL}{{SL}}
\DeclareMathOperator{\tp}{{tp}}
\DeclareMathOperator{\cl}{{cl}}
\DeclareMathOperator{\dcl}{{dcl}}

\newtheorem{theorem}{Theorem}
\numberwithin{theorem}{section}
\newtheorem{lemma}[theorem]{Lemma}

\newtheorem{fact}[theorem]{Fact}
\newtheorem{proposition}[theorem]{Proposition}
\newtheorem{problem}[theorem]{Problem}

\newtheorem{question}[theorem]{Question}
\newtheorem{corollary}[theorem]{Corollary}

\newtheorem{clm}{Claim}
\newtheorem*{clm*}{Claim}

\theoremstyle{definition}
\newtheorem{definition}[theorem]{Definition}

\newtheorem{example}[theorem]{Example}

\theoremstyle{remark}
\newtheorem{remark}[theorem]{Remark}

\AtEndEnvironment{proof}{\setcounter{clm}{0}}
\newenvironment{clmproof}[1][\proofname]{\proof[#1]}{\endproof}

\newcommand{\orcidlogo}{\includegraphics[height=\fontcharht\font`\B]{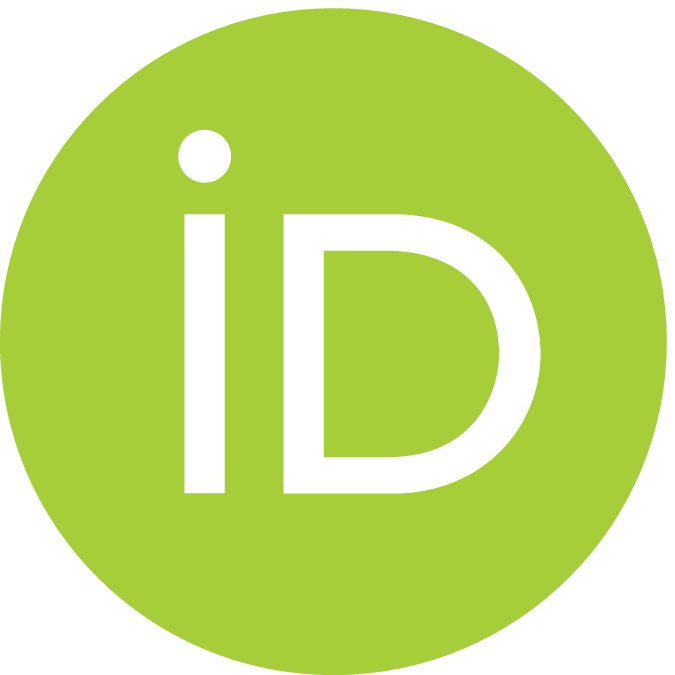}}
\newcommand{\orcid}[1]{\href{#1}{\orcidlogo #1}}

\usepackage[hyphenbreaks]{breakurl}

\usepackage{filecontents}
\begin{filecontents}{references.bib}

@article{GPP,
	title = {On compactifications and the topological dynamics of definable groups},
	volume = {165},
	issn = {0168-0072},
	url = {http://www.sciencedirect.com/science/article/pii/S016800721300119X}, 
	pages = {552--562},
	number = {2},
	journaltitle = {Ann. Pure Appl. Logic},
	author = {Gismatullin, Jakub and Penazzi, Davide and Pillay, Anand},
	urldate = {2015-08-24},
	date = {2014-02},
	keywords = {Amenability, Compactification, Definable groups, Definable types, {NIP}, Topological dynamics},
	ids = {GPP},
}

@article{GJK,
	title = {Bohr compactifications of groups and rings},
	author = {Gismatullin, Jakub and Jagiella, Grzegorz and Krupiński, Krzysztof},
	pages = {First view, 1--35},
	journaltitle = {J. Symb. Log.},
	date = {2022},
	ids = {GiJaKr},
	doi={10.1017/jsl.2022.10},
}

@article{KrRz,
	title = {Generating ideals by additive subgroups of rings},
	author = {Krupiński, Krzysztof and Rzepecki, Tomasz},
	journaltitle = {Ann. Pure Appl. Logic},
	pages = {103119},
	date = {2022},
	month     =  jul,
	volume    =  173,
  	number    =  7,
	ids = {KrRz},
}

@article{MaWa,
	author = {Massicot, Jean-Cyrille and Wagner, Frank},
	title = {Approximate subgroups},
	journal = {J. Éc. polytech. Math.},
	volume = {2},
	number = {},
	pages = {55-63},
	year = {2015}
}

@ARTICLE{Hru,
  title     = "Stable group theory and approximate subgroups",
  author    = "Hrushovski, Ehud",
  journal   = "J. Amer. Math. Soc.",
  publisher = "American Mathematical Society (AMS)",
  volume    =  25,
  number    =  1,
  pages     = "189--243",
  month     =  jan,
  year      =  2012,
}

@ARTICLE{BGT,
  title     = "The structure of approximate groups",
  author    = "Breuillard, Emmanuel and Green, Ben and Tao, Terence",
  journal   = "Publ. Math. Inst. Hautes Etudes Sci.",
  publisher = "Springer Science and Business Media LLC",
  volume    =  116,
  number    =  1,
  pages     = "115--221",
  month     =  nov,
  year      =  2012,
}

@phdthesis{Mas,
	TITLE = {{Approximate subgroups in Model theory}},
	AUTHOR = {Massicot, Jean-Cyrille},
	URL = {https://tel.archives-ouvertes.fr/tel-01912992},
	NUMBER = {2018LYSE1164},
	SCHOOL = {{Universit{\'e} de Lyon}},
	YEAR = {2018},
	MONTH = Sep,
	KEYWORDS = {Hilbert's fifth problem ; Hrushovski's stabilizer theorem ; Definably amenable subgroups ; Definability ; Approximate subgroups ; Cinqui{\`e}me probl{\`e}me de Hilbert ; Groupes d{\'e}finissablement moyennables ; Th{\'e}or{\`e}me du stabilisateur de Hrushovski ; Groupes approximatifs ; Definissabilit{\'e}},
	TYPE = {Thesis},
	PDF = {https://tel.archives-ouvertes.fr/tel-01912992/file/TH2018MASSICOTJEANCYRILLE.pdf},
	HAL_ID = {tel-01912992},
	HAL_VERSION = {v1},
}

@misc{Hru2,
Author = {Ehud Hrushovski},
Title = {Beyond the Lascar Group},
Year = {2020},
eprinttype = {arxiv},
Eprint = {2011.12009v1},
}

@article{HKP,
author = {Hrushovski, Ehud and Krupiński, Krzysztof and Pillay, Anand},
year = {2022},
month = {Feb},
title = {Amenability, connected components, and definable actions},
volume = {28, 16},
journal = "Sel. Math. New Ser.",
}

@ARTICLE{KPR,
  title     = "Topological dynamics and the complexity of strong types",
  author    = "Krupi{\'n}ski, Krzysztof and Pillay, Anand and Rzepecki, Tomasz",
  journal   = "Isr. J. Math.",
  publisher = "Springer Science and Business Media LLC",
  volume    =  228,
  number    =  2,
  pages     = "863--932",
  month     =  oct,
  year      =  2018,
}

@ARTICLE{KNS,
  title     = "Boundedness and absoluteness of some dynamical invariants in
               model theory",
  author    = "Krupi{\'n}ski, Krzysztof and Newelski, Ludomir and Simon, Pierre",
 journal   = "J. Math. Log.",
  publisher = "World Scientific Pub Co Pte Lt",
  volume    =  19,
  number    =  02,
  pages     = "1950012",
  month     =  dec,
  year      =  2019,
}

@ARTICLE{Pil,
  title     = "Type-definability, compact Lie groups, and o-minimality",
  author    = "Pillay, Anand",
  journal   = "J. Math. Log.",
  publisher = "World Scientific Pub Co Pte Lt",
  volume    =  04,
  number    =  02,
  pages     = "147--162",
  month     =  dec,
  year      =  2004,
}

@article {KrPi,
    AUTHOR = {Krupi\'{n}ski, Krzysztof and Pillay, Anand},
     TITLE = {Amenability, definable groups, and automorphism groups},
   JOURNAL = {Adv. Math.},
  FJOURNAL = {Advances in Mathematics},
    VOLUME = {345},
      YEAR = {2019},
     PAGES = {1253--1299},
      ISSN = {0001-8708},
   MRCLASS = {03C45 (43A07 54H11 54H20)},
  MRNUMBER = {3904280},
MRREVIEWER = {Adrien Deloro},
       URL = {https://doi.org/10.1016/j.aim.2019.01.033},
}

@book {Arm,
    AUTHOR = {Armacost, David L.},
     TITLE = {The structure of locally compact abelian groups},
    SERIES = {Monographs and Textbooks in Pure and Applied Mathematics},
    VOLUME = {68},
 PUBLISHER = {Marcel Dekker, Inc., New York},
      YEAR = {1981},
     PAGES = {vii+154},
      ISBN = {0-8247-1507-1},
   MRCLASS = {22B05},
  MRNUMBER = {637201},
MRREVIEWER = {M. I. Kabenyuk},
}

@online{Bre,
Author = {Breuillard, Emmanuel},
Title = {Lectures on Approximate groups},
Year = {2011},
url = {https://www.imo.universite-paris-saclay.fr/~breuilla/ClermontLectures.pdf},
}

@ARTICLE{Gis1,
  title     = "Model theoretic connected components of groups",
  author    = "Gismatullin, Jakub",
  journal   = "Isr. J. Math.",
  publisher = "Springer Science and Business Media LLC",
  volume    =  184,
  number    =  1,
  pages     = "251--274",
  month     =  aug,
  year      =  2011,
}

@misc{Gis2,
	title={Absolute connectedness and classical groups},
	author={Gismatullin, Jakub},
	eprinttype = {arxiv},
	eprint={1002.1516},
	year={2010}
}

@ARTICLE{HPP,
  title     = "Groups, measures, and the {NIP}",
  author    = "Hrushovski, Ehud and Peterzil, Ya'acov and Pillay, Anand",
  journal   = "J. Amer. Math. Soc.",
  publisher = "American Mathematical Society (AMS)",
  volume    =  21,
  number    =  02,
  pages     = "563--597",
  month     =  feb,
  year      =  2007,
}

@ARTICLE{Tao,
  title     = "Product set estimates for non-commutative groups",
  author    = "Tao, Terence",
  journal   = "Combinatorica",
  publisher = "Springer Science and Business Media LLC",
  volume    =  28,
  number    =  5,
  pages     = "547--594",
  month     =  sep,
  year      =  2008,
}

@ARTICLE{Tao2,
  title     = "The sum-product phenomenon in arbitrary rings",
  journal = "Contrib. Discrete Math.",
   volume    =  4,
  number    =  2,
  pages     = "59--82",
  author    = "Tao, Terence",
  publisher = "Faculty of Science, University of Calgary",
  year      =  2009
}

@ARTICLE{Ma,
  title     = "Closed approximate subgroups: compactness, amenability and approximate lattices",
  author    = "Simon Machado",
  year      =  2020,
eprinttype = {arxiv},
Eprint = {2011.01829},
}

\end{filecontents}

\usepackage[
backend=bibtex,
url=false,
isbn=false,
backref=true,
citestyle=alphabetic,
bibstyle=alphabetic,
autocite=inline,
maxnames=99,
minalphanames=4,
maxalphanames=4,
sorting=nyt,
]{biblatex}

\addbibresource{references.bib}

\title{Locally compact models for approximate rings}

\author{Krzysztof Krupi\'{n}ski}
\thanks{\noindent The author is supported by the Narodowe Centrum Nauki grants no. 2016/22/E/ST1/00450 and 2018/31/B/ST1/00357.}
\address{\parbox{\linewidth}{Instytut Matematyczny, Uniwersytet Wroc{\l}awski\\
pl. Grunwaldzki 2, 50-384 Wroc{\l}aw, Poland}}
\address{\orcid{https://orcid.org/0000-0002-2243-4411}}
\email{Krzysztof.Krupinski@math.uni.wroc.pl}

\keywords{Approximate ring, locally compact model, model-theoretic connected components.}
\subjclass[2020]{03C60, 03C98, 11B30, 11P70, 16B70, 20A15, 20N99}

\begin{document}
	
	\begin{abstract}
	By an approximate subring of a ring we mean an additively symmetric subset $X$ such that $X\cdot X \cup (X +X)$ is covered by finitely many additive translates of $X$.
We prove that each approximate subring $X$ of a ring has a locally compact model, i.e. a ring homomorphism $f \colon \langle X \rangle \to S$ for some locally compact ring $S$ such that $f[X]$ is relatively compact in $S$ and there is a neighborhood $U$ of $0$ in $S$ with $f^{-1}[U] \subseteq 4X + X \cdot 4X$ (where $4X:=X+X+X+X$). This $S$ is obtained as the quotient of the ring $\langle X \rangle$ interpreted in a sufficiently saturated model by its type-definable ring  connected component. The main point is to prove that this component always exists. In order to do that, we extend the basic theory of model-theoretic connected components of definable rings (developed in \cite{GJK} and \cite{KrRz}) to the case of rings generated by definable approximate subrings  and we answer a question from \cite{KrRz} in the more general context of approximate subrings. Namely, let $X$ be a definable (in a structure $M$) approximate subring of a ring and $R:=\langle X \rangle$. Let $\bar X$ be the interpretation of $X$ in a sufficiently saturated elementary extension and $\bar R := \langle \bar X \rangle$. It follows from \cite{MaWa} that there exists the smallest $M$-type-definable subgroup of $(\bar R,+)$ of bounded index, which is denoted by $(\bar R,+)^{00}_M$. We prove that  $(\bar R,+)^{00}_M + \bar R  \cdot (\bar R,+)^{00}_M$ is the smallest $M$-type-definable two-sided ideal of $\bar R$ of bounded index, which we denote by $\bar R^{00}_M$. Then $S$ in the first sentence of the abstract is just $\bar R/\bar R^{00}_M$ and $f: R \to \bar R/\bar R^{00}_M$ is the quotient map. In fact, $f$ is the universal ``definable'' (in a suitable sense)   locally compact model.

The existence of locally compact models can be seen as a general structural result about approximate subrings: every approximate subring $X$ can be recovered up to additive commensurability as the preimage by a locally compact model $f \colon \langle X \rangle \to S$ of any relatively compact neighborhood of $0$ in $S$. It should also have various applications to get more precise structural or even classification results. For example, in this paper, we deduce that every [definable] approximate subring $X$ of a ring of positive characteristic is additively commensurable with a [definable] subring contained in $4X + X \cdot 4X$. This easily implies that for any given $K,L \in \mathbb{N}$ there exists a constant $C(K,L)$ such that every $K$-approximate subring $X$ (i.e. $K$ additive translates of $X$ cover $X \cdot X \cup (X+X)$) of a ring of positive characteristic $\leq L$ is additively  $C(K,L)$-commensurable with a subring contained in $4X + X \cdot 4X$. Another application of the existence of locally compact models is a classification of finite approximate subrings of rings without zero divisors: for every $K \in \mathbb{N}$ there exists $N(K) \in \mathbb{N}$ such that for every  finite $K$-approximate subring $X$ of a ring without zero divisors either $|X| <N(K)$ or $4X + X \cdot 4X$ is a subring which is additively $K^{11}$-commensurable with $X$.

	\end{abstract}

	\maketitle

	\section{Introduction}

A subset $X$ of a group is called an {\em approximate subgroup} if it is symmetric (i.e. $e \in X$ and $X^{-1}=X$) and $X X \subseteq FX$ for some finite $F \subseteq \langle X \rangle$. Approximate subgroups were introduced by Tao in \cite{Tao} and have become one of the central objects in additive combinatorics.
A breakthrough in the study of the structure of approximate subgroups was obtained by Hrushovski in \cite{Hru}, where a locally compact model for any pseudofinite approximate subgroup (more generally, {\em near-subgroup}) $X$ was obtained by using model-theoretic tools, and in consequence also a Lie model  was found for some approximate subgroup commensurable with $X$. This paved the way for Breuillard, Green, and Tao to give a full classification of all finite approximate subgroups in \cite{BGT}.

Let $X$ be an approximate subgroup and $G := \langle X \rangle$. By a {\em locally compact [resp. Lie] model} of $X$ we mean a group homomorphism $f \colon \langle X \rangle \to H$ for some locally compact [resp. Lie] group $H$ such that $f[X]$ is relatively compact in $H$ and there is a neighborhood $U$ of the neutral element in $H$ with $f^{-1}[U] \subseteq X^m$ for some $m<\omega$.
(In this paper, locally compact spaces are Hausdorff by definition.)

By a {\em definable} (in some structure $M$)  {\em approximate subgroup} we mean an approximate subgroup $X$ of some group such that $X, X^2,X^3,\dots$ are all definable in $M$ and  $\cdot |_{X^n \times X^n} :X^n \times X^n \to X^{2n}$ is definable in $M$ as well. Naming the appropriate parameters, we will be assuming that the definable approximate subgroups are $0$-definable (i.e. without parameters).
If the approximate subgroup $X$ is definable in $M$, then in the definition of a locally compact model, one usually additionally requires {\em definability} of $f$ in the sense that for any open $U \subseteq H$ and compact $C \subseteq H$  such that $C \subseteq U$, there exists a definable (in $M$) subset $Y$ of $G$ such that $f^{-1}[C] \subseteq Y \subseteq f^{-1}[U]$. Note that in the abstract situation of an arbitrary approximate subgroup $X$, we can always equip the ambient group with the {\em full structure} (i.e. add all subsets of all finite Cartesian powers as predicates), and then $X$ becomes definable and the additional requirement of definability of locally compact models is automatically satisfied. In other words, definable approximate subgroups generalize abstract approximate subgroups.

It is folklore (see Corollary \ref{corollary: existence of locally compact models}) that for a definable (in a structure $M$) approximate subgroup $X$, the existence of a definable locally compact model is equivalent to the existence of some (equivalently, the smallest) $M$-type-definable subgroup of $\bar G:= \langle \bar X \rangle$ of bounded index, where $\bar X$ is the interpretation of $X$ in a monster model extending $M$ (see Subsection \ref{subsection: model theory}). This smallest subgroup is denoted by $\bar G^{00}_M$. By compactness, any type-definable subgroup of $\bar G$ is contained in some power $\bar X^m$; in particular, if $\bar G^{00}_M$ exists, it is necessarily contained in some $\bar X^m$.  The existence of $\bar G^{00}_M$ together with the requirement $\bar G^{00}_M \subseteq \bar X^m$ for a given $m$ is precisely equivalent to saying that there exists a sequence $(D_n)_{n < \omega}$ of definable, symmetric subsets of $X^m$ with the properties $D_{n+1}D_{n+1} \subseteq D_n$ and $D_n$ is generic (i.e. finitely many left translates of $D_n$ cover $X$) for all $n<\omega$. 
We have that if a definable locally compact model for $X$ exists (equivalently, $\bar G^{00}_M$ exists), then the quotient map $G \to \bar G/\bar G^{00}_M$ is the universal definable locally compact model (see Proposition \ref{proposition: locally compact model}).

Having a pseudofinite approximate subgroup $X$ of a group $M$, one can equip $M$ with a sufficiently rich structure (e.g. the full structure where all subsets of all finite Cartesian powers  are added as predicates). Let $G=\langle X \rangle$. Hrushovski proved  that for $\bar X$ being the interpretation of $X$ in the monster model extending $M$ and $\bar G:=\langle \bar X \rangle$, the component $\bar G^{00}_M$ exists and is contained in $\bar X^4$.  Then the quotient map $G \to \bar G/\bar G^{00}_M$ is the universal locally compact model for $X$. Next, using Yamabe's theorem, he deduced that there exists an approximate subgroup $Y$ commensurable with $X$ (i.e. finitely many left translates of $Y$ cover $X$ and vice versa) and contained in $X^4$ such that $Y$ has a Lie model. He proved a much more general result for the so-called near-subgroups (see \cite[Theorem 4.2]{Hru}). This was obtained as a consequence of a suitable ``stabilizer theorem'' in a stable context proved in \cite{Hru}. Some variants of Hrushovski's stabilizer theorem were established later in several papers by various authors. For example, Massicot and Wagner proved the existence of definable locally compact models for definably amenable approximate subgroups. More precisely, from \cite[Theorem 12]{MaWa} it follows that  if $X$ is a definable (in a structure $M$) definably amenable approximate subgroup, then in the monster model the group $\bar G :=\langle \bar X \rangle$ has the component $\bar G^{00}_M$ contained in $\bar X^4$ (see also Fact \ref{fact: main consequence of MaWa and Mas}). {\em Definable amenability} of $X$ means that there is an invariant under left translation, finitely additive measure $\mu$ on definable subsets of $\bar G: =\langle \bar X \rangle$ such that $\mu(\bar X)=1$. In particular, this applies in the case when $\bar G$ is abelian, as then $\bar G$ is an amenable group, and so $X$ is definably amenable (even amenable, e.g. see Lemma 6.1 in the first arXiv version of \cite{Hru2} or \cite[Proposition 5.8]{Ma}).

Wagner conjectured (see \cite[Conjecture 0.15]{Mas} and the paragraph after Theorem 1 in \cite{MaWa}) that a definable locally compact model always exists. This conjecture was refuted in \cite[Section 4]{HKP} (even in the abstract situation, where definability can be erased).  In \cite{Hru2}, Hrushovski proved the existence of locally compact and Lie models in a generalized sense involving quasi-homomorphisms, and used them to give complete classifications of approximate lattices in $\SL_n(\mathbb{R})$ and $\SL_n(\mathbb{Q}_p)$. This work is very advanced; among various tools, it uses a new locally compact group attached to a theory invented by Hrushovski as a counterpart of the Ellis group (or rather its canonical Hausdorff quotient) of  a first order theory which was defined and studied in \cite{KPR}, \cite{KNS}, and \cite{KrRz}.

The goal of the present paper is to see whether definable locally compact models for definable approximate rings always exist. In contrast to approximate groups, our main result yields a positive answer in full generality. We obtain it as an easy corollary of our main theorem which concerns some fundamental issues on model-theoretic connected components of approximate rings, answering in particular the main question from \cite{KrRz}, but in a more general context of approximate rings. Let us give some details.

In this paper, rings need not be commutative or unital.
There are various possible definitions of approximate subrings. One can define an approximate subring of a ring as an additively  symmetric subset $X$ such that $XX \cup (X+X) \subseteq (F \cup\{1\})X \cap (F+X)$ for some finite subset $F$ of the subring $\langle X \rangle$ generated by $X$. We will work with a more general definition saying that $XX \cup (X+X) \subseteq F+X$ for some finite $F \subseteq \langle X \rangle$; in  particular, $X$ is additively an approximate subgroup. Define recursively $X_n$, $n<\omega$, by: $X_0:=X$ and $X_{n+1}:= X_nX_n+(X_n +X_n)$. As we will see in Fact \ref{fact: by Lemma 5.5 of Bru}, if $X$ is an approximate subring, then each $X_n$ is covered by finitely many additive translates of $X$.
Important structural results (so called sum-product phenomena) on finite approximate subrings were obtained by Tao in \cite{Tao2}.

A key example of an approximate subring is an arbitrary compact neighborhood of $0$ in any locally compact ring, e.g. $X:= [-1,1]$ is an approximate subring of $\mathbb{R}$, and  $X:=\{ \sum_{i=-1}^\infty a_it^i: a_i \in \mathbb{F}_p\}$ is an approximate subring of the field of formal Laurent series $\mathbb{F}_p((t))$ over the finite field $\mathbb{F}_p$.

By a {\em definable} (in some structure $M$) {\em approximate subring} we mean an approximate subring $X$ such that $X_0,X_1,\dots$ are all definable in $M$ and $+$ and $\cdot$ restricted to any $X_n$ are also definable in $M$.

A [definable] locally compact model of a [definable] approximate subring $X$ is defined as a counterpart of a [definable] locally compact model of a [definable] approximate subgroup (see the paragraph preceding Proposition \ref{proposition: locally compact model}).  
As in the case of definable approximate subgroups, the existence of a definable locally compact model 
is equivalent to the existence of a suitable model-theoretic ring component of the ring $\bar R:= \langle \bar X \rangle$ generated by the interpretation $\bar X$ of $X$ in the monster model. Namely, we observe in Corollary \ref{corollary: existence of locally compact models} that a definable locally compact model for $X$ exists if and only if there exists some (equivalently, the smallest) $M$-type-definable two-sided ideal in $\bar R$ of bounded index. This smallest ideal is denoted by $\bar R^{00}_M$. By compactness, any type-definable subgroup of $(\bar R,+)$ is contained in some $\bar X_m$; in particular,  if $\bar R^{00}_M$ exists, it is contained in some $\bar X_m$.  Various model-theoretic connected components of definable rings were defined and studied in \cite{GJK} and \cite{KrRz}. In particular, it was shown in \cite{GJK} that in the definition of $\bar R^{00}_M$ ``two-sided ideal'' can be replaced by ``left ideal'' or ``right ideal'' or ``subring'' and in each case we get the same notion. The proofs also work for $\bar R= \langle \bar X \rangle$. By compactness, one easily shows that the existence of $\bar R^{00}_M$ together with the requirement $\bar R^{00}_M \subseteq \bar X_m$ for a given $m$ is precisely equivalent to saying that there exists a sequence $(D_n)_{n < \omega}$ of definable, additively symmetric subsets of $X_m$ with the properties $D_{n+1}D_{n+1} + (D_{n+1} + D_{n+1}) \subseteq D_n$ and $D_n$ is generic (i.e. finitely many additive translates of $D_n$ cover $X$) for all $n<\omega$. 

In \cite[Theorem 1.2]{KrRz}, it was shown that for a unital definable ring $\bar R$ we have $(\bar R,+)^{00}_M + \bar R  \cdot (\bar R,+)^{00}_M + \bar R  \cdot (\bar R,+)^{00}_M = \bar R^{00}_M$ (so we say that $(\bar R,+)^{00}_M$ generates an ideal in $2\frac{1}{2}$ steps), and for a definable ring of finite characteristic we have $(\bar R,+)^{00}_M+ \bar R  \cdot (\bar R,+)^{00}_M = \bar R^{00}_M$ (i.e.  $(\bar R,+)^{00}_M$ generates an ideal in $1\frac{1}{2}$ steps). It was left as a question (see \cite[Question 1.3]{KrRz}) if finitely many steps are enough for arbitrary definable rings (besides unital or finite characteristic rings a positive answer was also obtained for finitely generated rings, but with higher numbers of steps). It was also shown in Examples 8.1 and 8.2 of \cite{KrRz} that $1\frac{1}{2}$ steps is an optimal (i.e. cannot be decreased) bound on the number of steps needed to generate an ideal. 

In this paper, we prove that $1\frac{1}{2}$ steps is enough not only for arbitrary definable rings (answering \cite[Question 1.3]{KrRz}), but also for rings generated by definable approximate subrings, i.e. for $\bar R = \langle \bar X \rangle$ where $X$ is a definable approximate subring. Namely, in Theorem \ref{theorem: main theorem}, we show that  $(\bar R,+)^{00}_M+ \bar R  \cdot (\bar R,+)^{00}_M = \bar R^{00}_M$; in particular, $\RR^{00}_M$ exists. 
From this, we deduce in  Corollary \ref{corollary: locally compact model exists} that a definable locally compact model exists for an arbitrary definable approximate subring, and the quotient map $R \to \bar R/\bar R^{00}_M$ is the universal such model.

In fact, we show that for an arbitrary small subset $A$ of the monster model $(\bar R,+)^{00}_A+ \bar R  \cdot (\bar R,+)^{00}_A$ is an invariant over $A$ two-sided ideal of bounded index (actually the smallest one, denoted by $\bar R^{000}_A$). For a definable $\bar R$ it gives us $(\bar R,+)^{00}_A+ \bar R  \cdot (\bar R,+)^{00}_A = \bar R^{00}_A.$ In our general context of  $\bar R = \langle \bar X \rangle$ (where $X$ is a 0-definable approximate subring), we get the last equality assuming that $R \subseteq \dcl(A)$ (so e.g. for $A=R$ or $A=M$).

The existence of locally compact models for arbitrary approximate subrings can be seen as a general structural result about all approximate subrings. Namely, every approximate subring $X$ is additively commensurable with the preimage by a locally compact model $f \colon \langle X \rangle \to S$ of any relatively compact neighborhood of $0$ in $S$ (where two subsets of a ring are said to be {\em additively commensuarble} if they are commensurable as subsets of the additive group); see Fact \ref{fact: preimage of open is generic}(2). After replacing $S$ by the closure of the image of $f$, we conclude that for some approximate subring $Y \supseteq \ker(f)$ additively commensurable with $X$, $Y/\ker(f)$ becomes a dense subset of an open, relatively compact neighborhood of $0$ in $S$. As an example, one can consider the approximate subring $X:=\mathbb{Q} \cap (-1,1)$ of the field of rationals. Then the identity embedding of $\langle X \rangle=\mathbb{Q}$ to $\mathbb{R}$ is a locally compact model, and $X$ is a dense subset of $(-1,1)$.

The existence of locally compact models also leads to more precise structural or even classification results about approximate subrings.

\begin{problem}
Classify all finite (more generally also infinite) approximate subrings.
\end{problem}

In Theorem \ref{theorem: application of the main theorem}, we deduce from our main result on the existence of locally compact models and a basic structural fact on locally compact rings that every definable approximate subring $X$ of a ring of positive characteristic is additively commensurable with a definable subring contained in $4X + X \cdot 4X$. Using ultraproducts, this implies Corollary \ref{corollary: application to finite approximate subrings} which says that for any given $K,L \in \mathbb{N}$ there exists a constant $C(K,L)$ such that every $K$-approximate subring $X$ of a ring of positive characteristic $\leq L$ is additively $C(K,L)$-commensurable with a subring contained in $4X + X \cdot 4X$ (where $C(K,L)$-commensurability means that at most $C(K,L)$ translates are enough).  

In Theorem \ref{theorem: classification without zero divisors}, the existence of locally compact models and ultraproducts are used to classify finite approximate subrings of rings without zero divisors: for every $K \in \mathbb{N}$ there exists $N(K) \in \mathbb{N}$ such that for every  finite $K$-approximate subring $X$ of a ring without zero divisors either $|X| <N(K)$ or $4X + X \cdot 4X$ is a subring which is additively $K^{11}$-commensurable with $X$. This is a variant of the classification of finite approximate fields from \cite[Theorem 5.3]{Bre}. A version of Thm. \ref{theorem: classification without zero divisors} with a less precise conclusion follows from \cite[Theorem 1.4]{Tao2}. 

In the zero characteristic case, it is not true that every approximate subring is additively commensurable with a subring, and the zero characteristic case with zero divisors remains open regarding proving structural results via applications of locally compact models. 
For example, one could try to extend the context of Corollary  \ref{corollary: application to finite approximate subrings} to some other families of finite approximate subrings, or, in all characteristics, one could try to prove (or reprove) some versions of the sum-product phenomena from \cite{Tao2}. A general idea to prove an asymptotic structural result on finite approximate subrings would be to consider an ultraproduct of a sequence of rings generated by finite approximate subrings yielding  a counter-example, pass to the universal locally compact model of the ultraproduct of the finite approximate subrings in question constructed in this paper, use suitable structural theorems on locally compact rings, come back to the to the ultraproduct, and try to get a contradiction. This worked easily to get Corollary \ref{corollary: application to finite approximate subrings} (even for infinite approximate subrings). This kind of idea was also used in \cite{BGT} for finite approximate subgroups using Lie models, but it required very hard technical work to get the desired classification result for finite approximate subgroups.

\section{Preliminaries}\label{section: preliminaries}

\subsection{Approximate rings}\label{subsection: approximate rings}

[For $K \in \mathbb{N}$] by a {\em [$K$-]approximate subring} of a ring we mean an additively symmetric subset $X$ of this ring such that $X\cdot X \cup (X +X) \subseteq F+X$ for some finite [resp. of size $K$] subset $F$ of the ring generated by $X$, which will be denoted by $\langle X \rangle$. Then $X$ is clearly additively an approximate subgroup. The sequence $(X_n)_{n<\omega}$ is defined recursively: $X_0:=X$ and $X_{n+1}:= X_nX_n+(X_n +X_n)$. For $m \in \mathbb{N}$ let $X^m$ [resp. $X^{\leq m}$] denote  the set of products of $m$ [resp. at most $m$] elements of $X$, and $m(X^{\leq m})$ the set of sums of $m$ elements which are products of at most $m$ elements of $X$. Adapting the proof of \cite[Lemma 5.5]{Bre}, we get the following fact.

\begin{fact}\label{fact: by Lemma 5.5 of Bru}
If $X$ is an approximate subring, then for every $m \in \mathbb{N}_{>0}$, $m(X^{\leq m})$ is covered by finitely many additive translates of $X$. In particular, every $X_n$ is covered by finitely many additive translates of $X$.
\end{fact}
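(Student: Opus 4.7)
The plan is to go through an auxiliary ``translation'' lemma stating that for every $f \in \langle X \rangle$ the sets $f \cdot X$ and $X \cdot f$ are each covered by finitely many additive translates of $X$, and only then to induct on $m$. Write $Y \preceq X$ as shorthand for ``$Y$ is covered by finitely many additive translates of $X$''. The relation $\preceq X$ is trivially preserved by finite unions; moreover it is preserved by sums, because the defining inclusion $X + X \subseteq F_0 + X$ yields $(F_1 + X) + (F_2 + X) \subseteq (F_1 + F_2 + F_0) + X$, and an induction on $k$ then gives $k X \preceq X$. These closure properties will be used freely below.

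To prove the auxiliary lemma I would induct on the complexity of a ring-theoretic expression representing $f$ in terms of elements of $X$. Equivalently, let $S := \{f \in \langle X \rangle : f \cdot X \preceq X \text{ and } X \cdot f \preceq X\}$; it suffices to verify that $S$ contains $X$ and is closed under $+$, $-$, $\cdot$, as then $S \supseteq \langle X \rangle$. The containment $X \subseteq S$ is exactly the approximate-subring axiom $X \cdot X \preceq X$, and additive symmetry of $X$ gives closure under negation. For $f = g + h$ with $g, h \in S$ one has $f X \subseteq g X + h X \preceq X$ by the closure under sums noted above. The subtle step is the product case $f = g \cdot h$: if $h X \subseteq F_h + X$ with $F_h \subseteq \langle X \rangle$ finite, then
\[
f \cdot X \subseteq g \cdot F_h + g \cdot X,
\]
where $g \cdot F_h$ is a finite subset of $\langle X \rangle$ (hence $\preceq X$ trivially) and $g \cdot X \preceq X$ by the inductive hypothesis on $g$; the argument for $X \cdot f$ is symmetric.

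With the auxiliary lemma in hand, I would show $X^n \preceq X$ by induction on $n$: writing $X^n \subseteq F_n + X$, one has $X^{n+1} \subseteq X F_n + X^2$, where each $X \cdot f$ with $f \in F_n$ is $\preceq X$ by the auxiliary lemma, so $X F_n \preceq X$, while $X^2 \preceq X$ by hypothesis. Consequently $X^{\leq m} = X \cup X^2 \cup \dots \cup X^m \preceq X$; say $X^{\leq m} \subseteq F + X$, and then
\[
m(X^{\leq m}) \subseteq m F + m X \preceq X,
\]
using that $m F$ is finite and $m X \preceq X$. For the ``in particular'' clause it suffices to check by induction on $n$ that $X_n \subseteq k_n(X^{\leq 2^n})$ for some integer $k_n$, since the recursion $X_{n+1} = X_n X_n + (X_n + X_n)$ at most doubles the multiplicative length and only mildly enlarges the number of summands; then the first part yields $X_n \preceq X$.

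The main obstacle is choosing the right inductive statement: a direct induction on $m$ in $m(X^{\leq m}) \preceq X$ does not close up, because new products of strictly larger length keep appearing at each step and cannot be reabsorbed without further structural information. Passing through the translation lemma for arbitrary $f \in \langle X \rangle$ circumvents this, and the only genuinely nontrivial point there is the product case, where one has to observe that multiplying an entire cover ``$\subseteq F_h + X$'' on the left by a single ring element $g$ only costs the finite set $g \cdot F_h$ before one is reduced to the inductive hypothesis on $g \cdot X$.
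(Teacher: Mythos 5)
Your proposal is correct and follows essentially the same route as the paper: the heart in both cases is the ``translation lemma'' that $f\cdot X$ (and in your version also $X\cdot f$) is covered by finitely many additive translates of $X$ for every $f\in\langle X\rangle$, proved by induction over the ring expression representing $f$, followed by an induction giving $X^m\subseteq F_m+X$ and the absorption of finite sums via $X+X\subseteq F_0+X$. Your packaging of the translation lemma as closure of the set $S$ under $+$, $-$, $\cdot$ is only a cosmetic variant of the paper's two nested inductions (products of generators first, then sums), and your use of right translates $Xf$ in place of the paper's left translates $xF_m$-decomposition changes nothing essential.
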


\begin{proof}
The second part follows directly from the first, as $X_n$ is contained in $m(X^{\leq m})$ for a sufficiently large $m$. 

Let $F \subseteq \langle X \rangle$ be such that $XX \cup (X+X) \subseteq F+X$.

\begin{clm}
For every $x \in \langle X \rangle$, $xX$ is covered by finitely many additive translates of $X$.
\end{clm}

\begin{clmproof}
First, by induction on $m$, we show that for any $x_0,\dots,x_{m-1} \in X$ one has that $x_{m-1}\dots x_0 X$ is covered by finitely many additive translates of $X$. For $m=1$ we have $x_0X \subseteq XX \subseteq F +X$. For the induction step, assume that $x_{m-1}\dots x_0X \subseteq G +X$ for some finite $G$. Then $x_m\dots x_0X \subseteq x_m(G+X) =x_mG +XX \subseteq x_mG+F +X$ and $x_mG +F$ is clearly finite.

Next, by induction on $m$, we show that whenever $x_0,\dots,x_{m-1} \in \langle X \rangle$ are such that for every $i<m$, $x_iX \subseteq F_i + X$ for some finite $F_i$, then $(x_{m-1} + \dots +x_0)X$ is also covered by finitely many additive translates of $X$. For the induction step, assume that $(x_{m-1} + \dots +x_0)X \subseteq G +X$ for some finite $G$. Then $(x_m + \dots +x_0)X \subseteq F_m+X + G +X \subseteq F_m+G +F +X$ and $F_m+G +F$ is clearly finite.
\end{clmproof}

\begin{clm}
$X^m \subseteq F_m +X$ for some finite $F_m$.
\end{clm}

\begin{clmproof}
We prove it by induction on $m$. For $m=1$ it is trivial. For the induction step, assume that $X^m \subseteq F_m +X$ for some finite $F_m$. By Claim 1, for every $x \in F_m$ there exists a finite $F_x$ such that $xX \subseteq F_x +X$. Hence, $F_mX \subseteq X + \bigcup_{x \in F_m} F_x$ and $G_m: = \bigcup_{x \in F_m} F_x$ is finite. So $X^{m+1} =X^mX \subseteq F_mX +XX \subseteq G_m+X +F +X \subseteq G_m+F +F + X$ and $G_m+F +F$ is finite. 
\end{clmproof}

Since $X^{ \leq m} = X^1 \cup \dots \cup X^{m}$, by Claim 2, we get $X^{\leq m} \subseteq F_m' +X$ for some finite $F_m'$. By an easy induction, we conclude that $m(X^{ \leq m})$ is covered by finitely many additive translates of $X$.
\end{proof}

Two subsets $X$ and $Y$ of a ring $R$ are said to be {\em additively [$K$-]commensurable} if each of these subsets is covered by finitely many [at most $K$] additive translates of the other one.


\begin{corollary}
If $X$ is an approximate subring, then any additively symmetric subset $Y$ of $\langle X \rangle$ additively commensurable with $X$ is also an approximate subring.
\end{corollary}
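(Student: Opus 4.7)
The plan is to unfold the commensurability hypothesis into concrete finite covers and then combine them with what has already been established for $X$. Write $X \subseteq E_1 + Y$ and $Y \subseteq E_2 + X$ for finite $E_1, E_2 \subseteq \langle X \rangle$, and fix a finite $F \subseteq \langle X \rangle$ witnessing $XX \cup (X+X) \subseteq F + X$. What I want to produce is a finite $H \subseteq \langle Y \rangle$ with $YY \cup (Y+Y) \subseteq H + Y$.

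First I would handle $Y+Y$: from $Y \subseteq E_2 + X$ we get
\[
Y + Y \subseteq 2E_2 + (X+X) \subseteq 2E_2 + F + X \subseteq (2E_2 + F + E_1) + Y,
\]
so $Y+Y$ is contained in finitely many additive translates of $Y$. Next I would handle $YY$ by expanding
\[
YY \subseteq (E_2 + X)(E_2 + X) = E_2 E_2 \;+\; E_2 X \;+\; X E_2 \;+\; XX.
\]
The term $XX$ is already controlled by $F + X \subseteq F + E_1 + Y$. For $E_2 X$ and $XE_2$, Claim~1 from the proof of Fact~\ref{fact: by Lemma 5.5 of Bru} (applied both on the left and, by the symmetric argument, on the right) tells us that for each $e \in E_2$ the sets $eX$ and $Xe$ are each covered by finitely many additive translates of $X$, hence of $Y$. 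Thus $YY$ too is covered by finitely many additive translates of $Y$. Combining the two cases yields a finite $H \subseteq \langle X \rangle$ with $YY \cup (Y+Y) \subseteq H + Y$.

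The only remaining obstacle, and the one that requires a small trick, is that the definition demands the finite witnessing set to lie in $\langle Y \rangle$ rather than merely in $\langle X \rangle$. Here I would use a standard trimming: let
\[
H' := \{\, h \in H : (h + Y) \cap (YY \cup (Y+Y)) \neq \emptyset \,\}.
\]
Clearly $YY \cup (Y+Y) \subseteq H' + Y$ still holds. For each $h \in H'$ pick a witness $z_h \in (h + Y) \cap (YY \cup (Y+Y))$ and $y_h \in Y$ with $z_h = h + y_h$; then $h = z_h - y_h$, and since $z_h$ is either a product or a sum of two elements of $Y$ and $y_h \in Y$, using that $Y$ is additively symmetric we get $h \in \langle Y \rangle$. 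Hence $H' \subseteq \langle Y \rangle$ is the required witness, and $Y$ is an approximate subring.
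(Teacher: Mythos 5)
Your proof is correct, but it takes a more hands-on route than the paper. The paper simply observes that the finitely many translating elements witnessing $Y\subseteq\bigcup_i (x_i+X)$ lie in some $X_n$, so $Y\subseteq X_{n+1}$ and hence $Y\cdot Y+(Y+Y)\subseteq X_{n+2}$; Fact \ref{fact: by Lemma 5.5 of Bru} then gives that this set is covered by finitely many additive translates of $X$, hence of $Y$ by commensurability. You instead expand $(E_2+X)(E_2+X)$ directly and control the cross terms via Claim 1 from the proof of Fact \ref{fact: by Lemma 5.5 of Bru} together with its right-sided mirror image (which does hold by the symmetric argument). Both rest on the same covering machinery; the paper's version is shorter and never needs the right-handed statement, while yours is more explicit and, if one tracks constants, yields a quantitative bound on the number of translates in terms of $|E_1|$, $|E_2|$, $|F|$. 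Two small points: (i) to conclude that the sumset $E_2E_2+E_2X+XE_2+XX$ is covered by finitely many translates of $Y$ you must absorb the several copies of $X$ (or $Y$) that appear, using $X+X\subseteq F+X$ or the already established $Y+Y\subseteq (2E_2+F+E_1)+Y$ — routine, but worth a word; (ii) your final trimming step, ensuring the witnessing finite set lies in $\langle Y\rangle$ as the definition requires, is a genuine point that the paper's proof leaves implicit, and the same trick also justifies your opening assumption that $E_1,E_2$ may be taken inside $\langle X\rangle$ (which you need in order to apply Claim 1 to the elements of $E_2$).
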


\begin{proof}
By assumption, $Y \subseteq \bigcup_{i=0}^{m-1} x_i + X$ for some $x_i \in \langle X \rangle$. Since all $x_i \in X_n$ for some $n$, we get that $Y \subseteq X_{n+1}$. Thus, $Y^2 + (Y+Y)\subseteq X_{n+2}$. Hence,  by Fact \ref{fact: by Lemma 5.5 of Bru}, $Y^2 +(Y+Y)$ is covered by finitely many additive  translates of $X$, and so, by assumption, it is covered by finitely many additive translates of $Y$.
\end{proof}

However, it is not  true that if an additively  symmetric subset $Y$ of a ring $R$ is additively  commensurable with an approximate subring (or even actual subring) of $R$, then $Y$ is an approximate subring. As an example, take a proper field extension $K \subseteq L$, where $K$ is an infinite field, and pick $t \in L \setminus K$. Then $Y:=( t+K) \cup \{0\} \cup (-t+K)$ is clearly additively symmetric and commensurable with $K$, while it is easy to see that it is not an approximate subring of $L$. Taking $t$ transcendental over $K$, we even have that for every $m$, $Y_m$ is not an approximate subring.

\subsection{Model theory}\label{subsection: model theory}

Let $T$ be a complete first order theory in a language $\mathcal{L}$. For a model $M$ of $T$ and $A \subseteq M$, a {\em type} over $A$ is a consistent collection of formulas with parameters from $A$. A {\em monster model} of $T$ (often denoted by $\mathfrak{C}$) is a $\kappa$-saturated and strongly $\kappa$-homogeneous model of $T$ for a sufficiently large cardinal $\kappa$; usually it suffices to assume that $\kappa$ is a strong limit cardinal greater that $|\mathcal{L}|$ (i.e. the cardinality of the set all formulas in $\mathcal{L}$). {\em $\kappa$-saturation} means that every type over a set of parameters from $\mathfrak{C}$ of cardinality less than $\kappa$ has a realization in $\mathfrak{C}$; {\em strong $\kappa$-homogeneity} means that every elementary map between subsets of $\mathfrak{C}$ of cardinality smaller than $\kappa$ extends to an automorphism of $\mathfrak{C}$. It is a common thing in model theory to work in a fixed monster model, which always exists (by using model-theoretic compactness and a suitable recursive construction). A subset of $\mathfrak{C}$ is said to be {\em small} if its cardinality is smaller than $\kappa$; a cardinal is {\em bounded} if it is smaller than $\kappa$. It is very convenient to work in a monster model, especially when one deals with definable approximate groups or rings and with the model-theoretic connected components of the groups or rings generated by them.

Working in a model $M$ of $T$, for $A\subseteq M$, an {\em $A$-definable} set is the set of realizations in $M$ of a formula with parameters from $A$; a {\em definable set} is an $M$-definable set; instead of ``$\emptyset$-definable'' we will write ``0-definable''. Working in $\C$, for a small $A \subseteq \C$, an {\em $A$-type-definable} set is the set of realizations in $\C$ of a type over $A$; a {\em type-definable set} is an $A$-type-definable set for some small $A \subseteq \C$. (The empty set is also considered as type-definable if needed.) Finally, a subset of $\C$ (or of a Cartesian power of $\C$) is said to be {\em $A$-invariant} if it is invariant under $\Aut(\C/A)$ ($=$ the pointwise stabilizer of $A$); in contrast to definability and type-definability, {\em invariance} means 0-invariance (i.e.  invariance under $\Aut(\C)$). Throughout the paper, $\C$ is always chosen as a monster model with respect to $M$, that is $\C \succ M$ and the degree of saturation (i.e. $\kappa$ above) of $\C$ is bigger than $|M|$.

A group [ring] $G$ is said to be {\em $A$-definable} if both the universe $G$ and the group operation [resp. $\cdot$ and $+$] are $A$-definable. {\em Type-definable} and {\em invariant} groups [rings] are defined analogously (working in $\C$). 

Definable (in $M$) approximate subgroups and subrings were defined in the introduction. Adding finitely many parameters from $M$ to the language, we can and do assume that they are 0-definable. More general notions are those of $\bigvee$-definable (or ind-definable) groups and rings, but we will not go into that in this paper.

For a definable subset $D$ of $M$, by $\bar D$ we will usually denote its interpretation in $\C$, 
but with one exception. If $X$ is a definable (in $M$) approximate subgroup [or subring] and $R:=\langle X \rangle$, then $\bar X$ is the interpretation of $X$ in $\C$, but $\bar R$ will stand for $\langle \bar X \rangle$. It may happen that $R$ is definable in $M$, and if it is not the case that $R =X^m$ [or $R=X_m$ in the case of rings] for some $m$, then $\bar R$ is not the interpretation of the definable $R$ in $\C$. This is because, by saturation of $\C$, the fact that $\bar R$ is definable is equivalent to $\bar R=\bar X^m$  [or $\bar R=\bar X_m$ in the case of rings] for some $m$. 

For any $a \in \C$ and $A \subseteq \C$, by $\tp(a/A)$ we denote the {\em type of $a$ over $A$}, that is the collection of all formulas over $A$ realized by $a$. By $\dcl(A)$ we denote the {\em definable closure} of $A$, i.e. the collection of all elements which are fixed by $\Aut(\C/A)$.

When $D$ is a definable set in $M$ and $C$ is a compact space, then a function $f \colon D \to C$ is said to be {\em definable} if the preimages of any two disjoint closed subsets of $C$ can be separated by a definable subset of $D$. (This is essentially saying that $f$ is a continuous logic formula, but we will not use any continuous logic terminology in this paper.) By \cite[Lemma 3.2]{GPP}, this is equivalent to saying that $f$ extends to an {\em $M$-definable map} $\bar f \colon \bar D \to C$ in the sense that the preimage of any closed subset of $C$ is $M$-type-definable. Such an extension $\bar f$  is unique and given by $\bar f(a)=\bigcap_{\varphi(x) \in \tp(a/M)} \cl (f[\varphi(M) \cap D])$ (where $\cl$ denotes the closure in $C$).
In Section \ref{section: components of approximate rings}, we extend these considerations to definable approximate subgroups and subrings.

\subsection{Model-theoretic connected components of definable groups and rings}

We recall below some facts on model-theoretic connected components of definable groups and rings. While definable groups have played an important role in model theory for many years, the components of rings were introduced recently in \cite{GJK} where they were used to compute Bohr compactifications of some groups of matrices, e.g. both the discrete and continuous Heisenberg group. They were further studied in \cite{KrRz}.
	
Let $R$ be a 0-definable group [resp. ring], $\RR=R(\C)$, and $A \subseteq \C$ be a small set of parameters.

\begin{itemize}
		\item ${\bar R}^0_A$ is the intersection of all $A$-definable, finite index subgroups [ideals] of $\bar R$.
		\item ${\bar R}^{00}_A$ is the smallest $A$-type-definable, bounded index subgroup [ideal] of $\bar R$.
		\item ${\bar R}^{000}_A$ is the smallest $A$-invariant, bounded index subgroup [ideal] of $\bar R$.
	\end{itemize}
	
	We did not specify whether the ideals above are left, right, or two-sided. This is because of Proposition 3.6, Corollary 3.7, and Proposition 3.10 from \cite{GJK} which tell us that
	
	\begin{fact}\label{fact: ideal component}
		The above components of the ring $\bar R$ do not depend on the choice of the version (left, right, or two-sided) of the ideals. Moreover, instead of ``ideal'' we can equivalently write ``subring'' in the above definitions.
	\end{fact}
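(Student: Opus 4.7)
The plan is to prove each of the three equalities by exhibiting, for every subring $S \subseteq \bar R$ in the defining family of the ``subring version'' of the component, a two-sided ideal $J(S) \subseteq S$ in the defining family of the ``two-sided ideal version''; the left and right ideal cases are then sandwiched in between.

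First, the trivial chain of inclusions. Any two-sided ideal is both a left and a right ideal, and any left (or right) ideal $I$ of $\bar R$ is automatically a subring, since $xy \in \bar R \cdot I \subseteq I$ for $x,y \in I$. Writing $\bar R^{*}_A(\star)$ with $* \in \{0, 00, 000\}$ and $\star \in \{\text{subring}, \text{left}, \text{right}, \text{two-sided}\}$, this gives
\[
\bar R^{*}_A(\text{subring}) \subseteq \bar R^{*}_A(\text{left}),\ \bar R^{*}_A(\text{right}) \subseteq \bar R^{*}_A(\text{two-sided}).
\]

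For the reverse inclusion, given a subring $S \subseteq \bar R$ of bounded (resp. finite) index, set
\[
J(S) := \bigl\{ x \in S : (\forall r \in \bar R)(rx, xr \in S) \wedge (\forall r, s \in \bar R)\, rxs \in S \bigr\}.
\]
Direct checks using associativity and the universal quantifiers show that $J(S)$ is closed under left and right multiplication by $\bar R$ and under addition, so it is a two-sided ideal contained in $S$. The additive index is controlled by that of $S$: writing $\lambda_r, \rho_r$ for left and right multiplication by $r$, $J(S)$ is the intersection of $S$ with the additive subgroups $\lambda_r^{-1}(S), \rho_r^{-1}(S), (\lambda_r \rho_s)^{-1}(S)$ as $r, s$ range over $\bar R$; each such preimage injects into $\bar R / S$ and so has index at most $[\bar R : S]$, and only boundedly (resp. finitely) many distinct such subgroups occur, since each is determined by a subgroup of $\bar R / S$. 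Hence $[\bar R : J(S)]$ is bounded (resp. finite).

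The crux is that $J(S)$ inherits the definability class of $S$. If $S = \bigcap_i \varphi_i(\bar R)$ with each $\varphi_i$ over $A$, then $J(S) = \bigcap_i \psi_i(\bar R)$ for
\[
\psi_i(x) := \varphi_i(x) \wedge (\forall r \in \bar R)(\varphi_i(rx) \wedge \varphi_i(xr)) \wedge (\forall r, s \in \bar R)\, \varphi_i(rxs),
\]
and each $\psi_i$ is an $A$-formula precisely because $\bar R$ is $0$-definable, so the quantifier over $\bar R$ introduces no new parameters; this handles $\bar R^{00}_A$. The $A$-definable case ($\bar R^{0}_A$) collapses to a single formula $\psi$, and in the $A$-invariant case ($\bar R^{000}_A$) invariance of $J(S)$ is immediate from $A$-invariance of $S$ and $\bar R$ (if $\sigma \in \Aut(\C/A)$ fixes $\bar R$ and $S$ setwise, the defining condition on $x$ transports to the same condition on $\sigma(x)$). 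Intersecting over all admissible $S$ yields $\bar R^{*}_A(\text{two-sided}) \subseteq \bar R^{*}_A(\text{subring})$, closing the chain for each $*$. The step I expect to need the most careful statement is precisely this preservation of $A$-type-definability under the ``$\forall r \in \bar R$'' quantifier---this is what prevents $J(S)$ from being merely $A$-invariant and what lets the same construction cover all three components uniformly.
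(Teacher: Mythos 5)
Your overall architecture is sound and close in spirit to the source the paper relies on (the paper does not prove this Fact itself but cites Propositions 3.6, 3.7 and 3.10 of \cite{GJK}, where the argument likewise passes through the largest ideal contained in a given bounded-index subring), and your key definability observation is correct: since $\bar R$ is $0$-definable, the quantifier ``$\forall r\in\bar R$'' can be absorbed into the formulas $\psi_i$, so $J(S)$ really is $A$-definable/$A$-type-definable/$A$-invariant along with $S$; this is exactly what makes the argument work for definable rings (and what fails verbatim for $\bar R=\langle\bar X\rangle$ in Proposition \ref{proposition: side is irrelevant}). The genuine gap is the index bound for $J(S)$. Your justification --- ``only boundedly (resp.\ finitely) many distinct such subgroups occur, since each is determined by a subgroup of $\bar R/S$'' --- does not hold up: $\lambda_r^{-1}(S)$ is the kernel of the additive map $x\mapsto rx+S$ from $\bar R$ to $\bar R/S$, and kernels of maps into a fixed small group can form an unbounded family of distinct subgroups (an elementary abelian $2$-group of unbounded rank has unboundedly many index-$2$ subgroups, each the kernel of a map to $\mathbb{F}_2$), while an intersection of unboundedly many bounded-index subgroups need not have bounded index. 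Worse, your index argument never uses that $S$ is multiplicatively closed, and without that hypothesis the claim you are proving is false: on $R=V\oplus W\oplus\mathbb{F}_2$ with multiplication $(v,w,c)(v',w',c'):=(0,0,\langle v,w'\rangle)$ for a perfect pairing, the index-$2$ subgroup $S=V\oplus W$ has trivial core, and the subgroups $\lambda_r^{-1}(S)$ form an unbounded family of pairwise distinct index-$2$ subgroups. So some multiplicative input about $S$ is indispensable at precisely this step.

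The repair is short and does use the subring hypothesis. For $x\in S$, the additive map $r\mapsto rx+S$ vanishes on $S$ (because $S\cdot S\subseteq S$), hence induces an additive endomorphism $\theta_x$ of $\bar R/S$; since $x\mapsto\theta_x$ is additive on $S$, its kernel $L:=\{x\in S:\bar Rx\subseteq S\}$ has index at most $|\Hom(\bar R/S,\bar R/S)|\leq\kappa^{\kappa}$ in $S$, where $\kappa=[\bar R:S]$ (so finite when $\kappa$ is finite, bounded when $\kappa$ is bounded), and $L$ is a left ideal contained in $S$. Repeating the argument on the right inside $L$ (now the map $s\mapsto xs+L$ vanishes on $L$ because $xL\subseteq\bar RL\subseteq L$) produces a two-sided ideal $J\subseteq L\subseteq S$ of finite (resp.\ bounded) index, and $J$ is $A$-definable/type-definable/invariant by two applications of your quantifier-absorption argument. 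With this replacement for the index estimate, your sandwiching of the left, right, two-sided and subring versions goes through for all three components.
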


In the case of a definable group $R$, it is easy to see (cf.\ for example \cite[Lemma 2.2(3)]{Gis1}) that $\bar R^0_A, \bar R^{00}_A, \bar R^{000}_A$ are always normal subgroups of $\RR$.

For a definable group [or ring] $R$ and a small $A \subseteq \C$, ${\bar R}^{000}_A \leq {\bar R}^{00}_A \leq {\bar R}^0_A$. It is easy to see (cf. \cite[Lemma 2.2(1)]{Gis1}) that all these components exist and their indices in $\bar R$ are in fact bounded by $2^{|\mathcal{L}|+|A|}$. 

If $S$ is a type-definable, normal subgroup [two-sided ideal] in $\RR$ of bounded index, then $\RR/S$ is equipped with the \emph{logic topology}: closed sets are those whose preimages under the quotient map are type-definable. This makes the quotient $\RR/S$ a compact (topological) group [ring] (for the case of groups see \cite[Section 2]{Pil}; for rings it remains to check that multiplication is continuous which is an easy exercise).

A {\em compactification} of a (discrete) group [resp. ring] $R$ is a homomorphism $f \colon R \to C$ with dense image, where $C$ is a compact group [ring]. A {\em definable compactification} of $R$ is a compactification which is a definable map as defined in Subsection \ref{subsection: model theory}. The {\em Bohr compactification} of $R$ is the unique (up to isomorphism) universal compactification $h \colon R \to U$ of $R$ (universality means that for any other compactification $f \colon R \to C$ there exists a unique continuous homomorphism $g: U \to C$ such that $f=g \circ h$); and similarly in the definable version.

By \cite[Proposition 3.4]{GPP} and \cite[Proposition 3.28]{GJK}, we know that the quotient map $R \to \RR/\RR^{00}_M$ is the definable Bohr compactification of the group [ring] $R$. The idea of the proof is very simple. If $f \colon G \to C$ is a definable compactification, one extends it uniquely to an $M$-definable map $\bar f \colon \bar R \to C$ and checks that $\bar f$ is also a homomorphism which factors through the quotient map $\bar R \to \bar R/\bar R^{00}_M$. Similarly,   the quotient map $R \to \RR/\RR^{0}_M$ is the universal definable profinite compactification of $R$. So for example the equality $\RRiT_M=\RRiD_M$ means precisely that both compactifications coincide. When $R$ is equipped with the full structure, we can
erase the adjective ``definable'' and we get classical notions of compactification (described in a model-theoretic way).

Using the classical fact that compact unital or finite characteristic rings are profinite, we get that whenever a 0-definable ring $R$ is unital or of finite characteristic, then $\RR^{0}_A=\RR^{00}_A$ (see \cite[Corollary 2.10]{KrRz}). The following is \cite[Theorem 1.2]{KrRz}:

\begin{fact}\label{theorem: Main Theorem}
		Let $R$ be a 0-definable ring and $A \subseteq \C$ a small set of parameters.
		\begin{enumerate}
			\item If $R$ is unital, then $\RRaT_A + \RR \cdot \RRaT_A + \RR \cdot \RRaT_A = \RR^{000}_A=\RR^{00}_A =\RR^{0}_A$.
			\item If $R$ is of positive characteristic (not necessarily unital), then $\RRaT_A + \RR \cdot \RRaT_A = \RR^{000}_A=\RR^{00}_A =\RR^{0}_A$.
		\end{enumerate}
	\end{fact}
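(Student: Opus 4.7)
This is \cite[Theorem 1.2]{KrRz}, and I would follow its general strategy, proving the chain of equalities outside in.

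The easy inclusions are $\RR^{000}_A \subseteq \RRiT_A \subseteq \RRiD_A$ by definition, together with $\RRaT_A \subseteq \RRiT_A$ since any $A$-type-definable two-sided ideal of bounded index is in particular such an additive subgroup. Because $\RRiT_A$ is a two-sided ideal, the whole left-hand side of each claimed identity lies in $\RRiT_A$. For the outer equality $\RRiT_A = \RRiD_A$, I would equip $\RR/\RRiT_A$ with the logic topology, making it a compact Hausdorff topological ring. In case (1) it is unital, in case (2) of positive characteristic; in either case the classical Numakura--Kaplansky structure theorem (already invoked via \cite[Corollary 2.10]{KrRz}) says such a compact ring is profinite, so $\RRiT_A$ is the intersection of open finite-index ideals and thus coincides with $\RRiD_A$.

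The heart of the argument is the central identity LHS $= \RR^{000}_A$. For the $\supseteq$ inclusion, I would verify that the LHS is an $A$-invariant bounded-index two-sided ideal: boundedness of index and $A$-invariance are inherited from $\RRaT_A$, while closure under addition and under left multiplication by $\RR$ is immediate. The delicate point is closure under right multiplication by $\RR$: in the unital case, $1 \in \RR$ allows right multiplications to be absorbed into the designed three-term expression; in the positive-characteristic case (no unit available) one must work in the compact abelian quotient $\RR/\RRaT_A$ and use positive characteristic to convert right multiplications into finite sums of left multiplications modulo $\RRaT_A$. This is the step that dictates the precise form ($2\tfrac12$ versus $1\tfrac12$ steps) of the left-hand expression.

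For the $\subseteq$ direction, amenability of the abelian group $\RRa$ via the Krupi\'nski--Pillay framework \cite{KrPi} gives $\RRaI_A = \RRaT_A$, and a compactness argument in the compact quotient $\RR/\RRaT_A$ upgrades the LHS from $A$-invariant to $A$-type-definable; being then an $A$-type-definable two-sided ideal of bounded index it must contain $\RRiT_A$, closing the chain. I expect the main obstacle to be the right-multiplication closure in case (2): without a unit this is not a matter of ideal algebra inside $\RR$ but genuinely requires the structure theory of compact positive-characteristic rings, and it is the only place where the specific shape of the left-hand side is really forced.
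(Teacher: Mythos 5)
The paper does not actually prove this statement: it is quoted as \cite[Theorem 1.2]{KrRz}, so a citation is all that is required, and your opening sentence supplies it. The trouble is that the sketch you then give misplaces the difficulty. Writing $H=\RRaT_A$, the sets $H+\RR\cdot H$ and $H+\RR\cdot H+\RR\cdot H$ are \emph{not} obviously closed under addition: $\RR\cdot H$ is a set of products, a sum of two elements of $H+\RR\cdot H$ involves two product terms, and there is no a priori way to recombine them into the prescribed shape. This additive closure is exactly the content of the theorem (``generating an ideal in $2\tfrac12$, resp.\ $1\tfrac12$, steps''), and it is the hard part; Examples 8.1 and 8.2 of \cite{KrRz}, recalled in this paper, show that $(\RR\cup\{1\})\cdot\RRaT_A$ need not be an additive subgroup, so nothing of this kind is ``immediate''. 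The proof in \cite{KrRz} goes through a delicate argument (the analogue here is Lemma \ref{lemma: main technical lemma}, via coset-(in)dependence and a minimality trick) showing that a suitable intersection of sets of the form $\RR K/H$ is a subgroup of $\RR/H$. By contrast, the point you single out as the crux --- closure under right multiplication --- is a non-issue: it suffices to obtain a \emph{left} ideal, since by Fact \ref{fact: ideal component} the smallest bounded-index $A$-type-definable (or $A$-invariant) left, right and two-sided ideals coincide, and left-multiplicative closure of $H+\RR\cdot H$ is trivial once additive closure is known. The gap between $2\tfrac12$ and $1\tfrac12$ steps likewise comes out of the additive-closure argument in the two cases, not from any conversion of right multiplications into sums of left multiplications.

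Two smaller points: for a definable ring $\RR$ the set $\RRaT_A+\RR\cdot\RRaT_A$ is automatically $A$-type-definable (as used in Corollary 4.10 of this paper), so no compactness ``upgrade'' from invariant to type-definable is needed; and the parts of your sketch that are sound are the ones the paper itself flags, namely $\RRaT_A=\RRaI_A$ by amenability of the abelian additive group, and $\RR^{00}_A=\RR^0_A$ via profiniteness of compact unital or positive-characteristic rings (\cite[Corollary 2.10]{KrRz}).
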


It was asked in \cite{KrRz} whether a similar fact holds for arbitrary 0-definable $R$ (except ``$=\RR^{0}_A$'', which fails in general; e.g. in some rings with zero multiplication) and if yes, how many steps are needed. In Section \ref{section: main results}, we will answer this question by proving that for every 0-definable ring $R$,  $\RRaT_A + \RR \cdot \RRaT_A = \RR^{000}_A=\RR^{00}_A$, so $1\frac{1}{2}$ steps always suffice. On the other hand, Examples 8.1 and 8.2 of \cite{KrRz} show that one cannot decrease the number of steps to $1$ (i.e. $(\RR \cup \{1\}) \cdot \RRaT_A$ need not be an additive subgroup), even for commutative, unital rings of finite characteristic.

\section{Model-theoretic connected components of definable approximate groups and rings}\label{section: components of approximate rings}

For a definable (in some $M$) approximate subgroup [subring] $X$, $R:=\langle X \rangle$, $\bar R = \langle \bar X \rangle$, and a small set of parameters $A \subseteq \C$, we define the following components.
\begin{itemize}
		\item ${\bar R}^{00}_A$ is the smallest $A$-type-definable, bounded index subgroup [two-sided ideal] of $\bar R$.
		\item ${\bar R}^{000}_A$ is the smallest $A$-invariant, bounded index subgroup [two-sided ideal] of $\bar R$.
	\end{itemize}

In contrast to definable groups, the existence of ${\bar R}^{00}_A$ for definable approximate subgroups [subrings] is a non-trivial issue. 

Both above components for definable approximate subgroups were studied in Section 4 of \cite{HKP}. In particular, Proposition 4.3 of \cite{HKP} yields the existence and a description of ${\bar R}^{000}_A$ which implies that $[\RR:{\bar R}^{000}_A] \leq 2^{|\mathcal{L}| +|A|}$. On the other hand, \cite[Subsection 4.3]{HKP} yields an example where ${\bar R}^{00}_A$ does not exist. The existence of ${\bar R}^{00}_A$ is equivalent to the existence of some $A$-type-definable subgroup of $\bar R$ of bounded index (as then the intersection of all such subgroups  is $A$-type-definable of index  $\leq  2^{|\mathcal{L}| +|A|}$, so equals ${\bar R}^{00}_A$). 

In the context of definable approximate subrings, we will prove in Section \ref{section: main results} that  $(\bar R,+)^{00}_A + \bar R  \cdot (\bar R,+)^{00}_A=\RR^{000}_A$ which further equals $\RR^{00}_A$ provided that $R \subseteq \dcl(A)$; in particular, in contrast to definable approximate subgroups, ${\bar R}^{00}_A$ always exists for definable approximate subrings (under the assumption that $R \subseteq \dcl(A)$).  For completeness notice that the existence of  ${\bar R}^{000}_A$ 
is clear: the intersection of all $A$-invariant, bounded index, two-sided ideals of $\bar R$ will be $A$-invariant and of bounded index $\leq [\RR: (\RR,+)^{000}_A] \leq  2^{|\mathcal{L}| +|A|}$.

The proofs of statements 3.3, 3.5, and 3.6(i,ii) of \cite{GJK} go through with very minor adjustments to conclude with

\begin{proposition}\label{proposition: side is irrelevant}
		The components $\RR^{00}_A$ and $\RR^{000}_A$ of the ring $\bar R$ do not depend on the choice of the version (left, right, or two-sided) of the ideals. Moreover, instead of ``two-sided ideal'' we can equivalently write ``subring'' in the above definitions.
\end{proposition}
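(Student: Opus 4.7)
The plan is to adapt, with only minor technical adjustments, the proofs of Propositions 3.3, 3.5, and 3.6(i,ii) of \cite{GJK}, which establish the analogous equivalences for $0$-definable rings. The essential new point here is that $\bar R = \bigcup_m \bar X_m$ is only a directed union of $A$-type-definable sets rather than being itself $A$-definable, and this accounts for the required adjustments.

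I would first dispose of the easy direction. Every (left, right, or two-sided) ideal is in particular a subring, and every two-sided ideal is both a left and a right ideal. Writing $S, L_l, L_r, I$ for the candidate smallest $A$-type-definable (resp.\ $A$-invariant) bounded-index subring, left ideal, right ideal, and two-sided ideal of $\bar R$, one immediately has $S \subseteq L_l \cap L_r \subseteq I$, and existence of any of these four objects propagates to the others, since the intersection of boundedly many bounded-index $A$-type-definable sets remains bounded-index and $A$-type-definable (and similarly in the $A$-invariant case). It thus suffices to show $I \subseteq S$, i.e., that $S$ is itself a two-sided ideal.

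For the $\bar R^{000}_A$ version, I would argue as follows. Fix $a \in \bar R$ and set $S_a := \{x \in \bar R : ax \in S\}$; this is $A \cup \{a\}$-type-definable, and of bounded index because the additive map $x + S_a \mapsto ax + S$ injects $(\bar R,+)/S_a$ into $(\bar R,+)/S$. Its $A$-invariant ``core'' $M_a := \bigcap_{a' \equiv_A a} S_{a'}$ is still of bounded index via the standard pigeonhole argument: the orbit $\{S_{a'} : a' \equiv_A a\}$ under $\Aut(\bar R/A)$ is bounded because this group acts on the bounded set $\bar R/S$. Then $S \cap M_a$ is an $A$-invariant bounded-index subring contained in $S$, so by minimality of $S$ as an $A$-invariant bounded-index subring one obtains $S = S \cap M_a$, forcing $aS \subseteq S$. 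A symmetric argument with ${}_aS := \{x : xa \in S\}$ yields $Sa \subseteq S$, so $S$ is a two-sided ideal, and all four versions of $\bar R^{000}_A$ coincide.

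The main obstacle, and the source of the ``minor adjustments'' to \cite{GJK}, lies in the $\bar R^{00}_A$ case: the core $S \cap M_a$ is $A$-invariant but not in general $A$-type-definable. The remedy is to work one $\bar X_m$ at a time and iterate the core construction: define $S^{(0)} := S$ and $S^{(n+1)} := S^{(n)} \cap \{x \in \bar R : \bar R \cdot x \cup x \cdot \bar R \subseteq S^{(n)}\}$, with $S^{(\omega)} := \bigcap_{n < \omega} S^{(n)}$. The crux is to verify inductively that each $S^{(n)}$ is $A$-type-definable of bounded index; this uses a careful compactness argument exploiting the $A$-type-definability of each $\bar X_m$ together with the representation $S^{(n)} = \bigcap_i \varphi_i$, and boundedness of the index propagates from the boundedness of $[\bar R : S^{(n)}]$ and the bounded cardinality of the space of complete $A$-types realized in each $\bar X_m$. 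The limit $S^{(\omega)}$ is then an $A$-type-definable bounded-index two-sided ideal contained in $S$, whence by minimality $S = S^{(\omega)}$, completing the proof.
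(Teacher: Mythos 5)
Your overall strategy is the right one and is essentially the GJK-style argument the paper invokes: reduce everything to showing that the smallest $A$-type-definable (resp.\ $A$-invariant) bounded-index \emph{subring} $S$ is a two-sided ideal, by cutting $S$ down to a transporter-type ``core''. But at the decisive step --- boundedness of the index of the core --- both of your justifications are non sequiturs, and they break down exactly where the subring hypothesis has to be used. In the $\RR^{000}_A$ case you claim the family $\{S_{a'} : a' \equiv_A a\}$ is bounded ``because $\Aut(\C/A)$ acts on the bounded set $\RR/S$''. The kernel $N$ of that action does have bounded index in $\Aut(\C/A)$, but for $\sigma \in N$ you only know $\sigma(a) \in a+S$, and $S_{a+s}=S_a$ would require $s\RR \subseteq S$, i.e.\ that $S$ is already a left ideal --- which is what you are trying to prove (equivalently, $S_a$ need not be a union of $S$-cosets, so $N$ need not stabilize it). Indeed, your argument never uses that $S$ is multiplicatively closed, and for a mere bounded-index subgroup the conclusion is false: in a square-zero ring built from a nondegenerate bilinear form $B\colon V\times V\to \mathbb{F}_2$ on $\mathbb{F}_2\oplus V$, the index-$2$ subgroup $0\oplus V$ has $S_{(0,v)}=\mathbb{F}_2\oplus v^{\perp}$, an unbounded family whose intersection has unbounded index. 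Similarly, in the $\RR^{00}_A$ case, ``bounded cardinality of the space of complete $A$-types realized in $\bar X_m$'' does not bound $[\RR:S^{(n+1)}]$: an $A$-invariant subgroup that is a union of boundedly many $A$-type classes can still have unbounded index (e.g.\ $\{0\}$), so no compactness/type-counting argument alone can do this. (A smaller slip: in the $\RR^{000}_A$ case $S$ is only $A$-invariant, so $S_a$ is $A\cup\{a\}$-invariant, not type-definable.)

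The missing idea --- the actual content of statements 3.3, 3.5, 3.6 of \cite{GJK} --- is to combine multiplicative closedness of $S$ with a bounded set of additive coset representatives $(r_j)_{j<\beta}$ of $S$ in $\RR$: for $x\in S$ and $r=r_j+s$ with $s\in S$ one has $rx=r_jx+sx$ and $sx\in S$, so on $S$ the condition $\RR x\subseteq S$ reduces to the boundedly many conditions $r_jx\in S$ (and likewise for $x\RR\subseteq S$ and $\RR x\RR\subseteq S$). Hence the core $I:=\{x\in S : \RR x\cup x\RR\cup \RR x\RR\subseteq S\}$ contains $S\cap\bigcap_j\{x: r_jx\in S\}\cap\{x: xr_j\in S\}\cap\bigcap_{j,j'}\{x: r_jxr_{j'}\in S\}$, an intersection of boundedly many bounded-index subgroups, so it has bounded index; it is $A$-invariant, it is a two-sided ideal, and in the type-definable case it is $A$-type-definable outright because $\RR=\bigcup_m \bar X_m$ with each $\bar X_m$ definable, so each condition $\bar X_m x\subseteq S$ is a conjunction of formulas over $A$. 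Minimality of $S$ then finishes the proof in one step --- your iteration $S^{(n)}$ is only forced on you because you dropped the condition $\RR x\RR\subseteq S$, and if you do iterate you must additionally check that each $S^{(n)}$ is a subring, since that is precisely what makes the coset-representative trick (and hence the boundedness induction) work. As written, the boundedness claims are unjustified, and this is a genuine gap rather than a cosmetic one.
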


Let $\RR$ be as in the first sentence of this section.
If $I$ is a type-definable, normal subgroup [two-sided ideal] in $\RR$ of bounded index, then $\RR/I$ is equipped with the \emph{logic topology}: a subset $F \subseteq \RR/I$ is closed if the sets $\pi^{-1}[F] \cap \bar X^m$ [resp. $\pi^{-1}[F] \cap \bar X_m$] are type-definable for every $m \in \omega$, where $\pi \colon \RR \to \RR/I$ is the quotient map. This makes the quotient $\RR/I$ a locally compact topological group [resp. ring]. For groups it appeared first time in Section 7 of \cite{HPP}, and then stood behind the model-theoretic approach to approximate subgroups. For rings one additionally has to check that multiplication is continuous, which is an easy exercise. Let us only remark that by compactness (or rather saturation of $\C$), $I \subseteq \bar X^m$ [resp. $I\subseteq \bar X_m$] for some $m \in \omega$. 
A compact neighborhood of $e$ [resp. of $0$] is for example the set $\bar X^n/I$ [resp. $\bar X_n/I$] for any $n \geq m$, with an open neighborhood of $e$ [resp. $0$] contained in it being $\{a/I: aI \subseteq \bar X^n\}$ [resp. $\{a/I: a+I \subseteq \bar X_n\}$]. The compact subsets of $\RR/I$ are those with type-definable preimage under $\pi$ (and so contained in some $\bar X^n$ [resp. $\bar X_n$]).

As was already explained in the introduction, one can extend the notion of a definable map from a definable set to a compact space to homomorphisms from groups [rings] generated by definable approximate subgroups [subrings] to locally compact groups. Namely, for a definable approximate subgroup [subring] $X$ and a locally compact group [resp. ring] $H$, a homomorphism $f \colon R \to H$ such that $f[X]$ is relatively compact in $H$ will be called {\em  definable} if for any open $U \subseteq H$ and compact $C \subseteq H$  such that $C \subseteq U$, there exists a definable (in $M$) subset $Y$ of $R$ such that $f^{-1}[C] \subseteq Y \subseteq f^{-1}[U]$. 

\begin{lemma}\label{lemma: extension of definable map}
Let $X$ be a definable approximate subgroup [subring] and $H$ a locally compact group [ring]. Let $f \colon R \to H$ be a homomorphism such that $f[X]$ is relatively compact in $H$.
\begin{enumerate}
\item If $f$ is definable, then it extends uniquely to a map $\bar f \colon \RR \to H$ such that $\bar f^{-1}[C] \cap \bar X^m$ [resp. $\bar f^{-1}[C] \cap \bar X_m$] is $M$-type-definable for every $m$ and for every closed $C \subseteq H$. This unique $\bar f$ is a homomorphism.
\item Assume additionally that there is a neighborhood $V$ of $e$ [resp. of $0$] in $H$ such that $f^{-1}[V] \subseteq X^m$ [resp. $f^{-1}[V] \subseteq X_m$] for some $m$. Then, if $f$ extends to  some $\bar f \colon \RR \to H$ as in (1), then $f$ is definable.
\end{enumerate}
\end{lemma}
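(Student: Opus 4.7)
The plan is to reduce part (1) to the already-cited extension theorem for definable maps from definable sets to compact spaces (Subsection \ref{subsection: model theory}, after \cite[Lemma 3.2]{GPP}) applied layer by layer to the sequence $X^m$ (respectively $X_m$ in the ring case). For each $m$ set $H_m := \cl(f[X^m])$ in the group case and $H_m := \cl(f[X_m])$ in the ring case. Because $f[X]$ is relatively compact in $H$ and the operations of $H$ are continuous, $H_m$ is compact in both cases: for groups $H_m = \cl(f[X])^m$, while in the ring case one checks inductively that $f[X_{m+1}]$ is contained in $\cl(f[X_m]) \cdot \cl(f[X_m]) + (\cl(f[X_m]) + \cl(f[X_m]))$, so $H_m$ stays inside a compact subset of $H$. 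Thus $f|_{X^m}$ (resp.\ $f|_{X_m}$) is a definable map from a definable set into the compact space $H_m$, and the cited extension lemma yields a unique $M$-type-definable map $\bar f_m \colon \bar X^m \to H_m$ (resp.\ $\bar X_m \to H_m$). Uniqueness forces the $\bar f_m$ to agree on overlaps, so $\bar f := \bigcup_m \bar f_m$ is a well-defined extension $\bar R \to H$, and the required type-definability of $\bar f^{-1}[C] \cap \bar X^m$ is built into the construction (and gives the asserted uniqueness of $\bar f$).

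To show that $\bar f$ is a homomorphism, the plan is to reuse the scheme of \cite[Proposition 3.4]{GPP}. Concretely, each $\bar f_m$ is described pointwise by
\[
\bar f(a) \;=\; \bigcap_{\varphi(x) \in \tp(a/M)} \cl\!\bigl(f[\varphi(M) \cap X^m]\bigr)
\]
(resp.\ with $X_m$), which by compactness of $H_m$ is a single point. Given $a,b \in \bar R$, I would choose $m$ large enough that $a,b$ and $a\cdot b$ (and, in the ring case, $a+b$) all lie in $\bar X^m$ (resp.\ $\bar X_m$), apply the formula above to $\tp(a\cdot b/M)$, and push the closure and intersection through the continuous operation on the compact $H_m$ (using that the image of a continuous map on a product of compact sets distributes over intersections of nested closed families) to obtain $\bar f(a\cdot b) = \bar f(a)\cdot \bar f(b)$; the ring case repeats the same argument for both $+$ and $\cdot$.

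For part (2), given a compact $C$ and an open $U$ with $C \subseteq U$, the first step is to use the new hypothesis on $V$ to trap $f^{-1}[C]$ inside a fixed layer. In the group case, pick a symmetric open neighborhood $W$ of $e$ with $W^{-1}W \subseteq V$, cover the compact $C$ by finitely many left translates $g_i W$, and choose $a_i \in f^{-1}[g_iW]$ whenever that preimage is nonempty; then $f^{-1}[g_iW] \subseteq a_i\, f^{-1}[W^{-1}W] \subseteq a_i\, f^{-1}[V] \subseteq a_i X^m$, so $f^{-1}[C] \subseteq X^k$ for some finite $k$. The ring case is the same with additive translates and a layer $X_k$. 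By part (1) the sets $\bar f^{-1}[C] \cap \bar X^k$ and $\bar f^{-1}[H \setminus U] \cap \bar X^k$ are disjoint and $M$-type-definable, so by saturation of $\C$ they can be separated by an $M$-definable subset of $\bar X^k$, whose $M$-points yield the required $M$-definable $Y \subseteq R$ with $f^{-1}[C] \subseteq Y \subseteq f^{-1}[U]$.

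The main technical subtlety is not a deep obstruction but the bookkeeping of the two parallel cases (groups versus rings): one must make sure in the ring case that $\cl(f[X_m])$ really lies in a compact subset of $H$ at every stage, so that the compact-target extension lemma applies uniformly, and that the homomorphism computation is performed separately for $+$ and $\cdot$. Once this is handled, everything else in the argument is routine, and the final assembly of $\bar f$ from the layerwise extensions produces the unique map with the claimed properties.
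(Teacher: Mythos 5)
Your proposal is correct and follows essentially the same route as the paper: layerwise extension of $f|_{X^m}$ (resp.\ $f|_{X_m}$) into the compact sets $H_m$ via the cited extension lemma, the \cite[Proposition 3.4]{GPP}-style computation with the pointwise formula for the homomorphism property, and the identical covering-plus-separation argument for (2). The only point you gloss is the uniqueness of $\bar f$ (and the compatibility of the $\bar f_m$'s): as in the paper, one should note that any extension $\bar f'$ as in (1) satisfies $\bar f'[\bar X^m] \subseteq H_m$ because $\bar f'^{-1}[H_m] \cap \bar X^m$ is $M$-type-definable and contains $X^m$, hence equals $\bar X^m$ — a routine observation, but it is what makes the layerwise uniqueness apply.
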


\begin{proof}
Let us focus on the case of an approximate subgroup $X$; the case of an approximate subring is completely analogous (working with $X_m$ in place of $X^m$).

(1) Let $H_m:=\cl(f[X^m])$. Since $H_1 =\cl(f[X])$ is compact by assumption and $H_m=\cl(f[X]^m)$, we get that $H_m=H_1^m$ is also compact. Therefore, by the assumption of (1), $f |_{X^m} \colon X^m \to H_m$ is a definable map from a definable set to a compact space. So it extends uniquely to an $M$-definable function $\bar f_m \colon \bar X^m \to H_m$, as explained in the last paragraph of Subsection \ref{subsection: model theory}. By the explicit formulas for the $\bar f_m$'s, we see that $\bar f_1 \subseteq \bar f_2 \subseteq \dots$. 
So $\bar f:= \bigcup_m \bar f_m$ is the desired extension of $f$. Its uniqueness follows from the uniqueness of the $\bar f_m$'s after noticing that for any other $\bar f' \colon \RR \to H$ as in (1) we have $\bar f'[\bar X^m] \subseteq H_m$ (which holds as $\bar f'^{-1}[H_m] \cap \bar X^m$ is an $M$-type-definable set containing $X^m$, and so $\bar f'^{-1}[H_m] \cap \bar X^m=\Bar X^m$).

To see that $\bar f$ is a homomorphism, one can apply the argument from \cite[Proposition 3.4]{GPP}. Namely, since $\bar f_m \subseteq \bar f$ and any $a \in  \RR= \langle\bar X \rangle$ belongs to some $\bar X^m$,  we have $\bar f(a)=\bigcap_{\varphi(x) \in \tp(a/M)} \cl (f[\varphi(M) \cap X^m])$. Consider any $a,b \in \RR$. Choose $m$ such that $a,b,ab \in \bar X^m$. Let $p =\tp(a/M)$, $q = \tp(b/M)$, and $r=\tp(ab/M)$. Then 
\begin{eqnarray*}
\{\bar f (ab)\} = \bigcap_{\varphi(x) \in r} \cl (f[\varphi(M) \cap X^m])    \subseteq   \bigcap_{\varphi(x) \in p, \psi(x) \in q} \cl (f[\varphi(M)\cdot \psi(M) \cap X^m])   =   \\
\bigcap_{\varphi(x) \in p, \psi(x) \in q} \cl (f[\varphi(M) \cap X^m] \cdot f[\psi(M) \cap X^m])  = \\
 \bigcap_{\varphi(x) \in p, \psi(x) \in q} \cl (f[\varphi(M) \cap X^m]) \cdot \cl(f[\psi(M) \cap X^m]) =\\
 \bigcap_{\varphi(x) \in p} \cl (f[\varphi(M) \cap X^m])  \cdot \bigcap_{\psi(x) \in q} \cl (f[\psi(M) \cap X^m]) = \{\bar f(a) \bar f(b)\},
\end{eqnarray*}
where the third and fourth equality uses compactness of $H_m$ and  continuity of $\cdot$ (the fourth equality also uses the fact that the families $\{ \varphi(M) \cap X^m: \varphi(x) \in p\}$ and $\{\psi(M) \cap X^m: \psi(x) \in q\}$ are closed under finite intersections).

(2) Consider a compact $C \subseteq H$ and an open $U \subseteq H$ such that $C \subseteq U$. 
Choose a neighborhood $W$ of $e$ such that $W^{-1}W \subseteq V$ (where $V$ is from the assumption in (2)). Since $C$ is compact, $C \subseteq \bigcup_{i<n} g_iW$ for some $n<\omega$ and $g_i \in H$. Hence, $f^{-1}[C] \subseteq \bigcup_{i<n} f^{-1}[g_iW]$. Pick $a_i \in f^{-1}[g_iW]$ (if there is any) for $i<n$. Then  $f^{-1}[g_iW] \subseteq a_i f^{-1}[W^{-1}W] \subseteq a_if^{-1}[V] \subseteq a_iX^m$. So $f^{-1}[C] \subseteq X^k$ for some $k$. On the other hand, by the property of $\bar f$, we have that $\bar f^{-1}[C] \cap \bar X^k$ and $\bar f^{-1}[H \setminus U] \cap \bar X^k$ are disjoint $M$-type-definable sets. So they can be separated by $\bar Y$ for some  ($M$-)definable subset $Y$ of $X^k$. Hence, $f^{-1}[C] \subseteq Y \subseteq f^{-1}[U]$. 
\end{proof}

By a {\em definable locally compact model} of $R$ we mean a definable homomorphism $f \colon R \to S$ for some locally compact group [resp. ring] $S$ such that $f[X]$ is relatively compact in $S$ and there is a neighborhood $U$ of $e$ [resp. $0$] with $f^{-1}[U] \subseteq X^m$ [resp. $f^{-1}[U] \subseteq X_m$] for some $m<\omega$. It is well-known (at least for approximate groups) that for $A \subseteq M$, the quotient map $R \to \RR/\RR^{00}_A$ is a definable locally compact model of $R$. We give a proof below for the readers convenience, and we additionally prove universality of this model for $A:=M$.

\begin{proposition}\label{proposition: locally compact model}
Let $A \subseteq M$ and assume that $\RR^{00}_A$ exists. The quotient map $h \colon R \to \RR/\RR^{00}_A$ is a definable locally compact model, which for $A:=M$ is universal in the sense that for any other definable locally compact model $f \colon R \to S$ there is a unique continuous homomorphism $g \colon \RR/\RR^{00}_M \to S$ such that $f=g \circ h$. 
\end{proposition}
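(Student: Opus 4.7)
My plan is to use Lemma \ref{lemma: extension of definable map} as a bridge, in both directions, between homomorphisms on $R$ and their canonical extensions to $\RR$, together with the logic topology on $\RR/\RR^{00}_A$ described just above. For the first assertion, I would let $\bar h \colon \RR \to \RR/\RR^{00}_A$ be the quotient map, so that $h = \bar h|_R$ is a homomorphism whose image $h[X] \subseteq \bar X/\RR^{00}_A$ lies in a compact set. By saturation of $\C$ applied to $\RR = \bigcup_m \bar X^m$ [resp.\ $\bigcup_m \bar X_m$], the type-definable subgroup $\RR^{00}_A$ sits inside some $\bar X^m$ [resp.\ $\bar X_m$], and then $U := \{a/\RR^{00}_A : a\RR^{00}_A \subseteq \bar X^m\}$ [resp.\ $\{a/\RR^{00}_A : a+\RR^{00}_A \subseteq \bar X_m\}$] is an open neighborhood of the identity with $h^{-1}[U] \subseteq X^m$ [resp.\ $X_m$], verifying the neighborhood condition. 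Definability of $h$ then follows from Lemma \ref{lemma: extension of definable map}(2) applied to $\bar h$, since preimages of closed sets under $\bar h$ are $A$-type-definable (hence $M$-type-definable because $A \subseteq M$) directly from the definition of the logic topology.

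For universality, let $f \colon R \to S$ be another definable locally compact model, with unique homomorphic extension $\bar f \colon \RR \to S$ given by Lemma \ref{lemma: extension of definable map}(1). I would show that $\ker(\bar f)$ is a normal, $M$-type-definable subgroup [two-sided ideal] of $\RR$ of bounded index, which by minimality forces $\RR^{00}_M \subseteq \ker(\bar f)$. Bounded index is immediate from the formula $\bar f(a) = \bigcap_{\varphi(x) \in \tp(a/M)} \cl(f[\varphi(M) \cap X^m])$ obtained in the proof of Lemma \ref{lemma: extension of definable map}(1), since it shows $\bar f(a)$ depends only on $\tp(a/M)$, giving $[\RR : \ker(\bar f)] \leq 2^{|\mathcal{L}|+|M|}$. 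The key point is to see that $\ker(\bar f) \subseteq \bar X^m$ for some $m$, after which Lemma \ref{lemma: extension of definable map}(1) yields $M$-type-definability of $\ker(\bar f)$ as $\ker(\bar f) \cap \bar X^m$. For this I would fix a neighborhood $U$ of the identity in $S$ with $f^{-1}[U] \subseteq X^m$ and, using local compactness and the Hausdorff convention, a smaller open $U'$ with $\cl(U') \subseteq U$: if $a \in \bar X^n$ satisfies $\bar f(a) \in \cl(U')$, then the formula above gives some $\varphi(x) \in \tp(a/M)$ with $\cl(f[\varphi(M) \cap X^n]) \subseteq U$, hence $\varphi(M) \cap X^n \subseteq X^m$, and by elementarity $a \in \bar X^m$. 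In particular $\ker(\bar f) \subseteq \bar f^{-1}[\cl(U')] \subseteq \bar X^m$.

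With $\ker(\bar f) \supseteq \ker(\bar h) = \RR^{00}_M$ in hand, there is a unique abstract group/ring homomorphism $g \colon \RR/\RR^{00}_M \to S$ with $\bar f = g \circ \bar h$, and restricting to $R$ gives $f = g \circ h$. Continuity of $g$ is equivalent to $\bar f^{-1}[C] \cap \bar X^n$ being $M$-type-definable for every closed $C \subseteq S$ and every $n$, which is exactly Lemma \ref{lemma: extension of definable map}(1). Uniqueness of $g$ follows once I show $h[R]$ is dense in $\RR/\RR^{00}_M$, which I would verify directly: given a non-empty open $V \subseteq \RR/\RR^{00}_M$ and $a \in \bar h^{-1}[V] \cap \bar X^n$, write $\bar h^{-1}[\RR/\RR^{00}_M \setminus V] \cap \bar X^n$ as $\bigcap_i \bar D_i$ for $M$-definable $D_i \subseteq X^n$; then $a \notin \bar D_i$ for some $i$, by elementarity there is $r \in X^n \setminus D_i \subseteq R$, and $r \notin \bar D_i$ forces $h(r) \in V$. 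I expect the main technical point to be the containment $\ker(\bar f) \subseteq \bar X^m$, where one has to convert the local neighborhood condition on $f$ at the level of $M$ into a global statement about $\bar f$ on $\RR$, essentially via the explicit formula for the extension together with the definability of $f$.
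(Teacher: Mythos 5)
Your overall route coincides with the paper's (relative compactness of $h[X]$, the neighborhood $U$ with $h^{-1}[U]\subseteq X^m$, extension via Lemma \ref{lemma: extension of definable map}, the kernel argument, factorization, and density), and the universality half is sound: your argument that $\ker(\bar f)\subseteq \bar X^m$ via a smaller neighborhood $U'$ with $\cl(U')\subseteq U$ and directedness/compactness, the bound on the index from the fact that $\bar f(a)$ depends only on $\tp(a/M)$, and the continuity of $g$ all work. The genuine gap is in your justification of the definability of $h$ (and it recurs in your density argument). You claim that preimages under $\bar h$ of closed subsets of $\RR/\RR^{00}_A$ are $A$-type-definable ``directly from the definition of the logic topology''. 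That is not what the definition gives: a set is closed iff its preimage meets each $\bar X^m$ [resp.\ $\bar X_m$] in a set that is type-definable over some unspecified small parameter set. In fact such preimages need not be $A$-invariant at all for general $A\subseteq M$: the preimage of a single point $a/\RR^{00}_A$ is the coset $a+\RR^{00}_A$, and an automorphism fixing only $A$ can move $a$ into a different coset when $A$ is not a model. So the statement you actually need --- that $\bar h^{-1}[C]\cap\bar X^m$ is $M$-type-definable, which is exactly the hypothesis of Lemma \ref{lemma: extension of definable map}(2) and also what lets you write $\bar h^{-1}[(\RR/\RR^{00}_M)\setminus V]\cap\bar X^n$ as an intersection of $M$-definable sets in the density step --- requires an argument, not just the definition.

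The missing argument is the one the paper supplies at precisely this point: the relation of lying in the same coset of $\RR^{00}_A$ is a bounded, $M$-invariant (since $A\subseteq M$) equivalence relation on $\RR$, and equality of types over the \emph{model} $M$ refines every bounded $M$-invariant equivalence relation; hence each coset of $\RR^{00}_A$ is setwise $M$-invariant, so any union of cosets, in particular $\bar h^{-1}[C]$, is $M$-invariant, and an $M$-invariant type-definable set is $M$-type-definable. With this inserted, your appeal to Lemma \ref{lemma: extension of definable map}(2) and your density argument go through, and the remainder of your proof is essentially the paper's.
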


\begin{proof}
We skip the proof that $\RR/\RR^{00}_A$ is a locally compact group [ring] (see the discussion after Proposition \ref{proposition: side is irrelevant}). Since $h[X] \subseteq \bar X/\RR^{00}_A$ and the last set is compact, we get that $h[X]$ is relatively compact. As remarked above, if we choose $m$ with $\RR^{00}_A \subseteq \bar X^m$ [resp.  $\RR^{00}_A \subseteq \bar X_m$], then $U:=\{a/\RR^{00}_A: a\RR^{00}_A \subseteq \bar X^m\}$ [resp. $U:=\{a/\RR^{00}_A: a+\RR^{00}_A \subseteq \bar X_m\}$] is an open neighborhood of $e$ [resp. $0$]; and clearly $h^{-1}[U] \subseteq X^m$ [resp.  $h^{-1}[U] \subseteq X^m$].   To show definability of $h$, consider any compact $C \subseteq \RR/\RR^{00}_A$ and open $V \subseteq \RR/\RR^{00}_A$ such that $C \subseteq V$. Let $\bar h \colon \RR \to \RR/\RR^{00}_A$ be the quotient map. Then $\bar h^{-1}[C]$ is a type-definable subset of some $\bar X^n$ which is disjoint from the type-definable set $\bar h^{-1}[(\RR/\RR^{00}_A) \setminus V] \cap \bar X^n$. It remains to show that these disjoint sets are $M$-invariant, as then they are $M$-type-definable, so, being disjoint, they can be separated by $\bar Y$ for some definable subset $Y$ of $X^n$; then clearly $h^{-1}[C] \subseteq Y \subseteq h^{-1}[V]$, as required. The fact that these type-definable sets are $M$-invariant follows from the fact that the relation of having the same type over $M$ is the finest $M$-invariant, bounded  equivalence relation on  $\bar R$ and so it refines the relation of lying in the same coset of $\RR^{00}_A$ (as $A \subseteq M$).

Observe that $R/\RR^{00}_A$ is dense in $\RR/\RR^{00}_A$. Indeed, take a non-empty open $V \subseteq \RR/\RR^{00}_A$. Then the preimage $\bar h^{-1}[V]$ is a union of definable sets; and, as above we can find these sets to be definable over $M$. Since this union is non-empty, at least one of these $M$-definable sets is non-empty, and so it intersects $R$, which shows that $V \cap (R/\RR^{00}_A)$ is non-empty. By the density of $R/\RR^{00}_A$, uniqueness in the universal property becomes clear.

For the existence, consider any definable locally compact model $f \colon R \to S$. By Lemma \ref{lemma: extension of definable map}, $f$ extends to a homomorphism $\bar f \colon \RR \to S$ such that $\bar f^{-1}[C] \cap \bar X^k$ [resp. $\bar f^{-1}[C] \cap \bar X_k$] is $M$-type-definable for every $k$ and for every closed $C \subseteq S$. By the definition of locally compact models, there is an open  neighborhood $U$ of $0$ in $S$ with $f^{-1}[U] \subseteq X^n$ [resp.  $f^{-1}[U] \subseteq X_n$] for some $n$. Since $\bar f^{-1}[U]$ is a union of $M$-definable sets, we conclude that $\bar f^{-1}[U] \subseteq \bar X^n$ [resp. $\bar f^{-1}[U] \subseteq \bar X_n$]. Hence, 
$\ker (\bar f) \subseteq \bar X^n$ [resp. $\bar X_n$], and so we get that $\ker(\bar f)$ is an $M$-type-definable subgroup [two-sided ideal] of $\RR$ of bounded index (it is bounded by the cardinality of the closure of $f[R]$ in $S$, so by $2^{2^{|R|}}$). Therefore, $\RR^{00}_M \subseteq \ker(\bar f)$, and so $\bar f$ factors through the quotient map $\bar h \colon \RR \to \RR/\RR^{00}_M$, i.e. there is a homomorphism $g \colon  \RR/\RR^{00}_M \to S$ such that $\bar f  = g \circ \bar h$. Since $h \subseteq \bar h$ and $f \subseteq \bar f$, we get $f=g \circ h$. It remains to check that $g$ is continuous. Take any closed $C \subseteq S$. Then $\bar h^{-1}[g^{-1}[C]] = \bar f^{-1}[C]$ has type-definable intersections with all the $\bar X^m$'s [resp. $\bar X_m$'s]. Therefore, $g^{-1}[C]$ is closed by the definition of the logic topology.
\end{proof}

\begin{corollary}\label{corollary: existence of locally compact models}
The existence of a definable locally compact model of $X$ is equivalent to the existence of $\RR^{00}_M$.
\end{corollary}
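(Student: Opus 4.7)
The plan is to deduce both implications directly from Proposition \ref{proposition: locally compact model} and Lemma \ref{lemma: extension of definable map}, with no genuinely new construction needed.  The nontrivial direction reduces to showing that the kernel of any definable locally compact model, once extended to $\RR$, is an $M$-type-definable subgroup [two-sided ideal] of $\RR$ of bounded index; the corollary then follows from the standard observation that the presence of a single such substructure forces the smallest one, $\RR^{00}_M$, to exist.

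For the direction assuming that $\RR^{00}_M$ exists, I would simply invoke Proposition \ref{proposition: locally compact model} with $A := M$: the quotient map $h \colon R \to \RR/\RR^{00}_M$ is already exhibited there as a definable locally compact model.  For the converse, start with a definable locally compact model $f \colon R \to S$ and use Lemma \ref{lemma: extension of definable map}(1) to extend it to a homomorphism $\bar f \colon \RR \to S$ whose preimages of closed subsets of $S$, intersected with $\bar X^m$ [resp. $\bar X_m$], are $M$-type-definable.  Replaying the argument from the existence portion of the proof of Proposition \ref{proposition: locally compact model}, pick a neighborhood $U$ of the identity in $S$ with $f^{-1}[U] \subseteq X^n$ [resp. $X_n$]; since $\bar f^{-1}[U]$ is a union of $M$-definable sets, it is contained in $\bar X^n$ [resp. $\bar X_n$], forcing $\ker(\bar f) \subseteq \bar X^n$ [resp. $\bar X_n$].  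Hence $\ker(\bar f) = \bar f^{-1}[\{e\}] \cap \bar X^n$ [resp. $\bar f^{-1}[\{0\}] \cap \bar X_n$] is an $M$-type-definable subgroup [two-sided ideal] of $\RR$, whose index is bounded by the cardinality of the closure of $f[R]$ in $S$.

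To conclude, I would invoke the general principle recalled at the start of Section \ref{section: components of approximate rings}: once any $M$-type-definable subgroup [two-sided ideal] of $\RR$ of bounded index is available, the intersection of all such substructures has bounded index (at most $2^{|\mathcal{L}|+|M|}$), remains $M$-type-definable (only boundedly many genuinely distinct ones occur, since the family of partial types over $M$ has bounded cardinality), and is by construction the smallest, hence equals $\RR^{00}_M$.  The ring version transfers verbatim from the group version stated in the text, with ``subgroup'' replaced by ``two-sided ideal''.  I do not anticipate any real obstacle; the corollary is essentially a clean repackaging of Lemma \ref{lemma: extension of definable map} and Proposition \ref{proposition: locally compact model}.
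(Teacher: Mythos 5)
Your proposal is correct and follows essentially the same route as the paper: one direction is Proposition \ref{proposition: locally compact model} with $A:=M$, and the converse repeats the last paragraph of that proposition's proof to show $\ker(\bar f)$ is an $M$-type-definable subgroup [two-sided ideal] of bounded index, whence $\RR^{00}_M$ exists by the intersection argument already noted at the start of Section \ref{section: components of approximate rings}. The only difference is that you spell out details the paper handles by citation.
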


\begin{proof}
If $\RR^{00}_M$ exists, then the quotient map  $R \to \RR/\RR^{00}_M$ is a definable locally compact model. Conversely, if  $f \colon R \to S$ is a definable locally compact model, then, by the last paragraph of the proof of Proposition \ref{proposition: locally compact model}, $\ker(\bar f)$ is an $M$-type-definable subgroup [two-sided ideal] of $\RR$ of bounded index. Hence, $\RR^{00}_M$ exists.
\end{proof}

The next fact is folklore, but it is important, as it shows that the existence of a locally compact model can be seen as a structural result about the approximate subgroup [subring] in question. It will be also used in the proof of Lemma \ref{lemma: criterion for definable subring}.

By a {\em generic subset} of the subgroup [subring] $\langle X \rangle$ we mean a subset whose finitely many left translates [additive translates] cover $X$.


\begin{fact}\label{fact: preimage of open is generic}
Let $f \colon \langle X \rangle \to S$ be locally compact model of $X$. Then:
\begin{enumerate}
\item $f^{-1}[U]$ is generic for every neighborhood $U$ of $e$ [resp. $0$] in $S$;
\item $f^{-1}[U]$ is [additively] commensurable with $X$ for every relatively compact neighborhood $U$ of $e$ [resp. $0$] in $S$.
\end{enumerate}
\end{fact}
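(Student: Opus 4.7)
The plan is to deduce both statements from a standard compactness argument in $S$ combined with the defining property of a locally compact model. I treat the group case explicitly; the ring case is identical with multiplication replaced by addition, $X^m$ by $X_m$, and ``left translate'' by ``additive translate''.

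For (1), I would first choose a symmetric open neighborhood $W$ of $e$ with $W \cdot W \subseteq U$, possible by continuity of the group operation. Since $f[X]$ is relatively compact, its closure $K := \overline{f[X]}$ is compact, and the family $\{yW : y \in K\}$ is an open cover, so it admits a finite subcover $y_1 W, \ldots, y_n W$. For each $i$, the set $y_i W$ meets $f[X]$, so pick $a_i \in X$ with $f(a_i) \in y_i W$; then $y_i W \subseteq f(a_i) W^{-1} W = f(a_i) W W \subseteq f(a_i) U$. Hence $f[X] \subseteq \bigcup_{i=1}^n f(a_i) U$, and translating back one gets $X \subseteq \bigcup_{i=1}^n a_i f^{-1}[U]$. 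Since each $a_i \in X \subseteq \langle X \rangle$, this establishes genericity of $f^{-1}[U]$.

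For (2), one direction, that finitely many left translates of $f^{-1}[U]$ cover $X$, is immediate from (1) applied to the neighborhood $U$ of $e$. For the reverse direction, I would use the defining property of a locally compact model to fix a neighborhood $V$ of $e$ with $f^{-1}[V] \subseteq X^m$, then shrink to a symmetric open neighborhood $W$ of $e$ with $W \cdot W \subseteq V$. Since $U$ is relatively compact, $\overline{U}$ is compact and is covered by finitely many translates $s_1 W, \ldots, s_n W$. For each $i$ with $f^{-1}[s_i W]$ non-empty, pick $a_i$ in it; then for any $b \in f^{-1}[s_i W]$, the quotient $f(a_i)^{-1} f(b) \in W^{-1} W = W \cdot W \subseteq V$, so $a_i^{-1} b \in f^{-1}[V] \subseteq X^m$ and $f^{-1}[s_i W] \subseteq a_i X^m$. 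Combining, $f^{-1}[U] \subseteq f^{-1}[\overline{U}] \subseteq \bigcup_{i} a_i X^m$. Since $X^m$ is covered by finitely many left translates of $X$ (from iterating the approximate subgroup relation $X \cdot X \subseteq F X$, or from Fact~\ref{fact: by Lemma 5.5 of Bru} in the ring case), the desired additive commensurability of $f^{-1}[U]$ with $X$ follows.

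No step looks genuinely difficult; the only point to watch is the preliminary passage to a symmetric neighborhood $W$ with $W \cdot W \subseteq V$ (resp.\ $W + W \subseteq V$) before choosing the representatives $a_i$, so that the comparison $f(a_i)^{-1} f(b)$ (resp.\ $f(b) - f(a_i)$) actually lands in the prescribed $V$. The representatives $a_i$ automatically lie in $\langle X \rangle$, in (1) because they are chosen in $X$ itself, and in (2) because they are chosen in the preimage of a point of $S$ under $f$, whose domain is $\langle X \rangle$.
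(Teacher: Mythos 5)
Your proof is correct and follows essentially the same route as the paper: a compactness argument covering $\cl(f[X])$ (resp.\ $\cl(U)$) by translates of a small neighborhood $W$ with $W^{-1}W\subseteq U$ (resp.\ $\subseteq V$), picking preimage points as translating elements, and using the approximate-subgroup/subring covering property (Fact~\ref{fact: by Lemma 5.5 of Bru} in the ring case). The paper merely outsources the two covering computations to the proof of Lemma~\ref{lemma: extension of definable map}(2), which you have reproduced explicitly.
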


\begin{proof}
Let us focus on the group case. The ring case is almost the same (in (2) one should additionally use Fact \ref{fact: by Lemma 5.5 of Bru}).

(1)  Choose a neighborhood $W$ of $e$ such that $W^{-1}W \subseteq U$.  Since the closure $\cl(f[X])$ is compact, $\cl(f[X]) \subseteq \bigcup_{i<n}g_iW$ for some $n \in \mathbb{N}$ and $g_0,\dots,g_{n-1} \in S$. For every $i<n$ pick $a_i \in f^{-1}[g_iW]$ (if there is any). The computation from the proof of Lemma \ref{lemma: extension of definable map}(2)  shows that $X \subseteq f^{-1}[\cl(f[X])] \subseteq  \bigcup_{i<n}a_i f^{-1}[U]$, so $f^{-1}[U]$ is generic.

(2) Since $\cl(U)$ is compact, the computation  from the proof of Lemma \ref{lemma: extension of definable map}(2)  shows that $f^{-1}[U] \subseteq X^k$ for some $k$. Therefore, $f^{-1}[U]$ is covered by finitely many left translates of $X$. The fact that $X$ is covered by finitely many left translates of $f^{-1}[U]$ follows from (1).
\end{proof}

We finish this section with an observation that if we want to have the existence of a locally compact model, we should work with our definition of an approximate subring.

\begin{remark}
Let $X$ be a symmetric subset of a group [ring], which is not assumed to be an approximate subgroup [approximate subring]. Suppose that there is $f \colon \langle X \rangle \to S$ which satisfies the definition of locally compact model of $X$. Then $X^m$ [resp. $X_m$] is an approximate subgroup [subring] for some $m$. 
\end{remark}

\begin{proof}
Let us do the group case. By definition, there is a neighborhood $U$ of $e$ in $S$ such that $f^{-1}[U] \subseteq X^m$ for some $m$. Since $\cl (f[X])$ is compact, we get that $\cl (f[X^{2m}]) = \cl(f[X])^{2m}$ is compact. Thus, the argument from the proof of Fact \ref{fact: preimage of open is generic}(1) shows that $X^{2m}$ is covered by finitely many left translates of $f^{-1}[U]$. Therefore, $X^{2m}$ is covered by finitely many left translates of $X^m$.
\end{proof}

\section{Generating in $1\frac{1}{2}$ steps and a locally compact model}\label{section: main results}

Here, we prove the main results of this paper, answering the main question from \cite{KrRz} and providing locally compact models for arbitrary definable approximate subrings (so, in particular, abstract approximate subrings by taking the full structure). The goal is to prove:

\begin{theorem}\label{theorem: main theorem}
Let $X$ be a 0-definable (in $M$) approximate subring, $R:=\langle X \rangle$, $\bar R = \langle \bar X \rangle$, and let  $A \subseteq \C$ be a small set of parameters. Then $(\bar R,+)^{00}_A + \bar R  \cdot (\bar R,+)^{00}_A=\RR^{000}_A$. 
Moreover, if $R \subseteq \dcl(A)$, then $\RR^{00}_A$ exists and equals $\RR^{000}_A = (\RR,+)^{00}_A +\bar X (\RR,+)^{00}_A$.
\end{theorem}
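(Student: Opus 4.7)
Set $H := (\bar R, +)^{00}_A$; this exists because the additive group $(\bar R, +)$ is abelian, hence definably amenable, so Massicot--Wagner (Fact \ref{fact: main consequence of MaWa and Mas}) applies. Let $J := H + \bar R \cdot H$, where $\bar R \cdot H$ denotes the additive subgroup of $\bar R$ generated by the products $\{rh : r \in \bar R,\ h \in H\}$. The plan for the first equality is to sandwich $J = \RR^{000}_A$. For the inclusion $\RR^{000}_A \subseteq J$: $J$ is an $A$-invariant additive subgroup of $\bar R$ of bounded index ($A$-invariance because $H$ is $A$-type-definable and $\bar R$ is $0$-invariant; bounded index because $J \supseteq H$), and it is a left ideal by the direct computation $\bar R \cdot J \subseteq \bar R\cdot H + \bar R\cdot \bar R\cdot H \subseteq \bar R\cdot H \subseteq J$; Proposition \ref{proposition: side is irrelevant} -- which characterizes $\RR^{000}_A$ equivalently as the smallest $A$-invariant \emph{left} ideal of $\bar R$ of bounded index -- then yields $\RR^{000}_A \subseteq J$.

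For the reverse inclusion $J \subseteq \RR^{000}_A$ it is enough to show $H \subseteq \RR^{000}_A$, since then $\bar R\cdot H \subseteq \bar R\cdot \RR^{000}_A \subseteq \RR^{000}_A$ by the left-ideal property, giving $J \subseteq \RR^{000}_A$. I would deduce $H \subseteq \RR^{000}_A$ from the abelian-approximate-subgroup analog of the classical equality $G^{00}=G^{000}$ for amenable definable groups: since $(\bar R, +)$ is abelian, $(\bar R, +)^{00}_A = (\bar R, +)^{000}_A$ (this equality in the amenable approximate-subgroup setting is available from the analysis in \cite{HKP} and can also be extracted directly from the Massicot--Wagner averaging construction). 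As $\RR^{000}_A$ is itself an $A$-invariant additive subgroup of $(\bar R,+)$ of bounded index, it contains the smallest such, namely $(\bar R,+)^{000}_A = H$.

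For the Moreover statement, assume $R \subseteq \dcl(A)$ and consider the candidate $I := H + \bar X \cdot H$. A standard saturation argument shows $I$ is $A$-type-definable: writing $H$ as the realization set of an $A$-partial type $\pi$, one has $\bar X \cdot H = \bigcap \{\bar X \cdot \Phi(\bar R) : \Phi \subseteq \pi \text{ finite}\}$, and each $\bar X \cdot \Phi(\bar R)$ is $A$-definable by an existential formula, using that $\bar X$ is $0$-definable. Clearly $I \subseteq J = \RR^{000}_A$, is $A$-invariant, and has bounded index. It remains to verify that $I$ is a two-sided ideal of $\bar R$: once this is known, the minimality of $\RR^{000}_A$ among $A$-invariant two-sided ideals of bounded index forces $\RR^{000}_A \subseteq I$, so that $I = \RR^{000}_A$; since $I$ is additionally $A$-type-definable, this simultaneously yields the existence of $\RR^{00}_A$ and the equalities $\RR^{00}_A = \RR^{000}_A = I$.

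The principal technical obstacle -- and the place where the assumption $R \subseteq \dcl(A)$ enters essentially -- is precisely this absorption $\bar R \cdot H \subseteq I$, equivalently $\bar X_m \cdot H \subseteq H + \bar X \cdot H$ for every $m$. The intended strategy combines (i) that under $R \subseteq \dcl(A)$ every translate $r \cdot H$ with $r \in R$ is $A$-type-definable and visibly contained in $I$; (ii) the density of the image of $R$ inside the logic-topological quotient $\bar R/H$ (analogous to the density established in Proposition \ref{proposition: locally compact model}); and (iii) the combinatorial control of the iterates $\bar X_m$ afforded by Fact \ref{fact: by Lemma 5.5 of Bru}, allowing one to propagate the inclusion up the entire $\bar X_m$-tower.
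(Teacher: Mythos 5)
There is a genuine gap, and it starts with the reading of the statement. In this paper (as in \cite{KrRz}), $\bar R \cdot (\bar R,+)^{00}_A$ denotes the \emph{set of products} $\{rh : r \in \bar R,\ h \in H\}$, not the additive subgroup they generate: the whole content of ``generating an ideal in $1\frac{1}{2}$ steps'' is that the set $H + \bar R\cdot H$ is \emph{already} closed under addition, hence an ideal. By redefining $\bar R\cdot H$ as the generated subgroup you make the first equality essentially trivial (your two inclusions then follow from side-irrelevance and $(\RR,+)^{00}_A=(\RR,+)^{000}_A$, exactly as you write), but you have proved a strictly weaker statement. In particular it no longer answers Question 1.3 of \cite{KrRz}, and it no longer gives the containment $\RR^{00}_M \subseteq 4\bar X + \bar X\cdot 4\bar X$ that Corollary \ref{corollary: locally compact model exists} and the applications rely on. The additive closure of $H + \RR H$ is precisely where the paper's work lies: it is extracted from Lemma \ref{lemma: main technical lemma}, whose proof is a delicate counting argument using thick sets (Remarks \ref{remark: thick} and \ref{remark: intersection of thick sets}), the coset-(in)dependence dichotomy (Remark \ref{remark: coset independent}), and a minimality choice of the numbers $n_{i,m}$; nothing in your proposal substitutes for this step.

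The ``moreover'' part has the same problem in a second place. Your candidate $I := H + \bar X\cdot H$ (product set, as your type-definability-via-projection argument requires) must be shown closed under addition and to absorb $\bar R$ on the left, and you explicitly leave this as ``the principal technical obstacle'' with only a three-point strategy. The paper resolves it by first knowing (from the hard first part) that $H + \RR H$ is a group, and then proving the \emph{set} equality $H + \RR H = H + \bar X H$: by Fact \ref{fact: by Lemma 5.5 of Bru} one writes $\RR = Y + \bar X$ with $Y \subseteq R$ countable, and for $y \in Y \subseteq R \subseteq \dcl(A)$ the map $l_y(t)=yt$ is $A$-definable on $\bar X_m \supseteq H$, so $l_y[H] = (l_y[\RR],+)^{00}_A \leq H$, whence $\RR H \subseteq YH + \bar X H \subseteq H + \bar X H$. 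Your ingredients (Massicot--Wagner for the existence of $H$, Fact \ref{fact: 00=000 for def. amen.}, Proposition \ref{proposition: side is irrelevant}) match the paper's, but without an argument for additive closure of the product-set sum the proposal does not prove the theorem as stated.
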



First of all, we have

\begin{fact}\label{fact: main consequence of MaWa and Mas}
If $Z$ is a definably amenable 0-definable (in $M$) approximate subgroup, then $\langle \bar Z \rangle^{00}_A$ exists (where $\langle \bar Z \rangle$ is the group generated by $\bar Z$). Moreover,  $\langle \bar Z \rangle^{00}_A \subseteq \bar Z^8$, and if $A=M$, then $\langle \bar Z \rangle^{00}_A \subseteq \bar Z^4$. In particular, $(\RR,+)^{00}_A$ exists and is contained in $8\bar X$, and if $A=M$, then it is contained in $4\bar X$.
\end{fact}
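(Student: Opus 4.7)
The plan is to treat the case $A = M$ separately from the general $A$, since the two bounds $\bar Z^4$ and $\bar Z^8$ reflect this split; the ``in particular'' clause will then follow by specialization.

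For $A = M$, I would invoke \cite[Theorem 12]{MaWa} directly. Definable amenability of $Z$ supplies a finitely additive, normalized, $\Aut(\C/M)$-invariant measure $\mu$ on $M$-definable subsets of $\langle \bar Z \rangle$ with $\mu(\bar Z) = 1$. The Massicot--Wagner argument (a Sanders--Croot--Sisask type construction driven by $\mu$) produces a descending sequence $(D_n)_{n < \omega}$ of $M$-definable symmetric subsets of $\bar Z^4$, each generic in $\bar Z$, with $D_{n+1} \cdot D_{n+1} \subseteq D_n$. The intersection $\bigcap_n \bar D_n$ is then an $M$-type-definable, bounded-index subgroup of $\langle \bar Z \rangle$ contained in $\bar Z^4$, witnessing that $\langle \bar Z \rangle^{00}_M$ exists and satisfies $\langle \bar Z \rangle^{00}_M \subseteq \bar Z^4$.

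For a general small $A \subseteq \C$, the first step is the elementary observation that $\Aut(\C/A \cup M) \subseteq \Aut(\C/M)$, so $\mu$ is automatically invariant over $A \cup M$. Hence MaWa applies over $A \cup M$ and delivers an $(A \cup M)$-type-definable, bounded-index subgroup $H$ of $\langle \bar Z \rangle$ with $H \subseteq \bar Z^4$. The main obstacle is the parameter descent from $A \cup M$ down to $A$. My proposal is to set $K := \bigcap_{\sigma \in \Aut(\C/A)} \sigma(H)$, which is visibly an $A$-invariant subgroup contained in $\bar Z^4$ (as $\sigma(\bar Z^4) = \bar Z^4$ because $Z$ is $0$-definable). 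The two things to check are that $K$ has bounded index and that $K$ is $A$-type-definable; the latter follows by a standard saturation argument identifying $K$ with the set of $g$ satisfying $p(g, \bar m^{\ast})$ for every $\bar m^{\ast} \models \tp(M/A)$, where $p$ is the partial type defining $H$ over $M$. For bounded index, I would either exploit that only boundedly many distinct $\sigma(H)$ can arise, controlled by the existence of $\langle \bar Z \rangle^{000}_{A \cup M}$ from \cite[Proposition 4.3]{HKP}, or fall back to a product variant $H \cdot \sigma(H)$ for a single $\sigma$: this is a subgroup by normality of type-definable bounded-index subgroups and lies in $\bar Z^4 \cdot \bar Z^4 = \bar Z^8$, accounting for the weaker constant appearing in the statement.

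Finally, for the ``in particular'' clause I would specialize $Z := X$ viewed as an approximate subgroup of $(\bar R, +)$; being abelian, $(\bar R, +)$ is amenable, so $X$ is definably amenable. The main part yields an $A$-type-definable, bounded-index subgroup $H$ of the additive group $\langle \bar X \rangle_{+}$ generated by $\bar X$, contained in $8 \bar X$ (respectively $4 \bar X$ for $A = M$). Since $\bar R = \bigcup_n \bar X_n$ with each $\bar X_n$ covered by finitely many additive translates of $\bar X$ by Fact \ref{fact: by Lemma 5.5 of Bru}, we have $[\bar R : \langle \bar X \rangle_{+}] \leq \aleph_0$, so the same $H$ remains bounded-index in $(\bar R, +)$, giving the claimed existence of $(\bar R, +)^{00}_A$ and the stated containments.
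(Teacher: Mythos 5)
Your $A=M$ case and the ``in particular'' reduction (abelian $\Rightarrow$ amenable $\Rightarrow$ definably amenable, then countable index of the additive group generated by $\bar X$ in $(\RR,+)$) follow the paper's route, modulo two small inaccuracies: definable amenability gives a measure invariant under left \emph{translations} (on all definable subsets of $\langle \bar Z\rangle$, not just the $M$-definable ones), not an $\Aut(\C/M)$-invariant one; and \cite[Theorem 12]{MaWa} by itself does not guarantee that the sets $D_n$ are $M$-definable --- the paper explicitly supplements it with a compactness argument from the proof of Claim 1 of \cite{KrPi} to push the parameters into $M$ and get the $\bar Z^4$ bound. These are repairable.

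The genuine gap is your descent from $A\cup M$ to $A$. The intersection $K=\bigcap_{\sigma\in\Aut(\C/A)}\sigma(H)$ ranges over unboundedly many conjugates (one for each realization of $\tp$ of the parameter tuple over $A$), so neither bounded index nor type-definability over a small set follows; the ``standard saturation argument'' you invoke needs the family of conjugates to be bounded, which is exactly what is missing. The appeal to $\langle \bar Z\rangle^{000}_{A\cup M}$ does not help: that group is only $A\cup M$-invariant, so it is not contained in the conjugates $\sigma(H)$ (one only gets $\sigma(H)\supseteq \langle\bar Z\rangle^{000}_{A\cup\sigma(M)}$, and the intersection of these is again not known to have bounded index). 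The fallback $H\cdot\sigma(H)$ for a single $\sigma$ is neither $A$-invariant nor $A$-type-definable, and the normality claim used to see it is a subgroup is unjustified (arbitrary type-definable bounded-index subgroups need not be normal). In the paper the $\bar Z^8$ bound for arbitrary small $A$ is not obtained by any such soft conjugation trick: it is \cite[Theorem 5.2]{Mas}, where the Massicot--Wagner construction itself is redone with control of the parameter set, at the cost of doubling the exponent from $4$ to $8$. So the general-$A$ statement should be cited from there (or proved by reworking the construction), rather than deduced from the $A\cup M$ case.
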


\begin{proof}
The part concerning $Z$ follows from \cite[Theorem 12 or Corollary 13]{MaWa} and \cite[Theorem 5.2]{Mas}. If we work with $A=M$, then instead of \cite[Theorem 5.2]{Mas}, an easy compactness argument from the proof of Claim 1 of \cite{KrPi} in Case 2 (on page 1282) can be used to make sure that the parameters are taken from $M$, and it gives us $\langle \bar Z \rangle^{00}_M \subseteq \bar Z^4$.  For the second part (concerning $R$), note that since the additive group generated by $\bar X$, say $G$, is abelian and so amenable, by Lemma 6.1  of the first arXiv version of \cite{Hru2} or \cite[Proposition 5.8]{Ma}, we get that $\bar X$ is an amenable (and so definably amenable) approximate subgroup, hence $G^{00}_A$ exists and satisfies the desired inclusions by the first part. Then $G^{00}_A = (\RR,+)^{00}_A$, because $G$ is of bounded (even countable) index in $(\RR,+)$ by Fact \ref{fact: by Lemma 5.5 of Bru}.
\end{proof}

We will need the notion of thick subset of $\bar R$, as given in \cite[Definition 4.1]{HKP}. 

\begin{definition}\label{definition: thick}
A definable, additively symmetric subset $D$ of $\bar R$ is {\em thick} if for every sequence $(r_i)_{i<\lambda}$ of unbounded length which consists of elements of $\bar R$ there are $i<j<\lambda$ with $r_j -r_i \in D$.
\end{definition}

Using compactness, one gets 

\begin{remark}\label{remark: thick} A definable, additively symmetric subset $D$ of $\RR$ is thick if and only if  for every $m \in \omega$ there exists a positive integer $M$ such that for every $r_0,\dots,r_{M-1} \in \bar X_m$ there are $i<j<M$ with $r_j -r_i\in D$. For any $M$ with this property, 
we will say that $D$ is {\em $M$-thick} in $\bar X_m$.
\end{remark}

Using this remark together with finite Ramsey theorem (exactly as in the proof of \cite[Lemma 1.2]{Gis2}), we get that the class of thick subsets of $\bar R$ is closed under finite intersections. Remark \ref{remark: thick} also implies that in Definition \ref{definition: thick} the adjective ``unbounded'' can be replaced by ``uncountable''. 

The following basic observation will be crucial in the proof of the main lemma below. 
From now on, in this section, $H:=(\bar R,+)^{00}_A$.

\begin{remark}\label{remark: intersection of thick sets}
Every definable, additively symmetric subset of $\bar R$ which contains $H$ is thick. Thus, $H$ is the intersection of a downward directed family of $A$-definable thick subsets of $\RR$.
\end{remark}

\begin{proof}
Since $[\RR: H] \leq 2^{|\mathcal{L}|+|A|}$, we have that for any $\lambda >2^{|\mathcal{L}|+|A|}$, for every sequence $(r_i)_{i<\lambda}$ of elements of $\bar R$ there are $i<j<\lambda$ with $r_j -r_i \in H$. Hence, the same is true for any superset of $H$, and so all definable, additively symmetric supersets of $H$ are thick. The second part follows from that, since $H$ is clearly the intersection of the family of all  $A$-definable, additively symmetric subsets of $\RR$ containing $H$ and this family is downward directed.
\end{proof}

We will also  need the following definition and remark from \cite{KrRz}.

	\begin{definition}
		\label{def:coset_indep}
		We will say that two subgroups $H_1$ and $H_2$ of an abelian group $G$ are \emph{coset-independent} if any coset of $H_1$ intersects any coset of $H_2$. They are \emph{coset-dependent} if they are not coset-independent.
	\end{definition}
	
	\begin{remark}\label{remark: coset independent}
		Let $G$ be an abelian group and $H_1,H_2 \leq G$. The following conditions are equivalent.
		\begin{enumerate}[label=(\roman*)]
			\item $H_1$ and $H_2$ are coset-independent.
			\item $H_1$ intersects any coset of $H_2$.
			\item $H_1+H_2 =G$.
		\end{enumerate}
		Thus, $H_1$ and $H_2$ are coset-dependent if and only if $H_1+H_2$ is a proper subgroup of $G$.
	\end{remark} 

The next lemma is the technical core of the proof of Theorem \ref{theorem: main theorem}. This lemma is a variant of Lemma 4.4 from \cite{KrRz}, and its proof is a non-trivial elaboration on the proof of that lemma.

\begin{lemma}\label{lemma: main technical lemma}
Let $G$ be the intersection of all sets of the form $\RR K/H$, where $K$ ranges over all bounded index subgroups of $(\RR,+)$ which are type-definable over some 
sets of parameters of cardinality at most $2^{2^{|\mathcal{L}| +|A|}}$. Then $G$ is a subgroup of $(\RR/H, +)$.
\end{lemma}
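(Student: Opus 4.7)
My plan is to verify the three subgroup axioms for $G \subseteq (\RR/H, +)$: that it contains $0$, is closed under negation, and is closed under addition. Axioms (a) and (b) are immediate from the definition of $G$: for any $K$ in the defining family, $0 = 0 \cdot k \in \RR K$ for any $k \in K$, so $0 + H \in \RR K/H$; and if $\xi = rk + H \in \RR K/H$, then $-\xi = r(-k) + H \in \RR K/H$ since $-k \in K$. Intersecting over $K$ gives $0 \in G$ and $-G = G$.

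The substance lies in closure under $+$. Fix $\alpha = a + H$ and $\beta = b + H$ in $G$, and fix an arbitrary $K$ in the defining family, type-definable over some $B \subseteq \C$ with $|B| \leq \kappa := 2^{2^{|\mathcal{L}|+|A|}}$; I must show $\alpha + \beta \in \RR K / H$. My key reduction is that it suffices to find a single $r \in \RR$ together with $k_1, k_2 \in K$ such that $a - r k_1 \in H$ and $b - r k_2 \in H$. For then $k_1 + k_2 \in K$ and
\[
(a + b) - r(k_1 + k_2) = (a - r k_1) + (b - r k_2) \in H,
\]
so $\alpha + \beta = r(k_1 + k_2) + H \in rK/H \subseteq \RR K / H$.

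To produce the common $r$, my plan is to adapt the proof of \cite[Lemma 4.4]{KrRz} to the present setting. The three essential ingredients are: (i) the family of admissible $K$'s is closed under finite intersection (an intersection of sets type-definable over parameter sets of size at most $\kappa$ remains type-definable over a parameter set of size at most $\kappa$), so $K$ may be freely refined within the family; (ii) the family is closed under the natural action of $\Aut(\C/A)$; and (iii) by Remark \ref{remark: intersection of thick sets}, $H$ is the intersection of a directed family of $A$-definable additively symmetric thick supersets of $H$ in $\RR$. Applying $\alpha, \beta \in G$ to a system of successive refinements $K' \subseteq K$ in the family yields candidate witness pairs $(r', r'')$ with $a \in r' K' + H$ and $b \in r'' K' + H$; the boundedness of $[\RR : K]$ permits a pigeonhole ensuring that, after enough iteration, two witnesses agree modulo $K$, and the thickness of the $A$-definable supersets of $H$ absorbs the residual error, producing --- exactly along the template of \cite[Lemma 4.4]{KrRz} --- a single $r$ that serves both $\alpha$ and $\beta$ simultaneously.

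I expect the technical heart of the proof to lie in this pigeonhole argument. The delicate point is that the set $R_\alpha(K) := \{r \in \RR : a \in rK + H\}$ is only ind-definable, not type-definable, so saturation cannot be applied to it directly. Instead, the pigeonhole must be organized so that what varies is the refinement $K' \subseteq K$ (which lives in the type-definable world) while the ambient $\RR$ is controlled through the boundedness of $[\RR : K]$ and the thickness of the $A$-definable supersets of $H$; the ind-definability of $\RR = \langle \bar X \rangle$ itself is handled by restricting all constructions to $\bar X_m$ for $m$ sufficiently large to contain the finitely many relevant elements encountered at each stage.
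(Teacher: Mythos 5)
Your verification of the two easy axioms is fine, and your reduction is a \emph{sufficient} condition: if one $r \in \RR$ and $k_1,k_2 \in K$ satisfy $a-rk_1 \in H$ and $b-rk_2 \in H$, then indeed $a+b \in rK+H$. But this ``common $r$'' statement is not what the argument of \cite[Lemma 4.4]{KrRz} (or its elaboration in this paper) delivers, and your sketch for producing it has a genuine gap at exactly the crucial point. The pigeonhole you describe can only give two witnesses $r',r''$ lying in the same coset of $K$ (or of some bounded-index subgroup), i.e.\ $r''-r' \in K$; this does not merge them into one witness, because $r''K' = r'K' + (r''-r')K'$ and the product set $(r''-r')K'$ is completely uncontrolled: $K$ is merely a bounded-index additive subgroup of $\RR$, with no multiplicative closure, so $(r''-r')K'$ need not be contained in $K$, in $H$, nor in any of the $A$-definable thick supersets $D_i \supseteq H$. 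Thickness of the $D_i$ is a purely additive largeness property and cannot ``absorb'' a multiplicative error of this kind. So the central step --- a single $r$ serving both $a$ and $b$ --- is asserted, not proved, and nothing in the intended template actually yields it.

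What the paper does instead is structurally different. For witnesses $r,s$ (for $a$ and $b$ respectively) it considers the homomorphisms $g_r,g_s \colon \RR \to \RR/H$, $x \mapsto rx/H$, and asks whether $\ker(g_r)\cap K$ and $\ker(g_s)\cap K$ are coset-independent subgroups of $K$ (Definition \ref{def:coset_indep}, Remark \ref{remark: coset independent}). If, for every $K\leq K_{I,\omega}$ in the family, such witnesses exist, then the cosets $g_r^{-1}(a)\cap K$ and $g_s^{-1}(b)\cap K$ intersect, producing a \emph{common $k \in K$} (not a common $r$) with $rk/H=a$ and $sk/H=b$, whence $a+b=(r+s)k/H \in \RR K/H$. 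The opposite case is ruled out by contradiction via a counting invariant $n_{i,s,K}$, the maximal size of a subset of $sK$ whose pairwise differences avoid a thick $A$-definable $D_i\supseteq H$ (finite exactly because $D_i$ is $M$-thick in the relevant $\bar X_k$, Remark \ref{remark: thick}); one first fixes groups $K_{i,m}$ minimizing these invariants, and in the coset-dependent case uses the proper subgroups $L_s=(\ker(g_{r_0})\cap K)+(\ker(g_s)\cap K)\lneq K$, the bound $2^{2^{|\mathcal{L}|+|A|}}$ on the number of possible $L_s$ (this is where the cardinal in the statement is really used), and a saturation argument uniformizing over $s\in\bar X_m$ to build a further refinement on which the invariant strictly drops, contradicting minimality. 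This dichotomy-plus-rank mechanism is the heart of the proof and is absent from your proposal; ``pigeonhole on $[\RR:K]$ plus thickness absorbing the residual error'' does not substitute for it.
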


\begin{proof}
It is clear that $0/H \in G$ and $G$ is closed under additive inverses. Thus, we need to show that it is closed under $+$. So consider any $a,b \in G$, and we will show that $a+b \in G$.

The family of subgroups of $(\bar R, +)$ over which $K$ ranges in the statement of the lemma will be denoted by $\mathcal{K}$.

By Remark \ref{remark: intersection of thick sets}, $H = \bigcap_{i \in I} D_i$ for some downward directed family $\{D_i\}_{i \in I}$ of $A$-definable thick subsets of $\bar R$. We can assume that $|I| \leq |\mathcal{L}| + |A|$. Using Remark \ref{remark: thick} and the terminology introduced there, for every $i \in I$ and $m \in \omega$ we can choose a positive integer $M_{i,m}$ such that  $D_i$ is $M_{i,m}$-thick in $\bar X_m$.

For every $i \in I$, $s \in \RR$, and $K \in \mathcal{K}$, define:

$$ n_{i,s,K} := \max \{ |Y| : Y \subseteq sK \textrm{ and for every distinct } x,y \in Y \textrm{ we have } x-y \notin D_i\}.$$

Note that since $sK \subseteq \bar X_k$ for some $k \in \omega$, we have $n_{i,s,K} < M_{i,k}$. Since for any fixed $m \in \omega$ and $K \in \mathcal{K}$ there is $k$ such that $\bar X_m K \subseteq \bar X_k$, for every fixed $i \in I$ and $m \in \omega$ there exists a smallest $n_{i,m} \in \omega$ for which there exists $K_{i,m} \in \mathcal{K}$ such that

$$(\forall s \in \bar X_m)(b \in sK_{i,m}/H \Rightarrow n_{i,s,K_{i,m}} \leq n_{i,m}).$$
(In particular, if there is no $s \in \bar X_m$ for which $b \in sK_{i,m}/H$, then $n_{i,m}=0$.)
Put 

$$K_{I,\omega}:= \bigcap_{i \in I} \bigcap_{m \in \omega} K_{i,m}.$$

Since $|I| \leq |\mathcal{L}| + |A|$, we get $K_{I,\omega} \in \mathcal{K}$. By the above choices, we also have

$$(*) \;\;\;\;\;\; (\forall i \in I)(\forall m \in \omega)(\forall s \in \bar X_m) (b \in sK_{I, \omega}/H \Rightarrow n_{i,s,K_{I, \omega}} \leq n_{i,m}).$$

For $r \in \RR$ let $g_r \colon \RR \to \RR/H$ be given by $g_r(x) := rx/H$. It is a group homomorphism. Note that $[\RR: \ker(g_r)] \leq |\RR/H| \leq 2^{|\mathcal{L}| + |A|}$.

		\paragraph{\textbf{Case 1.}} For every $K \in \mathcal{K}$ with $K \leq K_{I,\omega}$, there are $r,s \in \RR$ with $a \in rK/H$ and $b \in sK/H$ such that $\ker(g_r) \cap K$ and $\ker(g_s) \cap K$ are coset-independent subgroups of $K$.

Then, since $g_r^{-1}(a) \cap K$ and $g_s^{-1}(b) \cap K$ are cosets of  $\ker(g_r) \cap K$ and $\ker(g_s) \cap K$, respectively, they have a non-empty intersection, i.e. there is $k \in K$ with $rk/H=a$ and $sk/H=b$.
		Hence, $a+b = (r+s)k/H \in \RR K/H$. Since this holds for every $K \in \mathcal{K}$ with $K \leq K_{I,\omega}$ (so also for every  $K \in \mathcal{K}$), we conclude that $a+b \in G$.

		\paragraph{\textbf{Case 2.}} There exists $K \in \mathcal{K}$ with $K \leq K_{I,\omega}$ such that for all $r,s \in R$ with $a \in rK/H$ and $b \in sK/H$, $\ker(g_r) \cap K$ and $\ker(g_s) \cap K$ are coset-dependent subgroups of $K$.

By the definition of $G$, pick $r_0 \in \bar R$ with $a \in r_0K/H$. By Remark \ref{remark: coset independent}, for any $s \in \RR$ with $b\in sK/H$ (by the definition of $G$, at least one such $s$ exists),
		
$$(**)\;\;\;\;\;\; \ker(g_{r_0}) \cap K \leq (\ker(g_{r_0}) \cap K) + (\ker(g_s) \cap K) \lneq K.$$

Put $L_s:= (\ker(g_{r_0}) \cap K) + (\ker(g_s) \cap K)$. Since $[K : \ker(g_{r_0}) \cap K] \leq |\RR/H| \leq 2^{|\mathcal{L}|+|A|}$, there are at most $2^{2^{|\mathcal{L}|+|A|}}$ possibilities for $L_s$ when $s$ varies as above. Let $K_{I,\omega}'$ be the intersection of all these $L_s$'s. Since each $L_s$ is type-definable over the parameters over which $K$ is defined together with 
$r_0,s$, we see that $L_s \in \mathcal{K}$. Hence, $K_{I,\omega}' \in \mathcal{K}$.

Since there are at most $2^{2^{|\mathcal{L}|+|A|}}$ possibilities for $L_s$, by (**), there exists a set $E \subseteq K$ with $|E| \leq 2^{2^{|\mathcal{L}|+|A|}}$ such that 

$$(***)\;\;\;\;\;\; (\forall s \in \RR)(b \in sK/H \Rightarrow (\exists k \in E)(k \in K \setminus L_s)).$$
Note that the condition $k \in K \setminus L_s$ implies that $sk \notin sL_s +H$.

\begin{clm*}
For every $m \in \omega$ there exists $i_m \in I$ such that for every $s \in \bar X_m$ with $b \in sK/H$ there is $k \in E$ such that $sk \notin sL_s +D_{i_m}$.
\end{clm*}

\begin{clmproof}
Suppose this fails, which is witnessed by some $m \in \omega$. Note that the sets $L_s$ are type-definable uniformly in $s$, that is there is a type $\pi(x,y)$ (with some fixed parameters) such that $L_s = \pi(\C, s)$ for every $s$ in question. Thus, since $E$ is small and $\{D_i\}_{i \in I}$ is downward directed, by compactness (or rather saturation of $\C$), there exists $s \in \bar X_m$ with $b \in sK/H$ such that $(\forall  k \in E)(sk \in sL_s +H)$, a contradiction with $(***)$.
\end{clmproof}



Now, by the definition of $G$, we can find $s_0 \in \RR$ for which $b \in s_0K_{I,\omega}'/H$. Choose $m<\omega$ such that $s_0 \in \bar X_m$. By the claim, we get 

$$(\forall s \in \bar X_m) ( b \in sK/H \Rightarrow n_{i_m,s, L_s} < n_{i_m, s, K}).$$
Since $K_{I,\omega}' \leq L_s \leq K \leq K_{I,\omega}$, we conclude that

$$(\forall s \in \bar X_m) (b \in sK_{I,\omega}'/H \Rightarrow n_{i_m,s, K_{I,\omega}'} < n_{i_m, s, K_{I,\omega}}).$$
By $(*)$, this implies that

$$(\forall s \in \bar X_m) (b \in sK_{I,\omega}'/H \Rightarrow n_{i_m,s, K_{I,\omega}'} < n_{i_m, m}),$$
which contradicts the minimality of $n_{i_m,m}$ (as there is at least one $s \in \bar X_m$ with $b \in sK_{I,\omega}'/H$, namely $s_0$).
\end{proof}

In order to prove Theorem \ref{theorem: main theorem}, we need one more non-trivial ingredient stated below. For a proof in the context of definable groups see the proof of Fact 2.2 of \cite{KrRz}. It works the same for definable approximate subgroups, as the facts on which it relies (i.e. \cite[Theorem 12]{MaWa} and \cite[Theorem 5.2]{Mas}) are stated for definable approximate subgroups. Also, instead of Lemmas 2.2(2) and 3.3 of \cite{Gis1}, 
one should use their versions for approximate subgroups  stated in Propositions 4.3 and 4.5 of \cite{HKP}.

\begin{fact}\label{fact: 00=000 for def. amen.}
If $Z$ is a definably amenable 0-definable approximate subgroup, then $\langle \bar Z \rangle^{00}_A = \langle \bar Z \rangle^{000}_A$. In particular, $(\RR,+)^{00}_A = (\RR,+)^{000}_A$.
\end{fact}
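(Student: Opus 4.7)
The plan is to follow the proof of Fact 2.2 of \cite{KrRz}, adapting it to the approximate-subgroup setting by replacing the role of \cite[Lemmas 2.2(2) and 3.3]{Gis1} with their counterparts for approximate subgroups \cite[Propositions 4.3 and 4.5]{HKP}, while retaining \cite[Theorem 12]{MaWa} and \cite[Theorem 5.2]{Mas} as the amenability inputs. Since any $A$-type-definable subgroup of bounded index is automatically $A$-invariant, the inclusion $\langle \bar Z \rangle^{000}_A \subseteq \langle \bar Z \rangle^{00}_A$ is immediate once the latter is known to exist; and the existence of $\langle \bar Z \rangle^{00}_A$ follows from the definable amenability of $Z$ via \cite[Theorem 12]{MaWa} combined with \cite[Theorem 5.2]{Mas} for parameter control, exactly as in the argument used for Fact \ref{fact: main consequence of MaWa and Mas}. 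In particular, one obtains an $A$-type-definable bounded-index subgroup of $\langle \bar Z \rangle$ contained in $\bar Z^8$, which by minimality is $\langle \bar Z \rangle^{00}_A$.

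The core is the reverse inclusion $\langle \bar Z \rangle^{00}_A \subseteq \langle \bar Z \rangle^{000}_A$. To prove it, I would take an arbitrary $A$-invariant bounded-index subgroup $N \leq \langle \bar Z \rangle$ and show $\langle \bar Z \rangle^{00}_A \subseteq N$; specialising to $N = \langle \bar Z \rangle^{000}_A$ then gives the desired containment. By \cite[Proposition 4.5]{HKP} (the approximate-subgroup analogue of \cite[Lemma 3.3]{Gis1}), every such $N$ contains a set of the form $D D^{-1}$ for some $A$-definable, symmetric, thick subset $D$ of a fixed power $\bar Z^m$. Since $Z$ is definably amenable, applying \cite[Theorem 12]{MaWa} (together with \cite[Theorem 5.2]{Mas}) to $D$ yields an $A$-type-definable bounded-index subgroup $H \subseteq D D^{-1} \subseteq N$, and then minimality of $\langle \bar Z \rangle^{00}_A$ gives $\langle \bar Z \rangle^{00}_A \subseteq H \subseteq N$, as required. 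The main technical obstacle I anticipate is confirming that the thick-set/genericity formalism in \cite{HKP} interfaces cleanly with Massicot--Wagner in the approximate setting, but this is precisely what \cite[Propositions 4.3 and 4.5]{HKP} are set up to provide.

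For the ``in particular'' clause, I would apply the first part to $\bar X$ viewed as an approximate subgroup of the additive subgroup $G \leq (\bar R,+)$ that it generates. Since $G$ is abelian it is amenable, so $\bar X$ is (definably) amenable by \cite[Lemma 6.1]{Hru2} or \cite[Proposition 5.8]{Ma}, and the first part then yields $G^{00}_A = G^{000}_A$. Fact \ref{fact: by Lemma 5.5 of Bru} shows that $G$ has countable (in particular bounded) index in $(\bar R,+)$, so this common subgroup simultaneously plays the roles of $(\bar R,+)^{00}_A$ and $(\bar R,+)^{000}_A$, yielding the claimed equality.
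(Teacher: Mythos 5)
Your overall route is the paper's own: the paper proves this fact by referring to the proof of \cite[Fact 2.2]{KrRz}, with \cite[Theorem 12]{MaWa} and \cite[Theorem 5.2]{Mas} as the amenability inputs and with Propositions 4.3 and 4.5 of \cite{HKP} replacing Lemmas 2.2(2) and 3.3 of \cite{Gis1}. Your opening paragraph (existence of $\langle \bar Z\rangle^{00}_A$ and the inclusion $\langle \bar Z\rangle^{000}_A \subseteq \langle\bar Z\rangle^{00}_A$) and your handling of the ``in particular'' clause (the same argument as in the proof of Fact \ref{fact: main consequence of MaWa and Mas}) are fine.

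The gap is in how you fill in the core step. You claim, citing \cite[Proposition 4.5]{HKP}, that every $A$-invariant bounded-index subgroup $N$ of $\langle\bar Z\rangle$ contains $DD^{-1}$ for a \emph{single} $A$-definable thick set $D$. Neither that proposition nor \cite[Lemma 3.3]{Gis1} can assert this, because it already fails for definable groups: take a monster model $\ZZ$ of $(\Z,+)$ (with any structure, e.g.\ the full one) and $N := \bigcap_{n\geq 1} n\ZZ$, an $\emptyset$-type-definable (hence invariant) subgroup of bounded index. If $D-D\subseteq N$ for a nonempty definable $D$, then $D$ lies in a single coset $a+N$; if $a\in N$, the standard integers $0,1,\dots,M-1$ have all pairwise differences outside $N\supseteq D$, and if $a\notin N$, choose $n_0$ with $a\notin n_0\ZZ$ and use $0,n_0,\dots,(M-1)n_0$, whose pairwise differences lie in $n_0\ZZ$ and hence outside $a+N\supseteq D$. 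Since $M$ is arbitrary, saturation yields an unbounded sequence with no difference in $D$, so $D$ is not thick. Thus your one-shot application of Massicot--Wagner to a single thick set cannot land inside $N$ (there is also a power mismatch: \cite[Theorem 12]{MaWa} produces a subgroup inside $D^4$, not inside $DD^{-1}$). What the cited results actually give is weaker: $N$ contains the set of quotients of pairs beginning an unbounded $A$-indiscernible sequence, which is the intersection $\Theta_A$ of the downward-directed family of \emph{all} $A$-definable thick sets, and this set generates $\langle\bar Z\rangle^{000}_A$. Accordingly, the Massicot--Wagner input has to be used relative to the whole family: each thick $P$ is generic, so \cite[Theorem 12]{MaWa} together with the parameter control of \cite[Theorem 5.2]{Mas} yields a bounded-index type-definable subgroup $H_P\subseteq P^4$ over controlled parameters; intersecting over the boundedly many $P$'s and using directedness plus saturation, $\bigcap_P P^4=\bigl(\bigcap_P P\bigr)^4=\Theta_A^4\subseteq\langle\bar Z\rangle^{000}_A$, so one obtains an $A$-invariant, type-definable (hence $A$-type-definable), bounded-index subgroup inside $\langle\bar Z\rangle^{000}_A$, giving $\langle\bar Z\rangle^{00}_A\subseteq\langle\bar Z\rangle^{000}_A$. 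This family-wise bookkeeping, which is the substance of the proof of \cite[Fact 2.2]{KrRz} that the paper points to, is exactly what your sketch skips.
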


Now, we are ready to prove Theorem \ref{theorem: main theorem}.

\begin{proof}[Proof of Theorem \ref{theorem: main theorem}]
Let $\mathcal{K}$ be the family of all bounded index subgroups of $(\RR,+)$ which are type-definable over some sets of parameters of cardinality at most $2^{2^{|\mathcal{L}| +|A|}}$. Let $G := \bigcap_{K \in \mathcal{K}} \RR K/H$, as in Lemma \ref{lemma: main technical lemma}. By Lemma \ref{lemma: main technical lemma}, $G$ is a subgroup of $\RR/H$ which is clearly $A$-invariant. 

Since $|\RR/H| \leq 2^{|\mathcal{L}| +|A|}$, $G$ is an intersection of at most $2^{2^{|\mathcal{L}| +|A|}}$ sets $\RR K/H$, i.e. $G=\bigcap_{K \in \mathcal{K}_0} \RR K/H$ for some $\mathcal{K}_0 \subseteq \mathcal{K}$ of cardinality bounded by  $2^{2^{|\mathcal{L}| +|A|}}$. Let $K_0: = H \cap  \bigcap \mathcal{K}_0$. Then $K_0 \in \mathcal{K}$ and $G=\RR K_0/H$. 

Let $H_0:=\bigcap_{r \in \RR} g_r^{-1}[G]$, where $g_r \colon H \to \RR/H$ is given by $g_r(x) := rx/H$. Since $G$ is a subgroup of $\RR/H$ and $G$ and $H$ are both  $A$-invariant, we get that $H_0$ is an  $A$-invariant subgroup of $H$. It is clear that $K_0 \leq H_0$, so $H_0$ is of  bounded index in $(\RR,+)$. Therefore, $(\RR,+)^{000}_A \leq H_0 \leq H = (\RR,+)^{00}_A$. Since by Fact \ref{fact: 00=000 for def. amen.} $(\RR,+)^{00}_A =(\RR,+)^{000}_A$, we conclude that $H_0=H$. 

On the other hand, $\RR H_0/H =G$ (($\subseteq$) follows by the definition of $H_0$, while ($\supseteq$) follows from the fact that $K_0 \leq H_0$ and $\RR K_0/H =G$).

Putting the last two paragraphs together, we get $G=\RR H/H$. Hence, $\pi^{-1}[G] = H + \RR H$ is an $A$-invariant, bounded index subgroup of $\RR$, where $\pi \colon \RR \to \RR/H$ is the quotient map. It follows that $H + \RR H$ is closed under left multiplication by the elements of $\RR$, and so it is a left ideal. Therefore, by Proposition \ref{proposition: side is irrelevant} and Fact \ref{fact: 00=000 for def. amen.}, we get 
$$\RR^{000}_A \subseteq H + \RR H = (\RR,+)^{000}_A + \RR (\RR,+)^{000}_A \subseteq \RR^{000}_A + \RR \RR^{000}_A = \RR^{000}_A,$$
and hence  $H + \RR H = \RR^{000}_A$ as required.

For the ``moreover'' part, it is enough to prove that

$$(\RR,+)^{00}_A + \RR(\RR,+)^{00}_A = (\RR,+)^{00}_A +\bar X (\RR,+)^{00}_A.$$

Indeed, since the right hand side is $A$-type-definable and the left hand side equals  $\RR^{000}_A$ by the first part of the theorem, we conclude that 
both sides are equal to $\RR^{00}_A$, which will complete the proof.

The inclusion ($\supseteq$) is obvious. So we prove ($\subseteq$). By Fact \ref{fact: by Lemma 5.5 of Bru}, we can choose a countable subset $Y$ of $R$ so that $Y +\bar X =\RR$. It is enough to show that

$$(\forall y \in Y)( y(\RR,+)^{00}_A \subseteq (\RR,+)^{00}_A).$$

So pick $y \in Y$. Let $l_y \colon  \RR \to \RR$ be given by $l_y(t) := yt$. This is a $\{y\}$-invariant group homomorphism. Choose $m \in \omega$ so that $(\RR,+)^{00}_A \subseteq \bar X_m$. Since $l_y|_{\bar X_m}$ is $\{y\}$-definable, and, by the assumption that $R \subseteq \dcl(A)$ we have $y \in \dcl(A)$, we get $l_y[(\RR,+)^{00}_A] = (l_y[\RR],+)^{00}_A$. 
On the other hand, $(l_y[\RR],+)^{00}_A \leq  (\RR,+)^{00}_A$, for if not, then $(l_y[\RR],+)^{00}_A \cap (\RR,+)^{00}_A$ would be a proper $A$-type-definable subgroup of $(l_y[\RR],+)^{00}_A$ of bounded index.
Therefore, $y(\RR,+)^{00}_A =l_y[(\RR,+)^{00}_A]= (l_y[\RR],+)^{00}_A \subseteq  (\RR,+)^{00}_A$.
\end{proof}


The next corollary answers positively Question 1.3 of \cite{KrRz}.

\begin{corollary}
If $\bar R$ is definable, then $(\bar R,+)^{00}_A + \bar R  \cdot (\bar R,+)^{00}_A=\RR^{000}_A = \RR^{00}_A$ for an arbitrary small $A \subseteq \C$.
\end{corollary}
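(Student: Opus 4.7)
The plan is to invoke Theorem \ref{theorem: main theorem} to obtain the equality $(\bar R,+)^{00}_A + \bar R \cdot (\bar R,+)^{00}_A = \RR^{000}_A$ directly; this holds for any small $A \subseteq \C$, with no definability hypothesis on $\bar R$. It then remains to prove $\RR^{000}_A = \RR^{00}_A$, using the additional assumption that $\bar R$ is definable. Since $\bar R = \bigcup_n \bar X_n$ is definable in $\C$, saturation forces $\bar R = \bar X_N$ for some $N$, so $\bar R$ is 0-definable; hence $\RR^{00}_A$ exists in the classical sense of Fact \ref{fact: ideal component}, and the inclusion $\RR^{000}_A \subseteq \RR^{00}_A$ is automatic because any $A$-type-definable ideal is $A$-invariant.

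The reverse inclusion $\RR^{00}_A \subseteq \RR^{000}_A$ will follow once we show that $H + \bar R \cdot H$ is $A$-type-definable, where $H := (\bar R,+)^{00}_A$. I would write $H = \bigcap_{i \in I} D_i$ with $\{D_i\}_{i \in I}$ a downward directed family of $A$-definable, additively symmetric subsets of $\bar X_m$ for some fixed $m$. Since $\bar R = \bar X_N$ is 0-definable, each product set $\bar R \cdot D_i$ is $A$-definable as the projection of an $A$-definable relation. The key claim is that
\[ \bar R \cdot H = \bigcap_{i \in I} \bar R \cdot D_i, \qquad H + \bar R \cdot H = \bigcap_{i,j \in I}(D_i + \bar R \cdot D_j). \]
The nontrivial $\supseteq$ inclusions are proved by a saturation argument: by Fact \ref{fact: by Lemma 5.5 of Bru} any element $z$ of the right-hand side lies in a fixed $\bar X_K$, and the partial type asserting $z = y + r \cdot d$ together with $y \in D_i$, $r \in \bar R$, $d \in D_j$ for all $i,j \in I$ is finitely consistent thanks to downward directedness of $\{D_i\}$, and is therefore realized by saturation of $\C$.

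This shows that $\RR^{000}_A = H + \bar R \cdot H$ is an $A$-type-definable two-sided ideal of $\bar R$ of bounded index, so $\RR^{00}_A \subseteq \RR^{000}_A$, and the corollary is done. The only genuine obstacle is the passage from $A$-invariance of $H + \bar R \cdot H$ to its $A$-type-definability: images of type-definable sets under definable maps are not in general type-definable, but here the argument goes through precisely because the definability of $\bar R$ confines everything to a single definable ambient set $\bar X_K$, so that the required compactness argument can be carried out inside a definable piece of $\C$.
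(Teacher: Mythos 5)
Your proof is correct and follows essentially the same route as the paper, whose entire argument is that once $\bar R$ is definable the set $(\bar R,+)^{00}_A + \bar R \cdot (\bar R,+)^{00}_A$ is $A$-type-definable and therefore, being equal to $\RR^{000}_A$ by Theorem \ref{theorem: main theorem}, is the smallest $A$-type-definable bounded-index ideal; your saturation verification via $\bigcap_{i,j}\bigl(D_i+\bar R\cdot D_j\bigr)$ simply spells out that type-definability claim. The only inaccurate point is the closing aside: in a monster model, images of type-definable sets under definable maps \emph{are} type-definable (by exactly the compactness argument you give), and what definability of $\bar R$ really buys is that $\bar R$ is a single definable set $\bar X_N$ rather than the ind-definable union $\bigcup_m \bar X_m$, in which case $\bar R\cdot (\bar R,+)^{00}_A$ would only be a countable union of type-definable sets --- this is why the general case of Theorem \ref{theorem: main theorem} needs $R\subseteq\dcl(A)$ and the separate argument replacing $\bar R\cdot(\RR,+)^{00}_A$ by $\bar X\cdot(\RR,+)^{00}_A$.
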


\begin{proof}
It follows from Theorem  \ref{theorem: main theorem}, because  $(\bar R,+)^{00}_A + \bar R  \cdot (\bar R,+)^{00}_A$ is $A$-type-definable.
\end{proof}

The next corollary yields the existence and a description of the universal definable locally compact model for an arbitrary definable approximate subring.

\begin{corollary}\label{corollary: locally compact model exists}
Let $X$ be a 0-definable (in $M$) approximate subring, $R:=\langle X \rangle$, and $\bar R = \langle \bar X \rangle$. Then $X$ has a definable locally compact model. More precisely, the quotient map  $h \colon R \to \RR/\RR^{00}_M$ is the universal definable locally compact model of $X$, and $U:= \{ a/\RR^{00}_M : a +\RR^{00}_M \subseteq 4\bar X + \bar X \cdot 4\bar X\}$ is an open neighborhood of $0/\RR^{00}_M$ such that $h^{-1}[U] \subseteq 4X + X\cdot 4X$.
\end{corollary}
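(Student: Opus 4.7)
The plan is to assemble this corollary directly from Theorem \ref{theorem: main theorem}, Fact \ref{fact: main consequence of MaWa and Mas}, and Proposition \ref{proposition: locally compact model}. First I would apply Theorem \ref{theorem: main theorem} with $A := M$: since $R \subseteq M \subseteq \dcl(M)$, the ``moreover'' clause yields that $\RR^{00}_M$ exists and satisfies $\RR^{00}_M = (\RR,+)^{00}_M + \bar X \cdot (\RR,+)^{00}_M$. Combining this with Fact \ref{fact: main consequence of MaWa and Mas} applied with $A := M$, which gives $(\RR,+)^{00}_M \subseteq 4\bar X$, I get the key containment
\[
\RR^{00}_M \;\subseteq\; 4\bar X + \bar X \cdot 4\bar X.
\]

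Given existence of $\RR^{00}_M$, Proposition \ref{proposition: locally compact model} (with $A := M$) immediately delivers both the existence of a definable locally compact model and the universality of the quotient map $h \colon R \to \RR/\RR^{00}_M$, with no further work required.

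It remains to check the two properties of $U := \{a/\RR^{00}_M : a + \RR^{00}_M \subseteq 4\bar X + \bar X \cdot 4\bar X\}$. The membership $0/\RR^{00}_M \in U$ is exactly the display above. For openness in the logic topology I would run a standard saturation argument: writing $\RR^{00}_M = \bigcap_i \phi_i(\C)$ as a downward directed intersection of $M$-definable sets, one has by saturation that $a + \RR^{00}_M \subseteq 4\bar X + \bar X \cdot 4\bar X$ iff $a + \phi_i(\C) \subseteq 4\bar X + \bar X \cdot 4\bar X$ for some $i$; hence $\pi^{-1}[U]$ is a union of $M$-definable sets, its complement in $\RR$ is $M$-type-definable, and so is the intersection of that complement with each $\bar X_m$, which is precisely the closedness condition for $U^c$. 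For the inclusion $h^{-1}[U] \subseteq 4X + X \cdot 4X$: if $a \in h^{-1}[U]$, then taking $0 \in \RR^{00}_M$ gives $a = a + 0 \in 4\bar X + \bar X \cdot 4\bar X$, and since $a \in R \subseteq M$, $M \prec \C$, and the formula defining $4X + X \cdot 4X$ is $0$-definable, elementarity yields $a \in 4X + X \cdot 4X$.

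No step constitutes a serious obstacle, as all the substantive content sits in Theorem \ref{theorem: main theorem}; the only mildly technical point is the saturation argument for openness of $U$.
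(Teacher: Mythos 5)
Your proposal is correct and follows essentially the same route as the paper: the ``moreover'' part of Theorem \ref{theorem: main theorem} (with $A=M$, using $R\subseteq\dcl(M)$) plus Proposition \ref{proposition: locally compact model} give existence and universality of $h$, and Fact \ref{fact: main consequence of MaWa and Mas} gives $\RR^{00}_M=(\RR,+)^{00}_M+\bar X(\RR,+)^{00}_M\subseteq 4\bar X+\bar X\cdot 4\bar X$, hence $0/\RR^{00}_M\in U$. The only difference is that you spell out the saturation argument for openness of $U$ and the elementarity step for $h^{-1}[U]\subseteq 4X+X\cdot 4X$, which the paper dismisses as easy/obvious; both of your elaborations are fine (the relevant point being that $\pi^{-1}[U^c]\cap\bar X_m$ is type-definable for each $m$, as you note).
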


\begin{proof}
By the ``moreover'' part of Theorem  \ref{theorem: main theorem} and Proposition \ref{proposition: locally compact model}, we get that $\RR^{00}_M$ exists and the quotient map  $h \colon R \to \RR/\RR^{00}_M$ is the universal definable locally compact model of $X$. 

By Fact \ref{fact: main consequence of MaWa and Mas}, we know that $(\RR,+)^{00}_M \subseteq 4\bar X$. So, by the ``moreover'' part of Theorem  \ref{theorem: main theorem}, we get $\RR^{00}_M =  (\RR,+)^{00}_M +\bar X (\RR,+)^{00}_M \subseteq 4\bar X + \bar X \cdot 4\bar X$. Thus, $0/\RR^{00}_M \in U$. The fact that $U$ is open follows easily from the definition of the logic topology on $\RR/\RR^{00}_M$. The fact that $h^{-1}[U] \subseteq 4X + X\cdot 4X$ is obvious by the definition of $U$.
\end{proof}

\section{Applications}\label{section: application}

In this section, we give an application of the existence of definable locally compact models to a classification up to additive commensurability of all approximate subrings of rings of positive characteristic as well as all finite approximate subrings of rings without zero divisors.

Let again $X$ be a 0-definable (in $M$) approximate subring, $R:=\langle X \rangle$, $\bar R = \langle \bar X \rangle$. By Corollary \ref{corollary: locally compact model exists}, we can choose  $f \colon R \to S$ a definable locally compact model of $X$ with dense image; let $\bar f \colon \bar R \to S$ be the unique extension of $f$ as in Lemma \ref{lemma: extension of definable map}(1).  (For example, one can take $S:=\RR/\RR^{00}_M$ and $\bar f$ the quotient map $\bar R \to  \RR/\RR^{00}_M$.)  We leave as an exercise to show that $\bar f$ is onto.

Although only the implication $(\leftarrow)$ in the next lemma will be used in the proof of Theorem \ref{theorem: application of the main theorem}, for completeness we state the lemma in the form of an equivalence.

\begin{lemma}\label{lemma: criterion for definable subring}
$X$ is additively commensurable with a definable subring of $R$ if and only if $S$ has a compact open subring. More precisely, if $U$ is a compact open subring of $S$, then $f^{-1}[U]$ is a definable subring of $R$ additively commensurable with $X$.
\end{lemma}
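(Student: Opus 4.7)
The plan is to treat the two implications separately. The backward direction will follow directly from the definability of $f$ combined with Fact~\ref{fact: preimage of open is generic}(2), while the forward direction will reduce, via a Baire category argument, to showing that the closure of $f[T]$ is open in $S$.

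For the backward direction, given a compact open subring $U \subseteq S$, I would set $T := f^{-1}[U]$. Then $T$ is a subring of $R$, as the preimage of a subring under the ring homomorphism $f$. Also, $T$ is additively commensurable with $X$ by Fact~\ref{fact: preimage of open is generic}(2), since $U$ is in particular a relatively compact neighborhood of $0$ in $S$. Finally, $T$ is definable: applying the definability of $f$ to the pair consisting of the compact set $U$ contained in the open set $U$ yields a definable $Y \subseteq R$ sandwiched as $f^{-1}[U] \subseteq Y \subseteq f^{-1}[U]$, forcing $Y = T$.

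For the forward direction, assume $T \subseteq R$ is a definable subring with $X \subseteq E + T$ and $T \subseteq F + X$ for some finite $E, F \subseteq R$. I would show that $V := \overline{f[T]}$ is a compact open subring of $S$. Being the closure of the subring $f[T]$ in a topological ring, $V$ is a subring; compactness follows from $f[T] \subseteq f[F] + f[X]$ together with relative compactness of $f[X]$. The hard part will be openness of $V$. The plan there is: pick any relatively compact open neighborhood $W$ of $0$ in $S$; by Fact~\ref{fact: preimage of open is generic}(2), $f^{-1}[W]$ is additively commensurable with $X$, hence with $T$, so $f^{-1}[W] \subseteq E' + T$ for some finite $E' \subseteq R$. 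Applying $f$ gives $W \cap f[R] \subseteq f[E'] + V$, and since $f[R]$ is dense in $S$ while $f[E'] + V$ is closed (a finite union of additive translates of the closed set $V$), I would conclude $W \subseteq f[E'] + V$. Thus $W$ is covered by finitely many closed sets; since $S$ is locally compact Hausdorff and $W$ is open, $W$ is a Baire space, so at least one translate $e + V$ must have nonempty interior in $S$, whence $V$ itself has nonempty interior. Being an additive subgroup of $S$ containing an open set, $V$ must then be open, completing the argument. The main technical obstacle is precisely this openness step: additive commensurability only compares $T$ to $X$ via finitely many translates, and Baire category is the bridge that converts this combinatorial statement into topological openness of $\overline{f[T]}$.
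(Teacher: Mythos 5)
Your backward direction is exactly the paper's: definability of $f$ applied to the compact--open pair $(U,U)$ gives the definable set $Y=f^{-1}[U]$, and additive commensurability comes from Fact~\ref{fact: preimage of open is generic}(2). Your forward direction, however, is a genuinely different and more elementary route. The paper passes to the monster model: from commensurability it gets that the definable subring $\bar P$ has bounded index, hence $\RR^{00}_M\subseteq \bar P$ by Proposition~\ref{proposition: side is irrelevant}, so $\bar h[\bar P]$ is a clopen compact subring of $\RR/\RR^{00}_M$, and then transfers to a general $S$ through the factorization $\bar f=g\circ \bar h$ and the open mapping theorem for the $\sigma$-compact group $\RR/\RR^{00}_M$. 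You instead work entirely inside $S$: $V:=\cl(f[T])$ is a subring (continuity of the operations), compact (since $T\subseteq F+X$ and $f[X]$ is relatively compact), and open by covering a relatively compact neighborhood $W$ of $0$ with finitely many translates $f[E']+V$ (here you correctly use that $f$ has dense image, which is part of the setup before the lemma, and that $f[E']+V$ is closed) and then invoking Baire category in the locally compact open set $W$ plus the fact that a subgroup with nonempty interior is open. This is correct; just note two small points you should make explicit: the translating sets $E,F,E'$ may indeed be taken inside $R=\langle X\rangle$ (any translate meeting the relevant subset of $R$ has its translator in $R$), which is needed to apply $f$ to them, and Fact~\ref{fact: preimage of open is generic}(2) is applied to the relatively compact neighborhood $W$. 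Compared with the paper, your argument avoids the $00$-component machinery and the open mapping theorem and does not need the universal model at all, at the (mild) price of using density of $f[R]$ and Baire category; the paper's route yields the extra structural information that the clopen subring in $\RR/\RR^{00}_M$ is precisely the image of $\bar P$, which fits its surrounding development.
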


\begin{proof}
$(\rightarrow)$ Suppose $X$ is additively commensurable with a definable subring $P$. Then the same is true about $\bar X$ and $\bar P$. Hence, $\bar P$ is of bounded (even countable) index in $\bar R$, and so, by virtue of Proposition \ref{proposition: side is irrelevant},  $\RR^{00}_M \subseteq \bar P$. Therefore, $\bar h^{-1}[\bar h[\bar P]] = \bar P$, and since $\bar P$ is definable, we get that $\bar h[P]$ is a clopen compact subring of  $\RR/\RR^{00}_M$ (where $\bar h \colon \RR \to \RR/\RR^{00}_M$ is the quotient map), which completes the proof when $S= \RR/\RR^{00}_M$. In the general case, by the last paragraph of the proof of Proposition \ref{proposition: locally compact model}, $\bar f$ factors through $\bar h$ via a continuous epimorphism $g \colon \RR/\RR^{00}_M \to S$, i.e. $\bar f = g \circ \bar h$, 
and the conclusion follows from the open mapping theorem applied to the $\sigma$-compact topological group $\RR/\RR^{00}_M$ and the continuous epimorphism $g$ with the image being the locally compact group $(S,+)$. 


$(\leftarrow)$ Let $U \subseteq  S$ be a compact open subring. 
Since $f$ is definable and $U$ is compact and open, we get that $f^{-1}[U]$ is a definable subring of $R$. On the other hand, by Fact \ref{fact: preimage of open is generic}(2), it is additively commensurable with $X$.
\end{proof}

\begin{theorem}\label{theorem: application of the main theorem}
If $R$ is of positive characteristic, then $X$ is additively commensurable with a definable subring of $R$ contained in $4X +X \cdot 4X$.
\end{theorem}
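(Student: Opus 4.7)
The plan is to reduce Theorem \ref{theorem: application of the main theorem} to a purely topological statement about the locally compact ring $S := \bar R/\bar R^{00}_M$. By Corollary \ref{corollary: locally compact model exists}, the quotient map $h \colon R \to S$ is a definable locally compact model of $X$, and the open neighborhood $U := \{a/\bar R^{00}_M : a + \bar R^{00}_M \subseteq 4\bar X + \bar X \cdot 4\bar X\}$ of $0$ in $S$ satisfies $h^{-1}[U] \subseteq 4X + X\cdot 4X$. By Lemma \ref{lemma: criterion for definable subring}, it then suffices to exhibit a compact open \emph{subring} $V$ of $S$ with $V \subseteq U$, for then $h^{-1}[V]$ will be a definable subring of $R$ additively commensurable with $X$ and automatically contained in $4X + X \cdot 4X$.

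Since $R$ has positive characteristic $n$, every element of $S$ is annihilated by $n$, so $(S,+)$ is a locally compact abelian group of bounded exponent $n$. Its identity component is a connected locally compact abelian group, hence divisible (e.g.\ by the structure theorem as $\mathbb{R}^a \times K$ with $K$ compact connected, or equivalently because the Pontryagin dual of a connected group is torsion-free); a divisible group of bounded exponent must be trivial. Thus $(S,+)$ is totally disconnected, and van Dantzig's theorem supplies a base of neighborhoods of $0$ consisting of compact open additive subgroups. I fix such a subgroup $K$ with $K \subseteq U$.

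To upgrade $K$ from an additive subgroup to a subring, I set
\[
V := \{x \in K : xK \subseteq K \text{ and } Kx \subseteq K\}.
\]
The ring axioms for $V$ follow routinely from $K$ being an additive subgroup: closure under $+$ and additive inverses is immediate, while for $x,y \in V$ one has $xy \in xK \subseteq K$ and $(xy)K = x(yK) \subseteq xK \subseteq K$, symmetrically $K(xy) \subseteq K$, so $xy \in V$. Compactness holds since $V$ is closed in $K$, being an intersection of preimages of the closed set $K$ under the continuous maps $x \mapsto xu$ and $x \mapsto ux$ for $u \in K$.

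The main obstacle, and the step where the topology of $S$ (rather than just its algebra) is essential, is openness of $V$. Here I would apply the tube lemma to the continuous multiplication map $S \times S \to S$: for $x_0 \in V$, the set $\{x_0\} \times K$ lies in the open preimage of $K$, and compactness of $K$ yields an open $W_1 \ni x_0$ in $S$ with $W_1 \cdot K \subseteq K$, and symmetrically an open $W_2 \ni x_0$ with $K \cdot W_2 \subseteq K$. Then $W_1 \cap W_2 \cap K$ is an open neighborhood of $x_0$ contained in $V$. Thus $V$ is a compact open subring of $S$ with $V \subseteq K \subseteq U$, and Lemma \ref{lemma: criterion for definable subring} concludes the proof.
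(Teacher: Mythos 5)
Your argument is correct and follows the same overall strategy as the paper: reduce, via Corollary \ref{corollary: locally compact model exists} and Lemma \ref{lemma: criterion for definable subring}, to producing a compact open subring of $S=\RR/\RR^{00}_M$ inside the neighborhood $U$, and then pull it back by $h$. The only divergence is in how that compact open subring is obtained. The paper cites \cite[Theorem 3.5]{Arm} (a torsion locally compact abelian group has a basis of compact open subgroups) and then, given a compact open subgroup $V$ of $(S,+)$, uses continuity of multiplication to find an open $U'\ni 0$ inside $V$ with $U'V\subseteq V$, so that the generated subring $\langle U'\rangle$ is open, contained in $V$, hence clopen and compact. You instead re-derive the existence of compact open subgroups from bounded exponent (the connected component is divisible and of bounded exponent, hence trivial, then van Dantzig) --- which is exactly what positive characteristic supplies, so the appeal to Armacost is avoided --- and you replace the ``generated subring'' step by the two-sided stabilizer $V=\{x\in K: xK\subseteq K \text{ and } Kx\subseteq K\}$, whose openness comes from the tube lemma and whose compactness and ring axioms you verify directly. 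Both routes are elementary and correct; yours is more self-contained, while the paper's is shorter given the cited fact and sidesteps the openness-of-the-stabilizer argument. One small step you assert without justification --- that every element of $S$ is annihilated by the characteristic $n$ --- is true and easy: either use density of $h[R]$ in $S$ together with continuity of $x\mapsto nx$, or note that $\bar X$ satisfies the definable condition $nx=0$ and additively and multiplicatively generates $\RR$, so $\RR$ itself has exponent dividing $n$.
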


\begin{proof}
Since $R$ is of positive characteristic and $f[R]$ is dense in $S$, the ring $S$ is also of positive characteristic. Thus, $(S,+)$ is a torsion, locally compact abelian group, and as such it has a basis of neighborhoods of $0$ consisting of compact open (so clopen) subgroups  (see \cite[Theorem 3.5]{Arm}). This implies that it has a basis of neighborhoods of $0$ consisting of compact open subrings (but not necessarily ideals). Indeed, for every compact open subgroup $V$ of $(S,+)$ there exists an open subset $U \ni 0$ of $V$ such that $UV \subseteq V$. Then the subring $\langle U \rangle$ generated by $U$ is open and contained in $V$; hence,  $\langle U \rangle$ is clopen and so compact.

We have shown that $S$ has a compact open subring, so the existence of a definable subring of $R$ additively commensurable with $X$ follows from Lemma \ref{lemma: criterion for definable subring}$(\leftarrow)$.

To get such a subring which is additionally contained in $4X +X \cdot 4X$, let us work with $S:=  \RR/\RR^{00}_M$. By Corollary \ref{corollary: locally compact model exists}, there is an open neighborhood of $0$ in $S$ with the preimage under the quotient map $h\colon R \to \RR/\RR^{00}_M$ contained in  $4X +X \cdot 4X$. By the first paragraph of this proof, we can find a smaller neighborhood $U$ of $0$ which is a compact open subring. By the more precise information in Lemma \ref{lemma: criterion for definable subring}, $h^{-1}[U]$ is a definable subring of $R$ additively commensurable with $X$, and it is clearly contained in $4X +X \cdot 4X$.
\end{proof}

The following application of the above theorem uses a standard ultraproduct argument.

\begin{corollary}\label{corollary: application to finite approximate subrings}
For every $K,L \in \mathbb{N}$ there exists a constant $C(K,L) \in \mathbb{N}$ such that every $K$-approximate subring $X$ of a ring of positive characteristic $\leq L$ is additively $C(K,L)$-commensurable with a subring contained in $4X +X \cdot 4X$.
\end{corollary}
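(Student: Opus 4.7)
The plan is to deduce the corollary by a standard ultraproduct compactness argument from Theorem~\ref{theorem: application of the main theorem}. Arguing by contradiction, suppose that for some fixed $K, L \in \mathbb{N}$ no such $C(K,L)$ exists. Then for each $n \in \mathbb{N}$ one may choose a ring $R_n$ of positive characteristic $\leq L$ and a $K$-approximate subring $X_n \subseteq R_n$ with the property that no subring of $R_n$ contained in $4X_n + X_n \cdot 4X_n$ is additively $n$-commensurable with $X_n$. Fix a witness $F_n \subseteq \langle X_n \rangle$ of size $K$ to the $K$-approximate condition. I would equip each $R_n$ with the full structure augmented by constants for the elements of $F_n$ and a predicate for $X_n$, and form the ultraproduct $R^{\ast} := \prod_n R_n/\mathcal{U}$ with respect to a non-principal ultrafilter $\mathcal{U}$ on $\mathbb{N}$, with corresponding ultralimits $F^{\ast}$ and $X^{\ast}$. \L{}o\'s's theorem will give that $X^{\ast}$ is a $0$-definable $K$-approximate subring of $R^{\ast}$, and expressing ``positive characteristic $\leq L$'' as the finite first-order disjunction $\bigvee_{1\leq m\leq L}\forall x\,(mx=0)$ shows that $R^{\ast}$ also has positive characteristic $\leq L$.

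Next I would apply Theorem~\ref{theorem: application of the main theorem} with $M := R^{\ast}$ to extract an $R^{\ast}$-definable subring $Y^{\ast}$ of $\langle X^{\ast}\rangle$ that is contained in $4X^{\ast} + X^{\ast}\cdot 4X^{\ast}$ and additively commensurable with $X^{\ast}$. Choose a formula $\varphi(x,\bar y)$ and parameters $\bar a^{\ast} \in R^{\ast}$ so that $\varphi(x,\bar a^{\ast})$ defines $Y^{\ast}$, together with a constant $N \in \mathbb{N}$ witnessing the additive $N$-commensurability of $X^{\ast}$ with $Y^{\ast}$. The conjunction of the assertions ``$\varphi(x,\bar a)$ defines an additively symmetric subring'', ``this subring is contained in $4X + X\cdot 4X$'', and ``it is additively $N$-commensurable with $X$'' is a first-order condition on the parameters $(X, F, \bar a)$ that holds in $R^{\ast}$; by \L{}o\'s's theorem it will then descend to $R_n$ with a representative $\bar a_n$ of $\bar a^{\ast}$ for $\mathcal{U}$-almost every $n$. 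Taking any such $n$ with $n > N$ contradicts the construction of $X_n$, and we conclude that one may take $C(K,L) = N$ for some $N \in \mathbb{N}$.

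The only point that genuinely needs verification is that all relevant notions---being a $K$-approximate subring with witness $F$, having positive characteristic $\leq L$, being an additively symmetric subring contained in $4X + X\cdot 4X$, and additive $N$-commensurability between two sets---are all expressible by first-order sentences whose syntactic complexity is bounded purely in terms of the constants $K, L, N$. Each such property is essentially an existential statement over a bounded number of witnesses, so this is routine and no step presents a serious obstacle. The whole weight of the argument sits in Theorem~\ref{theorem: application of the main theorem}; the corollary is the standard quantitative shadow of a qualitative theorem obtained by transfer along a non-principal ultraproduct.
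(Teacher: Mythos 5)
Your proposal is correct and takes essentially the same route as the paper: argue by contradiction, form an ultraproduct of putative counterexamples in the ring language expanded by a predicate for $X_n$, apply Theorem \ref{theorem: application of the main theorem} to the resulting definable $K$-approximate subring of the positive-characteristic ultraproduct, and transfer the definable subring with its commensurability constant back to almost all factors by \L{}o\'s to contradict the choice of $X_n$ for large $n$. The only (cosmetic) differences are your extra decorations of the factors --- the constants for $F_n$ are harmless but unnecessary, and ``the full structure'' is not literally available as a single common language for the ultraproduct and is not needed --- while the paper simply works with the ring language plus one predicate and observes that the ultraproduct has positive characteristic $\leq L!$.
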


\begin{proof}
Suppose for a contradiction that for every $n \in \omega$ there is a $K$-approximate subring $X_n$ of a ring of positive characteristic $\leq L$ which is not additively $n$-commensurable with a subring contained in $4X_n +X_n \cdot 4X_n$. Let $R_n:= \langle X_n \rangle$, all considered in the language of rings expanded by an additional predicate symbol $P$ interpreted in $R_n$ as $P(R_n):=X_n$ . Take a non-principal ultrafilter $\mathcal{U}$ on $\omega$, and let $M := \prod R_n/\mathcal{U}$. Then $M$ is a ring of positive characteristic $\leq L!$. Let $X:= P(M)=\prod X_n/\mathcal{U}$. It is a definable $K$-approximate subring of $M$. By Theorem \ref{theorem: application of the main theorem}, there exists a definable subring $F$ of $M$ which is additively $m$-commensurable with $X$ for some $m \in \omega$ and contained in $4X + X \cdot 4X$. Then $F=\varphi(M,\bar a)$ for some formula $\varphi(x,\bar y)$ over $\emptyset$ and tuple $\bar a$ of parameters from $M$; so $\bar a = ((a_{1n})_{n<\omega}/\mathcal{U},\dots, (a_{kn})_{n<\omega}/\mathcal{U})$. We conclude that for all $n$ from some set $U \in \mathcal{U}$ we have that $\varphi(R_n, (a_{1n},\dots,a_{kn}))$ is a subring of $R_n$ which is additively $m$-commensurable with $X_n$ and contained in $4X_n +X_n \cdot 4X_n$. This yields a contradiction by taking any $n \geq m$ from $U$.
\end{proof}

Note that both in Theorem \ref{theorem: application of the main theorem} and Corollary \ref{corollary: application to finite approximate subrings} one cannot require that there is a ring commensurable with $X$ which contains $X$. As an example, take the finite approximate subring consisting of all linear polynomials inside the ring of polynomials $\mathbb{F}_p[t]$ over the finite field $\mathbb{F}_p$. It is also clear that Theorem \ref{theorem: application of the main theorem} would fail if we dropped the positive characteristic assumption. As an example, take the approximate subring $[-1,1]$ in the field of reals. The problem of classifying approximate subrings in the zero characteristic case remains open. But even in positive characteristic, although Theorem \ref{theorem: application of the main theorem} classifies up to additive commensurability all approximate subrings as subrings, having in mind that an additively  symmetric subset (of a given ring) additively commensurable with a subring need not be an approximate subring (see the last paragraph of Subsection \ref{subsection: approximate rings}), it remains open to fully classify all approximate subrings of rings of positive characteristic.

Now, we classify finite approximate subrings of rings without zero divisors.

\begin{theorem}\label{theorem: classification without zero divisors}
For every $K \in \mathbb{N}$ there exists $N(K) \in \mathbb{N}$ such that for every  finite $K$-approximate subring $X$ of a ring without zero divisors either $|X| <N(K)$ or $4X + X \cdot 4X$ is a subring which is additively $K^{11}$-commensurable with $X$.
\end{theorem}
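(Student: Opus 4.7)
The plan is to combine the universal locally compact model for approximate subrings established in Corollary~\ref{corollary: locally compact model exists} with a standard ultraproduct reduction, in the spirit of Corollary~\ref{corollary: application to finite approximate subrings}, and to extract the explicit $K^{11}$ bound through Pl\"unnecke--Ruzsa covering estimates. The ``no zero divisors'' hypothesis will be used crucially to force $4X + X \cdot 4X$ itself (not merely some subring contained in it) to be the desired subring.

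First, suppose for contradiction that no such $N(K)$ works; then there is a sequence $(X_n)_{n<\omega}$ of finite $K$-approximate subrings of rings $R_n := \langle X_n \rangle$ without zero divisors, with $|X_n| \to \infty$, such that for each $n$, $4X_n + X_n \cdot 4X_n$ either fails to be a subring or fails to be additively $K^{11}$-commensurable with $X_n$. In the language of rings augmented by a unary predicate $P$ interpreted as $X_n$ in $R_n$, fix a nonprincipal ultrafilter $\mathcal{U}$ on $\omega$ and form $M := \prod_n R_n/\mathcal{U}$. Since ``no zero divisors'' is first-order, $M$ is a domain. Set $X := P(M) = \prod_n X_n/\mathcal{U}$ and $R := \langle X \rangle \subseteq M$; then $X$ is a definable $K$-approximate subring of $M$, $R$ is an infinite domain, and by Corollary~\ref{corollary: locally compact model exists} the quotient map $h \colon R \to S := \bar R/\bar R^{00}_M$ is the universal definable locally compact model of $X$, with $h^{-1}[U] \subseteq 4X + X \cdot 4X$ for the open neighborhood $U := \{a/\bar R^{00}_M : a + \bar R^{00}_M \subseteq 4\bar X + \bar X \cdot 4\bar X\}$ of $0$.

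The main technical step, which I expect to be the chief obstacle, is to exploit the fact that $\bar R$ is a domain in order to show that $4X + X \cdot 4X$ is itself a definable subring of $R$ additively commensurable with $X$. Concretely, one seeks a compact open subring $V \subseteq S$ with $h^{-1}[V] = 4X + X \cdot 4X$, from which Lemma~\ref{lemma: criterion for definable subring} yields the desired conclusion. In the positive characteristic case of Theorem~\ref{theorem: application of the main theorem} the compact open subring came from Armacost's theorem for torsion locally compact abelian groups; here the role of torsion is played by the absence of zero divisors in $\bar R$. The key point is that $\bar R^{00}_M$ is a two-sided ideal inside a domain, so multiplication by any nonzero element is injective, providing enough rigidity to force the image of $4\bar X + \bar X \cdot 4\bar X$ in $S$ to close up under both $+$ and $\cdot$, yielding the sought compact open subring.

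Finally, the commensurability constant $K^{11}$ comes from an explicit Pl\"unnecke--Ruzsa covering calculation. Starting from $X + X \subseteq F + X$ and $X \cdot X \subseteq F + X$ with $|F| \leq K$, iterating the covering estimate shows $4X$ is covered by $K^3$ additive translates of $X$; tracking the covers of $X \cdot f$ for $f \in F$ along the lines of the proof of Fact~\ref{fact: by Lemma 5.5 of Bru} bounds $X \cdot 4X$ by a further explicit power of $K$; combining yields the stated $K^{11}$-covering of $4X + X \cdot 4X$ by additive translates of $X$. The reverse covering is immediate since $0 \in X$ gives $X \subseteq 4X \subseteq 4X + X \cdot 4X$. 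The resulting assertion ``$4X + X \cdot 4X$ is a subring additively $K^{11}$-commensurable with $X$'' is first-order in the expanded language and is witnessed by a specific finite tuple of parameters; hence, by {\L}o\'s's theorem, it holds in $R_n$ for a $\mathcal{U}$-large set of $n$, contradicting the choice of the $X_n$.
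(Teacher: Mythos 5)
Your ultraproduct setup and the covering computation are fine: indeed $4X \subseteq 3F+X$ and $X\cdot 4X \subseteq 4F+3F+X$, so $4X + X\cdot 4X \subseteq 11F + X$ is covered by at most $K^{11}$ additive translates of $X$; note this holds unconditionally for every $K$-approximate subring, so it need not even be part of the contradiction hypothesis. The problem is the heart of the theorem, namely that for $|X|$ large (equivalently, for the infinite ultraproduct $X$) the set $Y := 4X + X\cdot 4X$ is actually a subring: this is precisely the step you label ``the chief obstacle'' and then do not prove. Your proposed route --- find a compact open subring $V \subseteq S$ with $h^{-1}[V] = 4X + X\cdot 4X$ and quote Lemma \ref{lemma: criterion for definable subring} --- begs the question: $h^{-1}[V]$ is automatically a subring, so requiring it to equal $Y$ is essentially requiring $Y$ to be a subring, and the sentence about ``enough rigidity to force the image of $4\bar X + \bar X\cdot 4\bar X$ in $S$ to close up under $+$ and $\cdot$'' is not an argument. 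Moreover, nothing in your sketch shows that $S$ has any compact open subring: in Theorem \ref{theorem: application of the main theorem} this came from Armacost's theorem because $(S,+)$ was torsion, whereas here $S$ may have characteristic zero and you derive no topological information about $S$ from the absence of zero divisors, let alone that some compact open subring has preimage exactly $Y$.

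The paper's use of the domain hypothesis is more elementary and bypasses the structure of $S$ entirely. Since $X$ is infinite, $\bar R^{00}_M \neq \{0\}$, and by Corollary \ref{corollary: locally compact model exists} it is a two-sided ideal contained in $\bar Y := 4\bar X + \bar X\cdot 4\bar X$. Picking a nonzero $a \in \bar R^{00}_M$, the ideal property gives $a\bigl(\bar Y\cdot\bar Y + (\bar Y + \bar Y)\bigr) \subseteq \bar R^{00}_M \subseteq \bar Y$. Quantifying $a$ over the definable set $\bar Y$, this is first order, so it descends to $M$ and then, by {\L}o\'s, to some finite level: there is a nonzero $c \in R_n$ with $c\bigl(Y_n\cdot Y_n + (Y_n+Y_n)\bigr) \subseteq Y_n$ where $Y_n := 4X_n + X_n\cdot 4X_n$. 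If $Y_n$ is not a subring, then (being additively symmetric and containing $0$) $Y_n\cdot Y_n + (Y_n+Y_n)$ strictly contains $Y_n$, so multiplication by $c$ maps a strictly larger finite set into $Y_n$, hence is not injective, and $c$ is a zero divisor --- contradiction. So the injectivity-of-multiplication idea you mention is the right germ, but it must be applied at the finite level via this counting argument (or replaced by some other genuine proof that $\bar Y$ is closed under $+$ and $\cdot$); as written, your third paragraph leaves the essential step unproved.
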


\begin{proof}


Suppose for a contradiction that for every $n \in \omega$ there is a finite $K$-approximate subring $X_n$ of size $\geq n$ of a ring without zero divisors such that $4X_n + X_n \cdot 4X_n$ is not a subring. Let $R_n:= \langle X_n \rangle$, all considered in the language of rings expanded by an additional predicate symbol $P$ interpreted in $R_n$ as $P(R_n):=X_n$. Take a non-principal ultrafilter $\mathcal{U}$ on $\omega$, and let $M := \prod R_n/\mathcal{U}$. Then $M$ is a ring without zero divisors.   Let $X:= P(M)=\prod X_n/\mathcal{U}$. It is an infinite, definable $K$-approximate subring of $M$. Let $R :=\langle X \rangle$. Pass to a monster model $\C \succ M$. By Corollary \ref{corollary: locally compact model exists}, $\bar R^{00}_M$ is an ideal of $\bar R$ contained in $\bar Y:=4\bar X + \bar X \cdot 4\bar X$. Since $X$ is infinite, $\bar X$ is of unbounded cardinality, and so $\bar R^{00}_M \ne \{0\}$. Pick any non-zero $a \in \bar R^{00}_M$. Then $a(\bar Y \cdot \bar Y + (\bar Y + \bar Y)) \subseteq \bar R^{00}_M \subseteq \bar Y$. So there is a non-zero $b \in R$ such that $b(Y\cdot Y +(Y+Y)) \subseteq Y$, where $Y:=4X + X \cdot 4X$. Then there is $n \in \mathbb{N}$ for which there exists a non-zero $c \in R_n$ such that for $Y_n:= 4X_n + X_n \cdot 4X_n$ we have $c(Y_n \cdot Y_n +(Y_n+Y_n)) \subseteq Y_n$. However, since $Y_n$ is additively symmetric and not a subring, we have that $Y_n \cdot Y_n +(Y_n+Y_n)$ is a proper superset of $Y_n$. Using finiteness of $Y_n$, we conclude that $c$ is a zero divisor, a contradiction. 

The fact that $K^{11}$ additive translates of $X$ cover $4X + X \cdot 4X$ is an easy computation.
\end{proof}

In the proof, we have that $a \in \bar Y$, so we can choose $b \in Y$ and $c \in Y_n$. Therefore, the assumption of the last theorem can be weakened to requiring that there are no zero divisors in $Y:=4X + X \cdot 4X$ witnessed by an element from $2(Y \cdot Y + (Y +Y))$.

\section{The 0-components}\label{section: application}

In the case of a 0-definable (in the monster model $\C$) ring $\bar R$, we have Fact \ref{fact: ideal component} for $\RR^0_A$, and we know by Corollary 2.10 of \cite{KrRz} that $\RR^{00}_A =\RR^{0}_A$ whenever $\RR$ is unital or of positive characteristic. In particular, in those two cases, $\RR/\RR^{00}_A=\RR/\RR^0_A$ is a profinite ring. In this subsection, we explain that all of this drastically fails for approximate subrings.

A natural counterpart of $\RR^0_A$ for approximate subrings is as follows. From now on, let $X$ be a 0-definable (in $M$) approximate subring, $R:=\langle X \rangle$, $\bar R = \langle \bar X \rangle$, and let  $A \subseteq \C$ be a small set of parameters.

\begin{definition}
${\bar R}^0_{A,ideal}$ is the intersection of all $A$-definable two-sided ideals of $\bar R$ of countable (equivalently, bounded) index.
${\bar R}^0_{A,ring}$ is the intersection of all $A$-definable subrings of $\bar R$ of countable index.
\end{definition}

The existence of ${\bar R}^0_{A,ideal}$ [resp. ${\bar R}^0_{A,ring}$] is clearly equivalent to the existence of some $A$-definable two-sided ideal [resp. subring] of countable index (equivalently, commensurable with $\bar X$).
We will see in the examples below that it may happen that ${\bar R}^0_{A,ring}$ does not exist as well as that it exists but ${\bar R}^0_{A,ideal}$ does not.  Even $(\RR,+)^0_A$ need not exist. 
Moreover, even for unital $\RR$, $\RR/\RR^{00}_A$ need not be totally disconnected. If $\RR$ is of positive characteristic, then $\RR/\RR^{00}_A$ is totally disconnected  
but need not have a basis of neighborhoods of $0$ consisting of open ideals.
Let us go to some details.

Note that ``$\RR^0_{A,ideal}$ exists'' implies ``$\RR^0_{A,ring}$ exists'' implies ``$(\RR,+)^0_A$ exists''.

\begin{example}
Let $M := (\mathbb R,+,\cdot,0,1)$ and $X:= [-1,1]$ which is clearly a 0-definable approximate subring (and here $R=\R$). Then $(\RR,+)^0_M$ does not exist. Also, $\RR^{00}_M = (\RR,+)^{00}_M = \bigcap_{n \in \omega} \bar I_n =:\mu$, where  $I_n:=[-\frac{1}{n},\frac{1}{n}]$ and $\bar I_n$ is the interpretation of $I_n$ in $\C$  (i.e. $\mu$ is the subgroup of the infinitesimals of $\RR$), and $\RR/\RR^{00}_M$ is isomorphic to $\R$ as a topological ring, so it is not totally disconnected.
\end{example}

\begin{proof}
By compactness, the fact that $(\RR,+)^0_M$ does not exist is equivalent to the fact that there is no definable subgroup of $(R,+)$ contained in some $nX =[-n,n]$ and whose finitely many additive translates cover $nX$. And the right hand side clearly holds, as the only subgroup of $(R,+)$ contained in some $[-n,n]$ is $\{0\}$. 

For the second part, the analysis of Example 3.2 of \cite{GJK} applies with minor adjustments (note that still we have a well-defined standard part map $\st \colon \RR \to \R$ and we show that $\ker(\st) = \RR^{00}_M$).
\end{proof}

If one prefers to work in the abstract context, one can equip the reals with the full structure (where all subsets of all finite Cartesian powers are added as predicates on $M$). Then, by the same reason as above, $(\RR,+)^0_M$ does not exist. Regarding the second part, we get a continuous epimorphism from $\RR/\RR^{00}_M$ to $\R$ which implies that $\RR/\RR^{00}_M$ is not totally disconnected.

\begin{example}\label{example: Laurent series}
Let $M:=\mathbb{F}_p((t))$ be the field of formal Laurent series (over the finite field $\mathbb{F}_p$) equipped with the full structure. 
Let $X$ be the subset (in fact, additive subgroup) consisting of the series of the form $\sum_{i=-1}^\infty a_i t^i$. This is clearly a 0-definable approximate subring, and $R:=\langle X \rangle =\mathbb{F}_p((t))$. Then $\RR^0_{M,ideal}$ does not exist, while $\RR^{0}_{M, ring}$ does exist. The ring $\RR/\RR^{00}_M$ is totally disconnected but does not have a basis of neighborhoods of  0 consisting of open ideals.
\end{example}

\begin{proof}
The existence of $\RR^0_{M,ideal}$ is equivalent to the existence of a definable ideal of $R$ contained in some $X_m$ and whose finitely many additive translates cover $X_m$. But $R$ is a field, so it does not have such an ideal. Thus, $\RR^0_{M,ideal}$ does not exist. On the other hand, since the set $\mathbb{F}_p[[t]]$ of all formal power series is a definable subring of $R$ whose $p$ translates cover $X$, we get that  $\RR^{0}_{M, ring}$ exists. 
Since the additive group of $\RR/\RR^{00}_M$ is a torsion, locally compact abelian group, we get that it is totally disconnected (e.g. see \cite[Theorem 3.5]{Arm}). Let $\st \colon \RR \to R$ be the standard part map (where $R$ is equipped with the usual valuation topology which makes it a locally compact field). Then $\ker(\st) = \bigcap_{n \in \omega} \bar I_n$, where $I_n$ is the set of formal power series of the form $\sum_{i=n}^\infty a_i t^i$. We have that $\RR/\ker(\st)$ is topologically isomorphic to $R$, and the obvious map $\RR/\RR^{00}_M \to \RR/\ker(\st)$ is a continuous epimorphism. Thus, if $\RR/\RR^{00}_M $ had a basis of neighborhoods of  0 consisting of open ideals, then the images of these ideals would form a basis at 0 in $R$ consisting of open ideals, which is a non-sense as $R$ is a non-discrete field.
\end{proof}

Theorem 1.1 of \cite{KrRz} tells us that if $\RR$ is definable and $H$ is an $A$-definable subgroup of $(\RR,+)$ of finite index, then $H + \RR H$ contains an $A$-definable two-sided ideal of finite index. From that it is deduced that $(\RR,+)^0_A + \RR (\RR,+)^0_A = \RR^0_A$ (see Theorem 1.5 and Proposition 3.4(2) of \cite{KrRz}). In our general context of $\bar R$ generated by $\bar X$, both observations fail: Example \ref{example: Laurent series} is a counter-example to both statements (where in the first statement we replace ``finite index'' by ``countable index'', and in the second one we assume that $(\RR,+)^0_A$ exists) . However, the following remains unclear.

\begin{question}
Suppose $(\RR,+)^0_A$ exists. Is it true that $(\RR,+)^0_A + \RR (\RR,+)^0_A$ is a subgroup of $(\RR,+)$?
\end{question}

Recall that Corollary 8.11 of \cite{KrRz} yields an example of a definable, commutative, unital ring $\RR$ and a 0-type-definable subgroup $H$ of $(\RR,+)$ which is an intersection of a countable descending sequence of definable subgroups of finite index (so $(\RR,+)^0_\emptyset \leq H$), but $\RR H$ does not additively generate a subgroup in finitely many steps; in particular, $H + \RR H$ is not a subgroup.

\begin{proposition}
Assume that $\RR$ is of positive characteristic. Then $(\RR,+)^0_A$ exists and coincides with $(\RR,+)^{00}_A$. 
Thus,  $(\RR,+)^0_A + \RR (\RR,+)^0_A = \RR^{000}_A$ (is a subgroup of $(\RR,+)$), and if also $R \subseteq \dcl(A)$, then $(\RR,+)^0_A + \RR (\RR,+)^0_A = \RR^{00}_A$.
\end{proposition}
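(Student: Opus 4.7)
The plan is to deduce the proposition from Fact~\ref{fact: main consequence of MaWa and Mas}, Theorem~\ref{theorem: main theorem}, and the structure theory of torsion locally compact abelian groups. Since $(\RR,+)$ is abelian and hence amenable, Fact~\ref{fact: main consequence of MaWa and Mas} guarantees that $(\RR,+)^{00}_A$ exists and sits inside some $n\bar X$. The quotient $Q := (\RR,+)/(\RR,+)^{00}_A$ equipped with the logic topology is a locally compact abelian group, and the positive characteristic of $\RR$ forces $Q$ to be torsion. By \cite[Theorem 3.5]{Arm} (already invoked in the proof of Theorem~\ref{theorem: application of the main theorem}), $Q$ then has a basis of neighborhoods of $0$ consisting of compact open subgroups, so $Q$ is Hausdorff and totally disconnected and the intersection of all its compact open subgroups is $\{0\}$.

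The inclusion $(\RR,+)^{00}_A \subseteq (\RR,+)^0_A$ is automatic once the latter exists, since every $A$-definable bounded-index subgroup of $(\RR,+)$ is $A$-type-definable of bounded index and hence contains $(\RR,+)^{00}_A$. For the reverse inclusion together with existence, it suffices to produce, for each $a \in \RR \setminus (\RR,+)^{00}_A$, an $A$-definable bounded-index subgroup of $(\RR,+)$ avoiding $a$. The plan is to pull back a compact open subgroup of $Q$: the $\Aut(\C/A)$-orbit $[a]$ is contained in some $n_0\bar X$ and disjoint from $(\RR,+)^{00}_A$ by $A$-invariance. A compactness argument then produces a compact open subgroup $U \leq Q$ with $\pi^{-1}[U] \cap [a] = \emptyset$, because otherwise the type-definable sets $\pi^{-1}[U] \cap [a]$ (as $U$ ranges over compact open subgroups of $Q$) would have the finite intersection property inside $n_0\bar X$ and, by saturation, yield a point in $(\RR,+)^{00}_A \cap [a]$, a contradiction. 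Intersecting $U$ with its $\Aut(\C/A)$-conjugates gives an $A$-invariant closed subgroup $U_A$ of $Q$ still avoiding $\pi(a)$, and its preimage $\pi^{-1}[U_A]$ is an $A$-type-definable bounded-index subgroup of $(\RR,+)$ contained in some $m\bar X$. A saturation-separation argument analogous to the one used for definable rings in \cite[Corollary 2.10]{KrRz} then upgrades this to a genuine $A$-definable subgroup $H$ with $a \notin H$, proving $(\RR,+)^0_A = (\RR,+)^{00}_A$.

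For the \emph{Thus} part, substituting the equality just obtained into Theorem~\ref{theorem: main theorem} yields
\[
(\RR,+)^0_A + \RR \cdot (\RR,+)^0_A = (\RR,+)^{00}_A + \RR \cdot (\RR,+)^{00}_A = \RR^{000}_A,
\]
which is in particular a subgroup of $(\RR,+)$. Under the further hypothesis $R \subseteq \dcl(A)$, the \emph{moreover} clause of Theorem~\ref{theorem: main theorem} identifies $\RR^{000}_A$ with $\RR^{00}_A$, completing the proof.

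The main technical obstacle is the final upgrade from $A$-type-definability of $\pi^{-1}[U_A]$ to genuine $A$-definability: because $U_A$ need not be open in $Q$ (arbitrary intersections of opens fail to be open in general), the complement of $\pi^{-1}[U_A]$ in $m\bar X$ is a priori only a union of type-definable sets rather than manifestly $A$-type-definable, so that a clean saturation-separation does not immediately apply. Handling this will require exploiting the specific form of the logic topology on $Q$ together with $A$-invariance of $U_A$, along the lines of the argument for \cite[Corollary 2.10]{KrRz}.
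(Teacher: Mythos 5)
Your overall strategy is the paper's: both arguments rest on the same two pillars, namely Armacost's theorem giving $Q:=\RR/(\RR,+)^{00}_A$ a neighborhood basis at $0$ consisting of compact open subgroups, and Theorem \ref{theorem: main theorem} for the ``Thus'' part (which you handle correctly). The difference, and the problem, lies exactly at the step you yourself flag: you first replace a compact open subgroup $U\leq Q$ by the intersection $U_A$ of its $\Aut(\C/A)$-conjugates and only then pull back. This cannot be repaired as stated. $U_A$ is merely closed, and the preimage of a closed non-open subgroup of $Q$ has no reason to be definable --- indeed $(\RR,+)^{00}_A=\pi^{-1}[\{0\}]$ is itself such a preimage and is in general not definable --- so all you retain is $A$-type-definability of $\pi^{-1}[U_A]$, which is no more information than $(\RR,+)^{00}_A$ already carries. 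The passage from type-definable to definable is precisely where clopenness must be used, and intersecting conjugates inside $Q$ destroys it; deferring this as a ``technical obstacle'' leaves the essential content of the proposition unproved.

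The fix is to reverse the order of the two operations, which is essentially what the paper does. Fix $m$ with $(\RR,+)^{00}_A\subseteq\bar X_m$, let $U:=\{a/(\RR,+)^{00}_A : a+(\RR,+)^{00}_A\subseteq\bar X_m\}$, and take a compact open (hence clopen) subgroup $H\subseteq U$. Then $\pi^{-1}[H]\subseteq\bar X_m$, and both $\pi^{-1}[H]$ and its complement in $\bar X_m$, namely $\pi^{-1}[Q\setminus H]\cap\bar X_m$, are preimages of closed sets intersected with $\bar X_m$, hence type-definable; by saturation, $\pi^{-1}[H]$ is therefore a definable subgroup of $(\RR,+)$ of bounded index. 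Only now deal with parameters: every $\Aut(\C/A)$-conjugate of this definable subgroup contains the $A$-invariant group $(\RR,+)^{00}_A$ and is a union of its cosets, so its orbit is bounded, hence finite (a definable set with boundedly many $A$-conjugates has finitely many), and the intersection of the finitely many conjugates is an $A$-definable bounded-index subgroup contained in $\pi^{-1}[H]$. Letting $H$ run over the basis, these $A$-definable subgroups intersect to $(\RR,+)^{00}_A$, which yields both the existence of $(\RR,+)^0_A$ and the equality $(\RR,+)^0_A=(\RR,+)^{00}_A$; your deduction of the ``Thus'' clause from Theorem \ref{theorem: main theorem} then goes through verbatim. (The paper works directly with the clopen basis and leaves the reduction to parameters from $A$ implicit; your compactness argument producing a single $U$ avoiding the orbit of a given $a\notin(\RR,+)^{00}_A$ is correct but becomes unnecessary once one argues as above.)
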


\begin{proof}
$\RR/(\RR,+)^{00}_A$ is a torsion, locally compact abelian group, and as such it has a basis $\{H_i\}_{i \in I}$ of neighborhoods of $0$ consisting of open (so clopen) subgroups (see \cite[Theorem 3.5]{Arm}). Take $m$ such that $(\RR,+)^{00}_A \subseteq \bar X_m$ (e.g. $m=3$ works, but it does not matter here). Then $U:=\{a/(\RR,+)^{00}_A: a+ (\RR,+)^{00}_A \subseteq \bar X_m\}$ is an open neighborhood of $0$ in $\RR/(\RR,+)^{00}_A$, and without loss of generality we can assume that each $H_i$ is contained in $U$. Let $\pi \colon  \RR \to \RR/(\RR,+)^{00}_A$ be the quotient map. Then both $\pi^{-1}[H_i] \subseteq \bar X_m$ and its complement in $\bar X_m$ are type-definable and so definable sets. Hence, $\pi^{-1}[H_i] \subseteq \bar X_m$ are definable subgroup of $(\RR,+)$ of bounded index (for $i \in I$). And clearly $(\RR,+)^{00}_A$ is the intersection of all of them, so $(\RR,+)^{00}_A = (\RR,+)^0_A$.  Thus, the second part of the proposition follows by Theorem  \ref{theorem: main theorem}. 
\end{proof}

\section{Conflict of interest and data availability statement}
The author states that there is no conflict of interest. Data sharing not applicable to this article as no datasets were generated or analysed during the current study.

\printbibliography
\nocite{*}

	\end{document}